\newcommand{\R}{{\mathbb R}}
\newcommand{\Z}{{\mathbb Z}}
\newcommand{\ve}{\varepsilon}
\newcommand{\mB}{{\mathcal B}} 
\newcommand{\mC}{{\mathcal C}} 
\newcommand{\cB}{{\mathscr B}} 
\newcommand{\cC}{{\mathscr C}} 
\newcommand{\sD}{{\mathscr D}}
\newcommand{\ld}{\lambda}
\newcommand{\be}{\beta}
\newcommand{\D}{\nabla}
\newcommand{\La}{\Delta} 
\newcommand{\Div}{\operatorname{div}} 
\newcommand{\q}{\quad}
\newcommand{\qq}{\qquad}
\newcommand{\eq}{\begin{equation}}
\newcommand{\eeq}{\end{equation}}
\newcommand{\1}{\partial}
\newtheorem{thm}{Theorem}[section]
\newtheorem{lemma}[thm]{Lemma}
\newtheorem{cor}[thm]{Corollary}
\newtheorem{remark}[thm]{Remark}
\newtheorem{Claim}[thm]{Claim}
\newtheorem{prop}[thm]{Proposition}
\newtheorem*{notation}{Notation}
\numberwithin{equation}{section}
\numberwithin{thm}{section}
\begin{document}
\title[Vanishing  behavior]{Vanishing time behavior of   solutions to the fast diffusion equation}
 
\author[Kin Ming Hui]{Kin Ming Hui}
\address{Kin Ming Hui:
Institute of Mathematics, Academia Sinica, Taipei, Taiwan, R.O.C.}
\email{kmhui@gate.sinica.edu.tw}

\author{Soojung Kim}
\address{Soojung Kim:
School of Mathematics, Korea Institute for Advanced Study,  Seoul 02455,    Republic of Korea}
\email{soojung26@gmail.com; soojung26@kias.re.kr}

\date{\today}
\date{Nov 11, 2018}
\keywords{existence, fast diffusion equation, profile near extinction time, second order asymptotics, self-similar solution}
\subjclass[2010]{Primary 35B40, 35K65 Secondary 35J70}
 
\begin{abstract}
Let $n\geq 3$, $0< m<\frac{n-2}{n}$ and $T>0$. We construct positive solutions to the fast diffusion equation $u_t=\Delta u^m$ in $\mathbb{R}^n\times(0,T)$, which vanish at time $T$.  By introducing a scaling parameter $\beta$ inspired by \cite{DKS}, we study the second-order asymptotics of the self-similar solutions   associated with  $\beta$ at spatial infinity.  We also investigate the asymptotic behavior of  the solutions to the fast diffusion equation  near the vanishing time $T$, provided that  the initial value  of the solution is close to the initial value of some self-similar solution  and satisfies some proper  decay  condition at infinity. Depending on the range of the parameter $\beta$,  we prove that the rescaled solution   converges either to a  self-similar profile  or to  zero as $t\nearrow T$. The former implies asymptotic stabilization towards a self-similar solution, and the latter is a new vanishing phenomenon even for the case $n\ge3$ and $m=\frac{n-2}{n+2}\,$ which corresponds to the Yamabe flow on $\mathbb{R}^n$ with metric $g=u^{\frac{4}{n+2}}dx^2$. 
\end{abstract}

\maketitle

\tableofcontents

\section{Introduction}

The equation 
\begin{equation}\label{eq-fde}
\left\{\begin{aligned}
u_t&=\La u^m\qquad\mbox{ in }\R^n\times(0,T)\\
u(x,0)&=u_0(x)\quad\,\,\,\,\mbox{in }\R^n
\end{aligned}
\right.
\end{equation}  
appears in many physical models. When $m=1$, \eqref{eq-fde} is the classical heat equation.  When $m>1$, \eqref{eq-fde} is the porous medium equation which models the flow of gases or liquid through porous media. When $0<m<1$, \eqref{eq-fde} is the fast diffusion equation appearing in plasma physics.   The fast diffusion equation  also arises  in the study of  Yamabe flow in geometry.  
Let $g=u^{\frac{4}{n+2}}dx^2$  be   a conformally flat metric on $\R^n$ ($n\ge 3$),  which satisfies the Yamabe flow
\begin{equation}\label{eq-Yamabe}
\frac{\partial g}{\partial t}=-Rg \qquad\mbox{ for} \,\,0<t<T. 
\end{equation}
Here $R$ is the scalar curvature with respect to  the metric $g$.  Since 
$$
R=-\frac{4(n-1)}{n-2} \, u^{-1} \La u^{\frac{n-2}{n+2}}
$$    (see \cite{SY}), 
by \eqref{eq-Yamabe} $u$ satisfies 
\begin{equation*}
u_t=\frac{n-1}{m}\Delta u^m\quad\mbox{ in }\,\,\R^n\times (0,T)
\end{equation*}
with $m=\frac{n-2}{n+2}$;   \cite{DKS,dPS,Y}. After a rescaling, this  is equivalent to 
\begin{equation}\label{fast-diff-eqn}
u_t=\Delta u^m\quad\mbox{ in } \,\,\R^n\times (0,T).
\end{equation}

It is   well-known that there is a big difference in the behavior of the solutions to \eqref{eq-fde} for the cases $m>1$, $0<m<\frac{(n-2)_+}{n}$ and $\frac{(n-2)_+}{n}<m<1$. When $m>1$ and the initial value $u_0$ is non-negative and has compact support,   the solution $u$ of \eqref{eq-fde} will have compact support for all time of the existence of solution \cite{A}. On the other hand, as proved by M.A.~Herrero and M.~Pierre \cite{HP} when   $(n-2)_+/n< m<1$, \eqref{eq-fde}
has a unique global positive smooth solution in $\R^n\times(0,\infty)$  for any  initial value $0\lvertneqq u_0\in L^1_{loc}(\R^n)$. 

In the subcritical case when  $n\geq3$ and $ 0<m<\frac{n-2}{n}$,  the  Barenblatt solution (cf. \cite{DS})
\begin{equation}\label{eq-barenblatt}
\mB_{k}(x,t)=(T-t)^{\alpha}\left(\frac{C_* }{\,k^2+ \left|(T-t)^{\beta_1}x\,\right|^2}\,\right)^{\frac{1}{1-m}}\quad  \forall (x,t)\in \R^n \times (0,T)
\end{equation}
with  $k>0$ and $T>0$,  
is a  self-similar solution of \eqref{eq-fde} which vanishes identically at time $T$.  Here 
\begin{equation}\label{eq-def-beta1}
\beta_1=\beta_1(m)=\frac{1}{n-2-nm},\qquad \alpha=\frac{2\beta_1+1}{1-m},
\end{equation}
and 
\begin{equation}\label{c-star-defn}
C_*=C_*(m)=\frac{2m(n-2-nm)}{1-m}.  
\end{equation}  
Putting $k=0$ in \eqref{eq-barenblatt},  we obtain a singular solution   
\begin{equation}\label{eq-barenblatt-sing}
\mC(x,t)=\left(\frac{C_*(T-t)}{|x|^2}\right)^{\frac{1}{1-m}}\qquad \forall (x,t)\in (\R^n\setminus\{0\})\times (0,T)
\end{equation}
of  \eqref{fast-diff-eqn}.

When $n\ge 3$ and $0<m<\frac{n-2}{n}$, it is shown  that there exist initial values such that the corresponding positive smooth solutions of \eqref{eq-fde} vanish at time $T$  in  \cite{DS,DKS,GP}, while  there exist initial values such that \eqref{eq-fde} has unique global positive smooth solutions in $\R^n\times(0,\infty)$ in \cite{Hsu2}. In this   case when  $n\ge 3$ and $0<m<\frac{n-2}{n}$, the asymptotic extinction behaviors  of the vanishing solutions to  \eqref{eq-fde}  with initial values satisfying some   decay condition
 have been  studied   by V.A.~Galaktionov, L.A.~Peletier  \cite{GP}, M. del Pino, M.~S\'aez  \cite{dPS}, P.~Daskalopoulos, J.~King, N.~Sesum, \cite{DS,DKS},  K.M.~Hui \cite{Hui},  A.~Blanchet, M.~Bonforte,  J.~Dolbeault, G.~Grillo, J.L.~Vazquez \cite{BBDGV,BDGV}, M. Fila, J.L.~Vazquez, M.~Winkler, E.~Yanagida \cite{FKW,FVW, FVWY, FW}, etc.

Asymptotic stabilization to   Barenblatt solutions when   $n\geq3$ and $ 0<m<\frac{n-2}{n}$   was investigated    in   \cite{DS}, \cite{BBDGV} and \cite{FW}.     It was proved that  some rescaled function of the solution to \eqref{eq-fde} converges to  the  rescaled Barenblatt solution    as $t$ approaches the extinction time $T$, provided that the initial value of the solution  is close to the initial value of some  Barenblatt solution.


Let $n\geq 3$,  $0<m< \frac{n-2}{n}$  and $\alpha$, $\beta$ satisfy
\begin{equation}\label{eq-beta-exist}
 \beta\, \geq \, \beta_e(m):=\frac{m}{n-2-nm}, \qquad\alpha = \frac{2\beta+1}{1-m}. 
\end{equation}
By Theorem 1.1 of \cite{Hsu1},  for any $\lambda>0$, there exists a unique radially symmetric solution $f_{\lambda}$  to the elliptic  problem   
\begin{equation}\label{eq-ellip}
\left\{\begin{aligned}
&\La f^m+\alpha f+\beta  y \cdot\D f=0,\quad  f>0\qquad\hbox{in \,\,$\R^n$}\\
&f(0)=\lambda^{\frac{2}{1-m}}.
\end{aligned}\right.
\end{equation} 
Then
\begin{equation}\label{eq-U-ld}
U_\lambda(x,t):=(T-t)^\alpha f_{\lambda}\left((T-t)^\beta x\right)\qquad\forall (x,t)\in\R^n\times(0,T)
\end{equation}
 is a self-similar solution to \eqref{fast-diff-eqn}.
In particular, if $\beta=\beta_1(m)=\frac{1}{n-2-nm}$, then $\alpha=n\beta$ and for any $\lambda>0$,
\begin{equation}\label{eq-barenblatt-scaled}
f_{\lambda}(y)= \left(\frac{C_* }{k^2+|y|^2}\right)^{\frac{1}{1-m}}=:\cB_{k}(y)\qquad\hbox{ with }\,\,k=\sqrt{C_*}/\lambda,
\end{equation}  
and then  the corresponding self-similar solution $U_\lambda$  coincides with the Barenblatt solution $\mB_{k}$  in \eqref{eq-barenblatt} with  $k=\sqrt{C_*}/\lambda$.
Similarly, the singular Barenblatt solution $\mC(x,t)$ in \eqref{eq-barenblatt-sing} satisfies 
\begin{equation*}
\mC(x,t)=(T-t)^\alpha \cC\left((T-t)^\beta x\right)\qquad\forall (x,t)\in(\R^n\setminus\{0\})\times(0,T)
\end{equation*}
with
\begin{equation}\label{eq-def-tildeC}
\cC(y):= \left(\frac{C_*}{|y|^{2}}\right)^{\frac{1}{1-m}} \quad\forall y\in\R^n\setminus\{0\}
\end{equation}
for any  $\beta>0$, $\alpha = \frac{2\beta+1}{1-m}$ and $0<m<1$.

In \cite{DKS}, the authors studied    vanishing  solutions  to \eqref{eq-fde} including self-similar solutions in \eqref{eq-U-ld} for the case $n\ge 3$ and  $m=\frac{n-2}{n+2}$ which concerns    the Yamabe flow  of a conformally flat metric $g=u^{\frac{4}{n+2}}dx^2$ on $\R^n$. So the singularity formation of   conformally flat solutions to the   Yamabe flow at a singular  time can be described  by     the  extinction  profiles of  solutions to the fast diffusion equation   \eqref{eq-fde}.   

In this paper we will extend    the results of P. Daskalopoulos, K.M.~Hui, J. King and N. Sesum in  \cite{DS,DKS,Hui}, and investigate various  asymptotic  behavior of  solutions to  \eqref{eq-fde}  near the extinction time, provided that $n\geq 3$,    $0<m<\frac{n-2}{n}$, and the initial value  $u_0$ is   close to  the initial value  of some self-similar solution  $U_{\lambda}$  with 
\begin{equation*}
u_0(x)=\left(\frac{C_*}{|x|^2}\right)^{\frac{1}{1-m}}\big(1+o(1)\big)\qquad \hbox{as $|x|\to\infty.$}
\end{equation*}
Unless stated otherwise, we will now let $n\ge 3$, $0<m<\frac{n-2}{n}$ and $\alpha$, $\beta$ satisfy \eqref{eq-beta-exist} throughout the paper.
In   light of \eqref{eq-U-ld}, for any solution $u$ to \eqref{eq-fde} we define  the rescaled function $\tilde u$  of $u$ as
\begin{equation}\label{u-tilde-defn}
\tilde u(y,\tau):=(T-t)^{-\alpha}u\left((T-t)^{-\beta}y, T(1-e^{-\tau})\right)\qquad \forall (y,\tau)\in\R^n\times[0,\infty)
\end{equation}
with new variables
$$
y= (T-t)^{\beta}x\qquad \hbox{and}\qquad \tau=-\log \{(T-t)/ T\}.
$$
Then     $\tilde u$ satisfies 
\begin{equation}\label{eq-fde-rescaled}
\tilde u_\tau=\La \tilde u^m+\alpha\tilde u+\beta y\cdot\nabla \tilde u \qquad \hbox{in \,\,$\R^n\times (0,\infty)$.}\\
\end{equation} 
The asymptotic analysis of the solution   $u$  near the extinction time $T>0$ then   is  equivalent to understand  the long-time asymptotics  of the rescaled solution  $\tilde u$ as $\tau\to\infty$.   Note that the solution $f_\lambda$ of \eqref{eq-ellip} is a stationary solution to \eqref{eq-fde-rescaled} for any $\lambda>0$. 

In the first part of this paper, we will establish the second-order asymptotics of $f_{\lambda}(r)$ as $r=|y|\to\infty$ and study the comparison properties of $f_{\lambda}$.  By  Theorem 1.1 of \cite{Hsu3}, $f_{\lambda}$    satisfies
\begin{equation}\label{eq-1st-asymp-ellip}
\lim_{r\to\infty} r^{2}f_{\lambda}^{1-m}(r)=C_*>0
\end{equation}
where $r=|y|$ and $C_*$ is given by    \eqref{c-star-defn}. As  observed in  Subsection \ref{subsec-2nd-asymp},  in light of    \eqref{eq-1st-asymp-ellip}, the second-order  asymptotics of  $f_{\lambda}$ can be deduced from the study of the   linearization problem  of      \eqref{eq-ellip}  around the function $\cC(y)$  given by  \eqref{eq-def-tildeC}, 
which  leads to        the    characteristic equation 
\begin{equation}\label{eq-char}
\gamma^2- \frac{A_0(m,\beta)}{1-m}\, \gamma+\frac{2(n-2-nm)}{1-m}=0 .
\end{equation}
Here
\begin{equation}\label{eq-def-A_0(be)}
A_0(m,\beta)=n-2-(n+2)m+2\beta(n-2-nm).
\end{equation}
Let
\begin{equation*}
\beta_2=\beta_2(m)=\sqrt{\frac{2(1-m)}{n-2-nm}}+ \frac{(n+2)m-(n-2)}{2(n-2-nm)}
\end{equation*}
and
\begin{equation}\label{eq-def-beta0}
\begin{aligned}
\beta_0=\beta_0(m)&=\max\left(\beta_2(m),\beta_e(m)\right). 
\end{aligned}
\end{equation} 
Note that
\begin{equation}\label{A0>0}
A_0(m,\beta)\geq A_0(m,\beta_2(m))=\sqrt{8(n-2-nm)(1-m)}>0\qquad\forall \beta\geq \beta_0(m).
\end{equation}
Hence if  $\beta\geq \beta_0(m)$, the two positive roots $\gamma_2=\gamma_2(m,\beta)\geq \gamma_1=\gamma_1(m,\beta)>0$ of \eqref{eq-char} are   given by
 \begin{equation}\label{eq-roots-gamma-intro}
 \gamma_j(m,\beta)= \frac{1}{2(1-m)}\left\{\,A_0(m,\beta)+(-1)^j\sqrt{A_0(m,\beta)^2-8(n-2-nm)(1-m)}\right\}\,\,\,\forall j=1,2. 
\end{equation}  
Under the assumption that  $\beta\geq \beta_0(m) $, the positivity of  the  two   roots $\gamma_1$,  $\gamma_2$  implies a    non-oscillatory behavior of the second-order term in the  asymptotic expansion of $f_{\lambda}$ at $r=\infty$. Unless stated otherwise, we will restrict ourselves to the case $\beta>\beta_0(m)$ with the two positive roots $\gamma_2> \gamma_1 >0$ of \eqref{eq-char} throughout the paper.    We will also assume that  $\beta_1(m)$, 
 $\beta_e(m)$ and $\beta_0(m)$    are given by \eqref{eq-def-beta1}, \eqref{eq-beta-exist} and \eqref{eq-def-beta0}, respectively. 
 
 \medskip
 Now we are ready to state our first   theorem. 

\begin{thm}[Second-order asymptotics of self-similar profile  at infinity]\label{thm-self-sol-2nd-asymp}
Let  $n\ge3$, $ 0<m<\frac{n-2}{n}$, $\beta> \beta_0(m)$ and $\alpha$ be given by \eqref{eq-beta-exist}. For any $\lambda>0$, let $f_{\lambda}$ be the unique radially symmetric solution of \eqref{eq-ellip}. Then the following holds.

\begin{enumerate}[(a)]
\item    
If 
 \begin{equation} \label{eq-cond-mono}\tag{C1}
 \begin{aligned}
 \mbox{either}&\hbox{ (i) \,$n\geq 3$, \,$0< m<\frac{n-2}{n}$\, and\, $\beta>\beta_1(m)$ }\\
 \hbox{or}\quad  &\hbox{ (ii) \,$n> 4$, \, $0<m<\frac{n-4}{n-2}$\, and\,   $\beta_0(m)<\beta\leq\beta_1(m)$} 
 \end{aligned}
 \end{equation}  
holds, then there exists a constant $B>0$ such that   for any $\lambda>0$,
$$
f_{\lambda}(r)=\left(\frac{C_*}{ r^2}\right)^{ \frac{1}{1-m}}\Big\{\,1- B_\lambda\, r^{-\gamma}+o (r^{-\gamma})\,\Big\}\quad\mbox{ as \,} r\to\infty
$$
with $\gamma=\gamma_1(m,\beta)$ and  $B_\lambda=B \lambda^{-\gamma}$.
 
\item  
If 
\begin{equation} \label{eq-cond-non-mono}\tag{C2}
 \begin{aligned}
 \mbox{either}&\hbox{ (i) \,$3\leq n\leq 4$, \,$0<m<\frac{n-2}{n}$\, and\, $\beta_0(m)<\beta<\beta_1(m)$, }\\
 \hbox{or}\quad  &\hbox{ (ii) \,$n >4$, \, $\frac{n-4}{n-2} < m<\frac{n-2}{n}$\, and \,  $\beta_0(m)<\beta<\beta_1(m)$} 
 \end{aligned}
 \end{equation} 
holds, then there exists a constant $B>0$ such that   for any $\lambda>0$,
\begin{equation*}
f_{\lambda}(r)= \left(\frac{C_*}{ r^2}\right)^{\frac{1}{1-m}}\Big\{\,1+ B_\lambda\, r^{-\gamma}+o (r^{-\gamma})\,\Big\}\quad\mbox{ as \,} r\to\infty
\end{equation*} 
with $\gamma=\gamma_1(m,\beta)$ and  $B_\lambda=B \lambda^{-\gamma}$.

\item 
If 
\begin{equation} \label{eq-cond-mono-beta1}\tag{C3}
 \begin{aligned}
\mbox{either} &\hbox{ (i) \,$3\leq n\leq 4$, \,$0<  m<\frac{n-2}{n}$\, and\, $\beta=\beta_1(m)$, }\\
 \hbox{or}\quad  &\hbox{ (ii) \,$n> 4$, \, $ \frac{n-4}{n-2} <m<\frac{n-2}{n}$\, and \,  $\beta=\beta_1(m)$} 
 \end{aligned}
 \end{equation}
holds,  then there exists a constant $B>0$ such that   for any $\lambda>0$,
$$
f_{\lambda}(r)=\left(\frac{C_*}{ r^2}\right)^{ \frac{1}{1-m}}\Big\{\,1- B_\lambda\, r^{-\gamma}+o (r^{-\gamma})\,\Big\}\quad\mbox{ as } r\to\infty
$$
with  $\gamma =\gamma_2(m,\beta_1(m))=2$ and $B_\lambda=B \lambda^{-2}$. 
 In this case, the self-similar profile $f_{\lambda} $ satisfies  \eqref{eq-barenblatt-scaled}. 
\end{enumerate}
\end{thm}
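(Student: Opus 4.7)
My plan begins by exploiting the scaling symmetry of \eqref{eq-ellip}. A direct computation shows that if $g$ solves \eqref{eq-ellip} with $g(0)=1$, then $\lambda^{2/(1-m)} g(\lambda y)$ also solves \eqref{eq-ellip}, now with value $\lambda^{2/(1-m)}$ at the origin. By uniqueness (Theorem~1.1 of \cite{Hsu1}) this forces $f_\lambda(y) = \lambda^{2/(1-m)} f_1(\lambda y)$. Consequently, once an expansion $f_1(r) = (C_*/r^2)^{1/(1-m)}\{1 \pm B r^{-\gamma} + o(r^{-\gamma})\}$ with $B$ independent of $\lambda$ is established, substituting and relabelling immediately yields $B_\lambda = B\lambda^{-\gamma}$. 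So the task reduces to analyzing $f_1$.

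Next I would linearize around the singular profile $\cC$ by setting $r^2 f_1^{1-m}(r) = C_*(1+v(r))$, so that $v(r)\to 0$ as $r\to\infty$ by \eqref{eq-1st-asymp-ellip}. Changing variables to $s = \log r$, the radial form of \eqref{eq-ellip} becomes an autonomous second-order ODE for $V(s):= v(e^s)$ whose linear part is
$$
V''(s) - \frac{A_0(m,\beta)}{1-m}\, V'(s) + \frac{2(n-2-nm)}{1-m}\, V(s) = 0,
$$
with characteristic polynomial exactly \eqref{eq-char}. Under our standing hypothesis $\beta > \beta_0(m)$, the two positive roots $\gamma_2 > \gamma_1 > 0$ from \eqref{eq-roots-gamma-intro} give decay modes $r^{-\gamma_1}$ and $r^{-\gamma_2}$; a standard stable-manifold / iterative perturbation argument then provides an expansion $v(r) = B r^{-\gamma_1} + o(r^{-\gamma_1})$ for some $B\in\R$ (or an $r^{-\gamma_2}$ leading behaviour when $B=0$), after which $f_1(r) = \cC(r)(1+v(r))^{1/(1-m)}$ yields the stated form of the asymptotics in each case.

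Part (c) is handled explicitly: when $\beta = \beta_1(m)$ the unique profile is $\cB_{\sqrt{C_*}}$ as in \eqref{eq-barenblatt-scaled}, and a binomial expansion of $(1+C_*/r^2)^{-1/(1-m)}$ yields $v(r) = -\tfrac{C_*}{1-m}\,r^{-2} + O(r^{-4})$. In the range of (C3) one checks that $\gamma_1(m,\beta_1(m)) < 2 = \gamma_2(m,\beta_1(m))$, so the coefficient of $r^{-\gamma_1}$ vanishes and the effective decay exponent jumps to $\gamma_2 = 2$ with $B = C_*/(1-m) > 0$, giving the negative sign stated in (c).

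The crux lies in cases (a) and (b): proving $B\neq 0$ and determining its sign. For this I would construct barriers of the form $w_\pm(r) := \cC(r)\bigl(1 \pm \varepsilon r^{-\gamma_1}\bigr)^{1/(1-m)}$ for small $\varepsilon>0$ and large $r$, plug them into the stationary operator $\mathcal{L}[f] := \Delta f^m + \alpha f + \beta y\cdot \nabla f$, and use that the leading $r^{-\gamma_1}$ term in $\mathcal{L}[w_\pm]/\cC$ cancels by the very definition of $\gamma_1$ as a root of \eqref{eq-char}. The sign of the next-order correction --- a polynomial in $\varepsilon$ whose coefficients depend on $\beta - \beta_1(m)$ and on $\gamma_1 - 2$ (equivalently, on $m$ relative to $\tfrac{n-4}{n-2}$) --- decides whether $w_+$ or $w_-$ is a sub- or supersolution, which reproduces precisely the dichotomy (C1) versus (C2). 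An ODE comparison on $(R,\infty)$, using the radial maximum principle, then pins down the sign of $B$. I expect this sign/barrier analysis to be the main technical obstacle: the cancellations in $\mathcal{L}[w_\pm]$ are delicate, and the transitional regime where $\gamma_1\to\gamma_2$ (i.e.\ $\beta\to\beta_0(m)^+$) produces a resonance between the two decay modes, which is the structural reason for requiring strict inequality $\beta>\beta_0(m)$ in the hypotheses.
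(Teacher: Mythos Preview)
Your reduction by scaling and your handling of part~(c) via the explicit Barenblatt profile are correct and match the paper's approach. The linearization is also along the right lines, although note a sign slip: with $s=\log r$ and decaying solutions $e^{-\gamma_j s}$, the linearized operator should read $V''+\tfrac{A_0}{1-m}V'+\tfrac{2(n-2-nm)}{1-m}V$, not $V''-\tfrac{A_0}{1-m}V'+\cdots$. (The paper in fact linearizes in the variable $g=(f/\cC)^m$ rather than $(f/\cC)^{1-m}$, which makes the nonlinear remainder more tractable; see below.)

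The genuine gap is in your treatment of the sign and non-vanishing of $B$ in cases~(a) and~(b). Your barrier scheme has two unresolved issues. First, the stationary operator $\mathcal{L}[f]=\Delta f^m+\alpha f+\beta y\cdot\nabla f$ carries a zeroth-order term $+\alpha f$ of the \emph{wrong} sign for a standard elliptic comparison principle; you invoke ``the radial maximum principle'' but do not explain why any comparison of $f_1$ with $w_\pm$ on $(R,\infty)$ is valid, nor how the boundary condition at $r=\infty$ (where both $f_1$ and $w_\pm$ approach $\cC$) is handled. Second, even granting a comparison, knowing that (say) $w_-$ is a subsolution and $w_+$ a supersolution would only trap $f_1$ between them, yielding $|v|\le \varepsilon r^{-\gamma_1}$; it does not give a one-sided bound $|v|\ge c\,r^{-\gamma_1}$, hence does not rule out $B=0$ (with the true decay then governed by $\gamma_2$). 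Your phrase ``pins down the sign of $B$'' glosses over exactly this point.

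The paper avoids barriers entirely. Writing $g=(f_1/\cC)^m=1+w$, one obtains $L[w]=h$ with $h=-C_*\bigl(\beta\Phi'+\tfrac{1}{1-m}\Phi\bigr)$ and $\Phi=\phi(w)$, where $\phi(z)=(1+z)^{1/m}-1-z/m$ is \emph{nonnegative} and convex on $(-1,\infty)$. Variation of constants then gives the exact identity
\[
w(s)=-C_0\int_{-\infty}^{s}\Bigl(A_1 e^{-\gamma_1(s-t)}-A_2 e^{-\gamma_2(s-t)}\Bigr)\Phi(t)\,dt,\qquad A_j:=\tfrac{1}{1-m}-\beta\gamma_j.
\]
Because $\Phi\ge 0$ (independently of the sign of $w$) and $A_1>A_2$, the sign of $w$ is dictated directly by the sign of $A_1$; an algebraic analysis shows $A_1>0$ under (C1), $A_1<0$ under (C2), and $A_1=0$, $A_2<0$ under (C3). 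Finiteness of $\mathbf{I}_1=\int_{-\infty}^{\infty}e^{\gamma_1 t}\Phi(t)\,dt$ is proved by a short contradiction argument, after which $\lim_{s\to\infty}e^{\gamma_1 s}w(s)=-C_0A_1\mathbf{I}_1\ne 0$ follows immediately. This self-consistency --- the nonlinear remainder is automatically nonnegative, so the integral formula forces the sign of $w$ --- is the mechanism your barrier argument is missing.
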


\begin{remark}
\hspace{1cm}

\begin{enumerate}[(i)]

\item 
\cite[Theorem 1.1]{DKS} can be regarded as a special case of  Theorem \ref{thm-self-sol-2nd-asymp} with  $m=\frac{n-2}{n+2}$ and $n\ge 3$. We also mention that  the asymptotic  result  of   \cite[Theorem 1.2]{DKS}    for    self-similar profiles with a singularity at the origin    in the case   $m=\frac{n-2}{n+2}$  was extended by K.M.~Hui in \cite{Hui} to  the subcritical range $0<m<\frac{n-2}{n}$, $n\geq3$.
  
\item For $n>4$, the critical exponent $m=\frac{n-4}{n-2}$   appears  in   Theorem \ref{thm-self-sol-2nd-asymp}. In fact, Lemma \ref{lem-beta0-beta1} shows that the parameters $ \beta_0(m)$ and $ \beta_1(m)$ are equal to each other if and only if $m=\frac{n-4}{n-2}$ and $n>4$.  As observed by P.~Daskalopoulos and N.~Sesum \cite{DS} when $\beta=\beta_1(m)$ and $n\ge 3$, the condition  $\frac{n-4}{n-2}<m<\frac{n-2}{n}$  implies  that the    difference of two  Barenblatt profiles $\cB_k$ is integrable. This result will be generalized in Lemma \ref{lem-diff-solitons-integrable} using the results obtained in Theorem \ref{thm-self-sol-2nd-asymp}. 

\end{enumerate}
\end{remark}

In the following theorem, we examine the  monotonicity  of $f_{\lambda}$ with respect to $\lambda>0$ and the integrability  of $f_{\lambda_1}-f_{\lambda_2}$ for any $\lambda_2>\lambda_1>0$.    
  
\begin{thm}\label{thm-self-sol-2nd-asymp-mono}
Let  $n\ge3$, $ 0<m<\frac{n-2}{n}$, $\beta> \beta_0(m)$ and $\alpha$ be given by \eqref{eq-beta-exist}. For any $\lambda>0$, let $f_{\lambda}$ be the unique radially symmetric solution of \eqref{eq-ellip}. Then the following holds.
\begin{enumerate}[(a)]

\item  (Monotone increasing case) If \eqref{eq-cond-mono} holds,  then
\begin{equation}\label{eq-(A)-mono}
f_{\lambda_1} (r)< f_{\lambda_2}(r)\qquad \forall r\geq0,\,\,\lambda_2>\lambda_1>0.
\end{equation}
Moreover    
$$
f_{\lambda_1}-f_{\lambda_2}\not \in L^1( \R^n) \qquad\forall \lambda_2>\lambda_1>0.
$$

\item (Non-monotone increasing case) If \eqref{eq-cond-non-mono} holds, there exists a constant $R_0>0$ such that  
\begin{equation}\label{eq-(B)-non-mono}
f_{\lambda_1} (r)> f_{\lambda_2}(r)\qquad \forall r\geq R_0/\lambda_1,\,\,\lambda_2>\lambda_1>0.
\end{equation}
Moreover 
\begin{equation}\label{eq-(B)-integ}
f_{\lambda_1}-f_{\lambda_2}\in L^1( \R^n) \qquad\forall \lambda_2>\lambda_1>0.
\end{equation}

\item  (Monotone increasing and integrable case) If \eqref{eq-cond-mono-beta1} holds,  then both  \eqref{eq-(A)-mono} and  \eqref{eq-(B)-integ} hold.
\end{enumerate}
\end{thm}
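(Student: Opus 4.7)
My plan is to reduce everything to properties of a single function by exploiting the scaling invariance of \eqref{eq-ellip}. A direct substitution shows that $(y,\lambda)\mapsto\lambda^{2/(1-m)}f_1(\lambda y)$ solves \eqref{eq-ellip} with initial value $\lambda^{2/(1-m)}$, so the uniqueness statement underlying Theorem \ref{thm-self-sol-2nd-asymp} (from \cite{Hsu1}) yields
\[
f_\lambda(r)=\lambda^{2/(1-m)}f_1(\lambda r)\qquad\forall\, r\ge 0,\;\lambda>0.
\]
Setting $F(s):=s^{2/(1-m)}f_1(s)$, this rewrites for $r>0$ as $f_\lambda(r)=r^{-2/(1-m)}F(\lambda r)$, so that
\[
f_{\lambda_2}(r)-f_{\lambda_1}(r)=r^{-2/(1-m)}\bigl[F(\lambda_2 r)-F(\lambda_1 r)\bigr].
\]
By \eqref{eq-1st-asymp-ellip}, $F(0^+)=0$ and $F(\infty)=C_*^{1/(1-m)}$, and Theorem \ref{thm-self-sol-2nd-asymp} refines this by saying that $F$ approaches its limit from below in cases \eqref{eq-cond-mono} and \eqref{eq-cond-mono-beta1}, and from above in case \eqref{eq-cond-non-mono}, with leading correction of order $\mp C_*^{1/(1-m)}Bs^{-\gamma}$.

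For parts (a) and (c), the monotonicity statement \eqref{eq-(A)-mono} is equivalent to strict monotonicity of $F$ on $(0,\infty)$; in (c) this is immediate because $f_\lambda=\cB_{\sqrt{C_*}/\lambda}$ is explicit and manifestly increasing in $\lambda$. In case \eqref{eq-cond-mono} I plan a linearization argument: from the scaling identity,
\[
h(r):=\partial_\lambda f_\lambda(r)=r^{1-2/(1-m)}F'(\lambda r)\qquad(r>0),
\]
and $h$ solves the linear radial equation $\Delta(mf_\lambda^{m-1}h)+\alpha h+\beta y\cdot\nabla h=0$ with $h(0)=\tfrac{2}{1-m}\lambda^{2/(1-m)-1}>0$; differentiating the asymptotic in Theorem \ref{thm-self-sol-2nd-asymp}(a) also gives $h(r)>0$ for large $r$. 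A Sturm-type argument for this second-order linear ODE, combined with positivity at both endpoints, then rules out any interior zero of $h$, yielding $F'>0$ on $(0,\infty)$ and hence \eqref{eq-(A)-mono}.

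For the non-monotone statement \eqref{eq-(B)-non-mono} in part (b), the scaling reduction and Theorem \ref{thm-self-sol-2nd-asymp}(b) yield
\[
F(\lambda_1 r)-F(\lambda_2 r)=C_*^{1/(1-m)}B\bigl(\lambda_1^{-\gamma}-\lambda_2^{-\gamma}\bigr)r^{-\gamma}\bigl(1+o(1)\bigr)>0\qquad\hbox{as }r\to\infty,
\]
which implies \eqref{eq-(B)-non-mono} once $\lambda_1 r\ge R_0$, with $R_0=R_0(m,\beta)$ absorbing the error term. The integrability statements are purely asymptotic: the same expansion gives a two-sided bound $|f_{\lambda_1}(r)-f_{\lambda_2}(r)|\asymp r^{-2/(1-m)-\gamma}$ as $r\to\infty$, so integrability on $\R^n$ is equivalent to $\gamma>\frac{n-2-nm}{1-m}$. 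Substituting this threshold into \eqref{eq-char} shows it is a root precisely when $\beta=\beta_1(m)$, and since $\gamma_1(m,\beta)$ is strictly decreasing in $\beta$ one obtains $\gamma_1<\frac{n-2-nm}{1-m}$ throughout \eqref{eq-cond-mono} (non-integrability; the strict sign from (a) supplies the matching lower bound on the positive function $f_{\lambda_2}-f_{\lambda_1}$), $\gamma_1>\frac{n-2-nm}{1-m}$ throughout \eqref{eq-cond-non-mono} (integrability), and in \eqref{eq-cond-mono-beta1} the value $\gamma=2$ exceeds $\frac{n-2-nm}{1-m}$ under the standing hypothesis $m>(n-4)/(n-2)$ (trivial for $n\le4$).

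\emph{Main obstacle.} The principal difficulty lies in part (a): Theorem \ref{thm-self-sol-2nd-asymp} controls $f_\lambda$ only at $r=\infty$ while $f_\lambda(0)=\lambda^{2/(1-m)}$ handles $r=0$, so excluding crossings of $f_{\lambda_1}$ and $f_{\lambda_2}$ at intermediate $r$ is the step that needs the Sturm-type analysis of the linearized radial operator above, which in turn requires using the structure of the coefficients at the positive profile $f_\lambda$ together with the standing condition $\beta>\beta_0(m)$.
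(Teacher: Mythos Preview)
Your scaling reduction to $F(s)=s^{2/(1-m)}f_1(s)$ and the overall strategy parallel the paper, but two steps have genuine gaps.

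\textbf{Monotonicity in (a) and reverse monotonicity in (b).} The Sturm-type argument on the linearized equation for $h=\partial_\lambda f_\lambda$ is incomplete: positivity of $h$ at $r=0$ and $r=\infty$ does not by itself rule out interior zeros of a solution to a linear second-order ODE, and in any case Theorem~\ref{thm-self-sol-2nd-asymp}(a) controls $F$, not $F'$, so ``differentiating the asymptotic'' to obtain $h(r)>0$ for large $r$ is not justified. Likewise in (b), your expansion for $F(\lambda_1 r)-F(\lambda_2 r)$ is non-uniform as $\lambda_2\to\lambda_1$: the error is $o\bigl((\lambda_1 r)^{-\gamma}\bigr)$, not $o$ of the difference $(\lambda_1^{-\gamma}-\lambda_2^{-\gamma})r^{-\gamma}$, so it cannot produce an $R_0$ independent of $(\lambda_1,\lambda_2)$. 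Indeed \eqref{eq-(B)-non-mono} is equivalent to $F$ being strictly decreasing on $[R_0,\infty)$, again a statement about $F'$. The paper handles both by a first-critical-point argument on the nonlinear ODE \eqref{eq-2nd-asymp-q} for $g_\lambda(s)=\bigl[C_*^{-1/(1-m)}e^{2s/(1-m)}f_\lambda(e^s)\bigr]^m$: the key input is the global sign $g_\lambda<1$ under \eqref{eq-cond-mono} or \eqref{eq-cond-mono-beta1} (resp.\ $g_\lambda>1$ for large $s$ under \eqref{eq-cond-non-mono}) coming from Lemma~\ref{lem-est-w}; evaluating \eqref{eq-2nd-asymp-q} at a hypothetical first zero of $g_\lambda'$ then gives an immediate contradiction. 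This yields $F'>0$ on $(0,\infty)$ in (a) and $F'<0$ on $[R_0,\infty)$ in (b) without linearizing or differentiating asymptotics.

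\textbf{Integrability in case (ii) of \eqref{eq-cond-mono}.} Your argument ``$\gamma_1$ is decreasing in $\beta$ and equals the threshold $\tfrac{n-2-nm}{1-m}$ at $\beta=\beta_1$'' breaks down there: when $m<\tfrac{n-4}{n-2}$ one has $\tfrac{n-2-nm}{1-m}>2$, so at $\beta=\beta_1$ it is $\gamma_2$, not $\gamma_1$, that equals the threshold, and monotonicity of $\gamma_1$ then points the wrong way for $\beta\le\beta_1$. The paper computes $n-\tfrac{2}{1-m}-\gamma_1$ directly in this subcase; alternatively, use that $\gamma_2$ is increasing in $\beta$ to get $\gamma_1<\gamma_2<\tfrac{n-2-nm}{1-m}$ for $\beta_0<\beta<\beta_1$.
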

 
Now we establish the existence  of solutions  to  \eqref{eq-fde} which  vanish  at time $T$ when   the initial value $u_0$  is close to the initial value of a  self-similar solution  $U_\lambda$ given by \eqref{eq-U-ld}.  
Moreover by using the properties of $f_{\lambda}$ in Theorem \ref{thm-self-sol-2nd-asymp} and Theorem \ref{thm-self-sol-2nd-asymp-mono}, we address the vanishing asymptotics   of  such solutions near the extinction time. We will let   $T>0$ and $U_\lambda$ be given by \eqref{eq-U-ld} for the rest of the paper.

 \begin{thm}[Existence]\label{thm-existence-fde}
 Let  $n\ge3$, $ 0<m<\frac{n-2}{n}$, $\beta> \beta_e(m)$ and $\alpha$ be given by \eqref{eq-beta-exist}. Suppose  $u_0$   satisfies 
\begin{equation}\label{u0<U-lambda_2}
0\leq u_0\leq U_{\lambda_2}(\cdot,0)  \qquad\hbox{ in }\,\,\R^n  
\end{equation}
and 
\begin{equation}\label{eq-initial-soliton-L1}
u_0-U_{\lambda_0}(\cdot,0)\in L^1(\R^n)
\end{equation} 
for some constants  $\lambda_2>0$, $\lambda_0>0$.
Then there exists a unique   solution $u$ to the Cauchy problem \eqref{eq-fde}  which  satisfies
\begin{equation}\label{ab-ineqn}
u_t\leq \frac{u}{(1-m)t}\qquad\hbox{ in  }\,\,\R^n\times(0,T). 
\end{equation}
Moreover the following  holds. 
\begin{enumerate}[(i)]
\item  
\begin{equation}\label{u-u-lambda-ineqn2}
0 < u(x,t)\le U_{\lambda_2}(x,t)\qquad\hbox{ in }\,\,\R^n\times(0,T).
\end{equation}

\item For any $0<t<T$, 
\begin{equation}\label{eq-exist-sol-contraction}
\int_{\R^n} |u(x,t)-U_{\lambda_0}(x,t)|dx\leq \int_{\R^n} |u_0(x)-U_{\lambda_0}(x,0)|dx. 
\end{equation} 

\item
In addition, if $u_0$ also satisfies 
\begin{equation}\label{u0>U-lambda}
u_0\ge U_{\lambda_1}(\cdot,0) \qquad\hbox{ in }\,\,\R^n  
\end{equation}
for some constant $\lambda_1>0$, then
\begin{equation}\label{u>u-lambda-ineqn1}
u(x,t)\ge U_{\lambda_1}(x,t)\qquad\hbox{ in }\,\,\R^n\times(0,T).
\end{equation}
\end{enumerate}
\end{thm}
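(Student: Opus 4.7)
The plan is to construct $u$ as a limit of smooth solutions to Dirichlet problems on expanding balls $B_R$, and to pass the key bounds (sandwich, Aronson--B\'enilan, $L^1$ contraction) to the limit. First I would regularize the initial data by choosing smooth functions $u_0^R$ on $B_R$ satisfying $u_0^R\le U_{\lambda_2}(\cdot,0)$ (and, when the extra hypothesis is in force, $u_0^R\ge U_{\lambda_1}(\cdot,0)$), which agree with $U_{\lambda_2}(\cdot,0)$ near $\partial B_R$ and converge to $u_0$ on $\R^n$ as $R\to\infty$. The initial-boundary value problem
\begin{equation*}
u^R_t=\Delta (u^R)^m \text{ in } B_R\times(0,T),\q u^R=U_{\lambda_2}\text{ on }\partial B_R\times(0,T),\q u^R(\cdot,0)=u_0^R
\end{equation*}
admits a smooth positive solution $u^R$ by classical quasilinear parabolic theory, since $u^R$ is bounded away from zero on the parabolic boundary and the equation is uniformly parabolic on compact subsets.

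Next I would apply the parabolic comparison principle to obtain $U_{\lambda_1}\le u^R\le U_{\lambda_2}$ on $B_R\times(0,T)$ (with the lower bound only under the additional hypothesis), and the Aronson--B\'enilan inequality \eqref{ab-ineqn} at the approximate level by applying the maximum principle to the evolution equation satisfied by $(u^R)_t-u^R/((1-m)t)$. The sandwich bounds together with DiBenedetto-type interior H\"older and gradient estimates for the fast diffusion equation give compactness of $\{u^R\}$ on every parabolic cylinder $K\times[\tau_1,\tau_2]\Subset\R^n\times(0,T)$. Extracting a subsequence and sending $R\to\infty$ produces a smooth positive solution $u$ of \eqref{eq-fde} satisfying \eqref{u-u-lambda-ineqn2}, \eqref{ab-ineqn} and \eqref{u>u-lambda-ineqn1} when applicable. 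Attainment of the initial datum follows from the squeeze by $U_{\lambda_2}$ and continuity of $U_{\lambda_2}(\cdot,t)$ at $t=0$.

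For the $L^1$ contraction \eqref{eq-exist-sol-contraction}, I would introduce the analogous approximation $V^R$ of $U_{\lambda_0}$ on $B_R$, with boundary value $U_{\lambda_0}|_{\partial B_R}$ and an initial value that is a smooth modification of $U_{\lambda_0}(\cdot,0)$ matching $u_0^R$ near $\partial B_R$. The difference $w^R:=u^R-V^R$ satisfies a linear divergence-form equation $w^R_t=\Delta(a^R w^R)$ with nonnegative coefficient $a^R=((u^R)^m-(V^R)^m)/(u^R-V^R)$, so multiplying by a smooth approximation of $\mathrm{sign}(w^R)$ (Kato's inequality) and integrating on $B_R$ yields $\int_{B_R}|w^R(\cdot,t)|\,dx\le \int_{B_R}|w^R(\cdot,0)|\,dx$, since the boundary flux vanishes by the matching $w^R|_{\partial B_R}=0$. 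Sending $R\to\infty$ and invoking the hypothesis \eqref{eq-initial-soliton-L1} delivers \eqref{eq-exist-sol-contraction}, and uniqueness is then immediate from \eqref{eq-exist-sol-contraction} applied to two solutions sharing the same initial value $u_0$.

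The main obstacle is that $U_{\lambda_0}(\cdot,0)$ decays only as $|x|^{-2/(1-m)}$ with $2/(1-m)<n$ in the subcritical range, so neither $u$ nor $U_{\lambda_0}$ is individually in $L^1(\R^n)$; the contraction is meaningful only for the difference. Consequently I must design the approximating boundary and initial data for $V^R$ so that it tracks $u^R$ closely enough outside a large ball for $\int_{B_R}|w^R(\cdot,0)|\,dx$ to converge to $\int_{\R^n}|u_0-U_{\lambda_0}(\cdot,0)|\,dx$ as $R\to\infty$. The precise second-order decay information on $f_\lambda$ from Theorem \ref{thm-self-sol-2nd-asymp} and the comparison/integrability results from Theorem \ref{thm-self-sol-2nd-asymp-mono} are exactly what is needed to quantify the tails of $U_{\lambda_0}-U_{\lambda_2}$ and render this matching tractable; once this is in hand the remaining steps are by now standard.
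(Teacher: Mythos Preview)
Your approximation-by-Dirichlet-problems scheme is a reasonable overall strategy, but the $L^1$-contraction step has an internal inconsistency that would not close. You set $u^R|_{\partial B_R}=U_{\lambda_2}$ but $V^R|_{\partial B_R}=U_{\lambda_0}$, so $w^R=u^R-V^R$ does \emph{not} vanish on $\partial B_R$; the boundary term you say ``vanishes by the matching $w^R|_{\partial B_R}=0$'' is actually $\int_{\partial B_R}\partial_\nu\big(a^R w^R\big)\,dS$ with $a^R\sim m|x|^2$ and $|w^R|\sim |U_{\lambda_2}-U_{\lambda_0}|$ there, and this term need not tend to zero as $R\to\infty$. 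You try to patch this at the end by appealing to the second-order asymptotics in Theorems \ref{thm-self-sol-2nd-asymp} and \ref{thm-self-sol-2nd-asymp-mono}, but those results require $\beta>\beta_0(m)$, whereas the present theorem is stated for all $\beta>\beta_e(m)$; in the range $\beta_e(m)<\beta\le\beta_0(m)$ you have no second-order expansion available, and in fact no integrability of $U_{\lambda_2}-U_{\lambda_0}$ is asserted or needed. A smaller issue: attainment of the initial datum does not follow from a one-sided bound $u\le U_{\lambda_2}$ alone.

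The paper proceeds differently and avoids these difficulties. It quotes \cite[Theorem~1.1]{Hsu2} as a black box: the hypothesis \eqref{eq-initial-soliton-L1} together with the first-order asymptotics \eqref{eq-1st-asymp-ellip} of $f_{\lambda_0}$ gives the averaged lower bound $\liminf_{R\to\infty}R^{-(n-\frac{2}{1-m})}\int_{B_R}u_0\ge c_0 T^{1/(1-m)}$, which yields existence (with \eqref{ab-ineqn}) on a short interval, and the comparison bounds \eqref{u-u-lambda-ineqn2}, \eqref{u>u-lambda-ineqn1} come from the approximating construction in \cite{Hsu2}. The $L^1$-contraction \eqref{eq-exist-sol-contraction} is obtained from \cite[Lemma~5.1]{Hui}, whose proof uses a cutoff $\eta_R$ supported in $B_{2R}$ (no lateral boundary) and only the first-order lower bound $U_{\lambda_0}(x,t)\ge c\,|x|^{-2/(1-m)}$; no second-order information enters. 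Finally, the paper shows the maximal existence time equals $T$ by a continuation argument: if $T_*<T$ then \eqref{eq-exist-sol-contraction} at time $T_*-\delta$ reproduces the averaged lower bound, and \cite[Theorem~1.1]{Hsu2} extends the solution past $T_*$, a contradiction. If you wish to keep your from-scratch construction, the right fix is to run the Kato argument directly on $\R^n$ against a compactly supported cutoff (as in \cite{Hui}), using only the first-order decay of $f_{\lambda_0}$, rather than trying to match boundary values on $\partial B_R$.
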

  
In  Theorem \ref{thm-existence-fde},  we assume that $u_0$ is a $L^1$-perturbation of  $U_{\lambda_0}(\cdot,0)$ and   is bounded from above by $U_{\lambda_2}(\cdot,0)$. It is worth noting that the upper bound of  $U_{\lambda_2}(\cdot,0)$ is responsible  for the solution $u$ of \eqref{eq-fde} to have the same  extinction time $T$ as the self-similar solution $U_{\lambda_2}$ (cf. Theorem 1.6 of \cite{DKS}).   

 \medskip
 
Next, we will study  the extinction profiles of  the solutions to  \eqref{eq-fde}  given in   Theorem \ref{thm-existence-fde} under  the condition   \eqref{eq-cond-mono},  \eqref{eq-cond-non-mono} or    \eqref{eq-cond-mono-beta1} using results of Theorem  \ref{thm-self-sol-2nd-asymp} and Theorem \ref{thm-self-sol-2nd-asymp-mono}. 
  As mentioned before,  asymptotic stabilization to the Barenblatt profiles  $\cB_k$  in the rescaled variables (for the case   $\beta=\beta_1(m)$)  was established  in \cite{DS,BBDGV}, which corresponds to the case \eqref{eq-cond-mono-beta1}.  Henceforth in this paper we restrict our attention to cases     \eqref{eq-cond-mono} and  \eqref{eq-cond-non-mono}. 
 
 In the monotone increasing case \eqref{eq-cond-mono}, we obtain asymptotic stability of the solutions to  \eqref{eq-fde} in rescaled variables as $t$ approaches  the extinction time $T$.  That is,   the rescaled  solution $\tilde u $    converges   to  a self-similar profile $f_{\lambda}$ locally uniformly in $\R^n$ as the rescaled time $\tau\to\infty$. 

\begin{thm}[Convergence to a self-similar profile]\label{thm-uniform-convergence-tilde-u-compact}  
Suppose   \eqref{eq-cond-mono} holds and  $u_0$ satisfies  \eqref{eq-initial-soliton-L1} and 
\begin{equation} \label{u-lower-upper-bd-ini}
U_{\lambda_1}(\cdot,0)\leq  u_0\leq  U_{\lambda_2}(\cdot,0)\qquad\hbox{ in } \,\,\R^n
\end{equation}
for  some  constants   $\lambda_2\geq \lambda_0\geq \lambda_1>0$.    Let    $u $   be    the unique   solution of  \eqref{eq-fde}  given by  Theorem \ref{thm-existence-fde}. 
Then 
the rescaled solution  $\tilde u  (\cdot, \tau)$   given by \eqref{u-tilde-defn}    converges to $f_{\lambda_0}$ uniformly in $C^2(K)$ as $\tau\to\infty$ for any  compact subset $K\subset\R^n$.  
Moreover, if (i) of \eqref{eq-cond-mono} holds, then   
\begin{equation}\label{eq-resc-sol-exp}
\left\|\tilde u(\cdot,\tau)-f_{\lambda_0} \right\|_{L^1(\R^n)}\leq   T^{n\beta-\alpha} e^{-(n\beta-\alpha)\tau}\left\|u_0 -U_{\lambda_0}(\cdot,0)\right\|_{L^1(\R^n)}\qquad\forall\tau>0
\end{equation}  
with $n\beta-\alpha=\frac{1}{1-m}\left(\frac{\beta}{\beta_1(m)}-1\right)  >0$. 
\end{thm}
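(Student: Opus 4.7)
The plan is to: (a) obtain a pointwise two-sided sandwich for the rescaled solution $\tilde u$ via Theorem~\ref{thm-existence-fde}; (b) deduce the $L^1$-estimate \eqref{eq-resc-sol-exp} directly from the $L^1$-contraction \eqref{eq-exist-sol-contraction} by a change of variables; and (c) upgrade these to $C^2_{\mathrm{loc}}$ convergence by parabolic regularity together with an $\omega$-limit identification relying on Theorem~\ref{thm-self-sol-2nd-asymp-mono}(a).

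\emph{Steps 1 and 2.} From \eqref{u-lower-upper-bd-ini}, parts \eqref{u-u-lambda-ineqn2} and \eqref{u>u-lambda-ineqn1} of Theorem~\ref{thm-existence-fde} give $U_{\lambda_1}\le u\le U_{\lambda_2}$ on $\R^n\times(0,T)$, which via \eqref{u-tilde-defn} becomes $f_{\lambda_1}(y)\le\tilde u(y,\tau)\le f_{\lambda_2}(y)$ on $\R^n\times[0,\infty)$. Hence $\tilde u$ is bounded and bounded away from zero on each compact subset, the rescaled equation \eqref{eq-fde-rescaled} is uniformly parabolic there, and classical Schauder theory yields uniform $C^{2+\alpha,1+\alpha/2}_{\mathrm{loc}}$ bounds for $\tilde u$ on $\R^n\times[1,\infty)$. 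For the $L^1$-estimate, the substitution $x=(T-t)^{-\beta}y$, $T-t=Te^{-\tau}$ gives $\tilde u(y,\tau)-f_{\lambda_0}(y)=(T-t)^{-\alpha}[u(x,t)-U_{\lambda_0}(x,t)]$ and $dy=(T-t)^{n\beta}dx$, so
\[\|\tilde u(\cdot,\tau)-f_{\lambda_0}\|_{L^1(\R^n)}=(T-t)^{n\beta-\alpha}\|u(\cdot,t)-U_{\lambda_0}(\cdot,t)\|_{L^1(\R^n)};\]
combining with the algebraic identity $n\beta-\alpha=\tfrac{1}{1-m}(\beta/\beta_1(m)-1)$ and the $L^1$-contraction \eqref{eq-exist-sol-contraction} produces \eqref{eq-resc-sol-exp}. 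Under (i) of \eqref{eq-cond-mono}, $\beta>\beta_1(m)$ makes this exponent positive, so the rescaled $L^1$-distance decays exponentially; together with the local $C^2$ bounds of Step~1 this forces any $C^2_{\mathrm{loc}}$-subsequential limit of $\tilde u(\cdot,\tau)$ to equal $f_{\lambda_0}$, completing the proof in case (i).

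\emph{Step 3 and main obstacle.} Case (ii) of \eqref{eq-cond-mono} is more delicate because $\beta\le\beta_1(m)$ makes the exponent $n\beta-\alpha$ non-positive and \eqref{eq-resc-sol-exp} alone no longer yields decay. I would handle this by an $\omega$-limit argument: for $\tau_k\to\infty$, extract from Step~1 a subsequential $C^{2,1}_{\mathrm{loc}}$-limit $v(y,\sigma)$ of the time-translates $\tilde u(\cdot,\cdot+\tau_k)$, which solves \eqref{eq-fde-rescaled} with $f_{\lambda_1}\le v\le f_{\lambda_2}$. Rewriting the Aronson--B\'enilan bound \eqref{ab-ineqn} in self-similar variables yields
\[\La\tilde u^m=\tilde u_\tau-\alpha\tilde u-\beta y\cdot\D\tilde u\le\frac{T-t}{(1-m)t}\,\tilde u,\]
whose right-hand side tends to $0$ as $\tau\to\infty$; passing to the $\tau_k$-limit and using the two-sided sandwich forces $v$ to be $\sigma$-independent, so $v=f_\mu$ for some $\mu\in[\lambda_1,\lambda_2]$ by the uniqueness in \eqref{eq-ellip}. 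The strict monotonicity of $\lambda\mapsto f_\lambda$ and the non-integrability $f_\mu-f_{\lambda_0}\notin L^1(\R^n)$ for $\mu\neq\lambda_0$, both provided by Theorem~\ref{thm-self-sol-2nd-asymp-mono}(a), then rule out $\mu\ne\lambda_0$ via the uniform $L^1$-bound \eqref{eq-exist-sol-contraction}, so $v=f_{\lambda_0}$ along every subsequence. The main obstacle is promoting the one-sided Aronson--B\'enilan inequality to genuine stationarity of $v$; this is the step where the subcritical information encoded in Theorems~\ref{thm-self-sol-2nd-asymp} and \ref{thm-self-sol-2nd-asymp-mono}, combined with the sandwich from Theorem~\ref{thm-existence-fde}, must be leveraged.
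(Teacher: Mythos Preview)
Your handling of case~(i) of \eqref{eq-cond-mono} is correct: the change of variables gives \eqref{eq-resc-sol-exp}, and since $n\beta-\alpha>0$ there, the exponential $L^1$-decay combined with the uniform local $C^{2,1}$ bounds forces every subsequential limit to be $f_{\lambda_0}$. The paper, however, does not split into cases this way; it treats (i) and (ii) by a single argument and records \eqref{eq-resc-sol-exp} only afterwards as a byproduct.

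For case~(ii) your proposal has two genuine gaps. First, the Aronson--B\'enilan bound in rescaled variables gives only $\Delta \tilde u^m\le \frac{T-t}{(1-m)t}\tilde u$, hence in the limit $\Delta v^m\le 0$, i.e.\ $v_\sigma\le \alpha v+\beta y\cdot\nabla v$. This is one-sided and does not yield $v_\sigma=0$; there is no evident mechanism, using only the sandwich $f_{\lambda_1}\le v\le f_{\lambda_2}$ and Theorems~\ref{thm-self-sol-2nd-asymp}--\ref{thm-self-sol-2nd-asymp-mono}, to upgrade it. Second, even if stationarity were granted so that $v=f_\mu$, your plan to exclude $\mu\ne\lambda_0$ via ``the uniform $L^1$-bound \eqref{eq-exist-sol-contraction}'' fails: in case~(ii) one has $\beta\le\beta_1(m)$, so $n\beta-\alpha\le 0$, and the identity $\|\tilde u(\cdot,\tau)-f_{\lambda_0}\|_{L^1}=(Te^{-\tau})^{n\beta-\alpha}\|u(\cdot,t)-U_{\lambda_0}(\cdot,t)\|_{L^1}$ provides no uniform bound on the rescaled $L^1$-distance (it diverges when $\beta<\beta_1$), so Fatou cannot be invoked.

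The paper's key device, which you are missing, is a \emph{weighted} $L^1$-contraction principle for rescaled solutions trapped between $f_{\lambda_1}$ and $f_{\lambda_2}$ (Proposition~\ref{prop-strong-contraction}), with weight $\cC^{p_0}(y)$ for a specific exponent $p_0$ built from $\gamma_1(m,\beta)$ via \eqref{p0-defn}. Testing the Kato inequality against $\cC^{p_0}$ and using that $\gamma_1$ solves the characteristic equation \eqref{eq-char} produces not merely monotonicity of $\|\tilde u(\cdot,\tau)-f_{\lambda_0}\|_{L^1(\cC^{p_0};\R^n)}$ but the finite space--time integral
\[
\int_0^{\infty}\!\!\int_{\R^n}\big|\tilde u(y,s)-f_{\lambda_0}(y)\big|\,\Big|\Big[\tfrac{\cC(y)}{f_{\lambda_2}(y)}\Big]^{1-m}-1\Big|\,\cC^{p_0}(y)\,dy\,ds<\infty.
\]
Shifting time by $\tau_i$ and applying Fatou then forces the eternal limit $\tilde v$ to equal $f_{\lambda_0}$ directly, with no need for stationarity arguments or the non-integrability of $f_\mu-f_{\lambda_0}$.
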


\begin{remark}
\hspace{1cm}

\begin{enumerate}[(i)] 
\item The weighted $L^1$-contraction principle for the rescaled solution  $\tilde u$ in  Proposition \ref{prop-strong-contraction}  is a key ingredient in the study of convergence to a self-similar solution   in  Theorem \ref{thm-uniform-convergence-tilde-u-compact}.  Here the weighted $L^1$-space, $L^1(\cC^{p_0}; \R^n)$,  involves  the weight function  $\cC ^{p_0}(y)$,    
where $\cC(y)$ is given by \eqref{eq-def-tildeC} and  $p_0$ is  given by  \eqref{p0-defn}. Moreover  the $L^1$-integrability in \eqref{eq-initial-soliton-L1} implies that  $u_0- U_{\lambda_0} (\cdot,0) \in L^1(\cC^{p_0}; \R^n)$ since $\cC^{p_0}$ is integrable near the origin.

\item \cite[Theorem 1.2]{DS} can be regarded as a special case of Theorem \ref{thm-uniform-convergence-tilde-u-compact}  with $n>4$, $0<m < \frac{n-4}{n-2}$ and  $\beta=\beta_1(m)$. In \cite{DS}, the self-similar profiles $f_{\lambda}$ are explicitly given  by the rescaled Barenblatt profiles  $\cB_k$ in  \eqref{eq-barenblatt-scaled} as mentioned before.      Theorem 1.1 of \cite{DS} dealt with  the convergence of $\tilde u$   to a rescaled  profile  $\cB_k$  in the case of  \eqref{eq-cond-mono-beta1}, where  the integrability of the difference of two rescaled   profiles (cf. (c) of Theorem \ref{thm-self-sol-2nd-asymp-mono}) plays a critical role in  the proof. 

\item  
 In  the proof   of  \cite[Theorem 1.2]{DS}  when $n>4$, $0<m < \frac{n-4}{n-2}$ and  $\beta=\beta_1(m)$,   the authors used  the  Osher--Ralston approach \cite{OR} in order to establish    the convergence in a suitable weighted $L^1$-space.      But, it seems difficult to apply  the  Osher--Ralston approach in our case      \eqref{eq-cond-mono}   with   $\beta<\beta_1(m)$. In fact,  when  $\beta=\beta_1(m)$    (i.e., $\alpha=n\beta$)  in    \cite[Theorem 1.2]{DS},  the $L^1$-contraction principle  for   the solutions $u$ of  \eqref{eq-fde}   yields   the $L^1$-contraction principle  for     the rescaled solutions  $\tilde u$ since 
 $$
  \left\|\tilde u(\cdot,\tau)-f_{\lambda_0}\right\|_{L^1(\R^n)} = \left\|u(\cdot,t)-U_{\lambda_0}(\cdot,t)\right\|_{L^1(\R^n)}
$$ 
with $\tau=-\log \{(T-t)/ T\}>0$.  The    $L^1$-contraction   principle  for   $\tilde u$   was  crucial to proceed with the argument of Osher--Ralston in  \cite[Theorem 1.2]{DS}.

\end{enumerate}

\end{remark}

In the non-monotone increasing case    \eqref{eq-cond-non-mono}, we obtain a  new asymptotic result  of    the  rescaled  solution $\tilde u $ as $\tau\to\infty$.  More precisely we obtain the following result.

\begin{thm}[Convergence to zero]\label{thm-uniform-convergence-tilde-u-0} 
Suppose  \eqref{eq-cond-non-mono} holds and $u_0$ satisfies   \eqref{eq-initial-soliton-L1} and
 \begin{equation} \label{eq-tilde-sol-upper-bd-ini}
 0\leq   \, u_0\,  \leq\min \big\{U_{\lambda_1}(\cdot,0),U_{\lambda_2}(\cdot, 0)\big\}  \qquad\hbox{ in } \,\,\R^n
\end{equation}
for some constants $\lambda_2>\lambda_1>0$, $\lambda_0>0$. Let $u$ be the unique solution of \eqref{eq-fde} given by  Theorem \ref{thm-existence-fde}. 
Then      the rescaled solution  $\tilde u (\cdot,\tau) $   given by \eqref{u-tilde-defn}   converges to zero uniformly on any  compact subset of $\R^n$ as $\tau\to\infty$.  
\end{thm}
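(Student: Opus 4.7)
The plan is to combine a monotone a priori upper bound from comparison with the two self-similar solutions $U_{\lambda_1}$ and $U_{\lambda_2}$, a compactness argument for the rescaled trajectory $\{\tilde u(\cdot,\tau)\}_{\tau\ge 0}$, and a classification of the $\omega$-limit set via the sharp second-order asymptotic expansion of Theorem \ref{thm-self-sol-2nd-asymp}(b). The key mechanism is that, although $\tilde u$ is trapped between $0$ and $\min(f_{\lambda_1},f_{\lambda_2})$, the inversion of profile ordering \eqref{eq-(B)-non-mono} at infinity prevents any \emph{positive} self-similar profile $f_\lambda$ from being an $\omega$-limit, leaving only the zero solution.

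Concretely, I would first apply Theorem \ref{thm-existence-fde}(i) with $U_{\lambda_1}$ and $U_{\lambda_2}$ separately; in view of \eqref{eq-tilde-sol-upper-bd-ini} this yields
\[
0\le u(x,t)\le\min\bigl\{U_{\lambda_1}(x,t),\,U_{\lambda_2}(x,t)\bigr\}\quad\text{in }\R^n\times(0,T),
\]
equivalently $0\le\tilde u(y,\tau)\le\min\{f_{\lambda_1}(y),f_{\lambda_2}(y)\}$ in $\R^n\times[0,\infty)$. Combining this pointwise upper bound with the Aronson--B\'enilan estimate \eqref{ab-ineqn} and interior parabolic regularity for the non-degenerate fast diffusion equation on compact subsets of $\{\tilde u>0\}$, the family $\{\tilde u(\cdot,\tau)\}$ is precompact in $C_{\mathrm{loc}}(\R^n)$. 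A standard $\omega$-limit argument, passing to the limit in \eqref{eq-fde-rescaled} and using the weighted $L^1$-contraction of Proposition \ref{prop-strong-contraction} as a Lyapunov functional to force asymptotic stationarity, then shows that every limit $f_*:=\lim_{k\to\infty}\tilde u(\cdot,\tau_k)$ along a subsequence $\tau_k\to\infty$ is a non-negative radially symmetric stationary solution of \eqref{eq-ellip} with $f_*\le\min(f_{\lambda_1},f_{\lambda_2})$. By the uniqueness of radial solutions in Theorem 1.1 of \cite{Hsu1}, either $f_*\equiv 0$ or $f_*=f_\lambda$ for some $\lambda>0$.

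To rule out the latter alternative, $f_\lambda(0)\le f_{\lambda_1}(0)$ and $f_\mu(0)=\mu^{2/(1-m)}$ force $\lambda\le\lambda_1$. By Theorem \ref{thm-self-sol-2nd-asymp}(b), applied to both $f_\lambda$ and $f_{\lambda_2}$ under \eqref{eq-cond-non-mono},
\[
f_\mu(r)=\left(\frac{C_*}{r^2}\right)^{\frac{1}{1-m}}\bigl\{\,1+B\mu^{-\gamma}r^{-\gamma}+o(r^{-\gamma})\bigr\}\quad(r\to\infty)
\]
holds for every $\mu>0$ with the same $B>0$ and $\gamma=\gamma_1(m,\beta)>0$. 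The pointwise inequality $f_\lambda\le f_{\lambda_2}$ at infinity then forces $B\lambda^{-\gamma}\le B\lambda_2^{-\gamma}$, i.e.\ $\lambda\ge\lambda_2$, contradicting $\lambda\le\lambda_1<\lambda_2$. Hence $f_*\equiv 0$, and since every subsequential limit of $\tilde u(\cdot,\tau)$ vanishes, $\tilde u(\cdot,\tau)\to 0$ locally uniformly in $\R^n$ as $\tau\to\infty$. The main obstacle I foresee is the classification step: establishing the radial symmetry of $f_*$ so that Hsu's uniqueness theorem applies, since $u_0$ is not assumed radial. One remedy is to compare $\tilde u$ from above with a radial majorant constructed from $\min(f_{\lambda_1},f_{\lambda_2})$ and apply the strong maximum principle; an alternative is to extend the second-order asymptotic analysis of Theorem \ref{thm-self-sol-2nd-asymp}(b) directly to bounded positive stationary solutions of \eqref{eq-ellip} via the linearization around $\cC$ leading to the characteristic equation \eqref{eq-char}, yielding the same numerical contradiction without invoking radiality. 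Justifying the passage to the limit across the possible degeneracy set $\{f_*=0\}$ is a secondary technical concern handled through \eqref{ab-ineqn}.
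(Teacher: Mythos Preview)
Your proposal has a genuine gap at the stationarity step. You invoke Proposition \ref{prop-strong-contraction} (the weighted $L^1$-contraction) as a Lyapunov functional to force every $\omega$-limit to be a stationary solution of \eqref{eq-ellip}. But Proposition \ref{prop-strong-contraction} is stated and proved \emph{only under condition \eqref{eq-cond-mono}}, and its proof breaks down under \eqref{eq-cond-non-mono} in two essential ways. First, the sign computation \eqref{a-tilde-c-relation2} requires $f_{\lambda_2}<\cC$ (see \eqref{eq-est-f-ld-tildeC}), which by Lemma \ref{lem-est-w}(b) fails for large $|y|$ precisely in case \eqref{eq-cond-non-mono}: there $w_\lambda>0$ near $+\infty$, i.e.\ $f_\lambda>\cC$. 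Second, the proposition assumes the two-sided trapping $f_{\lambda_1}\le \tilde u,\tilde v\le f_{\lambda_2}$, whereas here you only have the one-sided bound $0\le\tilde u\le\min(f_{\lambda_1},f_{\lambda_2})$; without a positive lower barrier, the coefficient $\tilde a$ in \eqref{eq-est-tilde-a} is not controlled from above, and the whole weighted estimate collapses. So you have no Lyapunov functional, hence no mechanism to conclude that subsequential limits $f_*$ are time-independent, and your classification/contradiction argument never gets started. (The degeneracy concern you flag at the end is also real: without a lower bound, interior Schauder estimates are unavailable and one needs a result like \cite{Sa} just to get compactness in $C_{\mathrm{loc}}$.)

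The paper circumvents the missing Lyapunov functional entirely by a different, dynamical device. It introduces the scalar
\[
\lambda(\tau):=\inf\{\lambda>0:\ \tilde u(\cdot,\tau)\le f_\lambda\ \text{in }\R^n\},
\]
shows via the reverse ordering \eqref{eq-(B)-non-mono} and a strong comparison principle on both bounded and exterior domains that $\lambda(\tau)$ is \emph{strictly} decreasing in $\tau$, and then passes to the limit $\lambda_\infty=\lim_{\tau\to\infty}\lambda(\tau)$. Any subsequential limit $\tilde u_\infty$ of the time-shifted family satisfies $\tilde u_\infty\le f_{\lambda_\infty}$; if $\tilde u_\infty\not\equiv 0$, one repeats the same strict-decrease argument for $\tilde u_\infty$ to produce, for some time $s_2>0$, a profile $f_{\mu(s_1)}$ with $\mu(s_1)\le\lambda_\infty$ dominating $\tilde u_\infty(\cdot,s_2)$ strictly, and hence dominating $\tilde u(\cdot,\tau_i+s_2)$ for large $i$---contradicting $\lambda(\tau)>\lambda_\infty$ for all $\tau$. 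Your final asymptotic comparison (forcing $\lambda\ge\lambda_2$ from behavior at infinity and $\lambda\le\lambda_1$ at the origin) captures the same geometric obstruction that drives this argument, but it only becomes usable once stationarity is known; the paper's $\lambda(\tau)$ construction is exactly what replaces the unavailable Lyapunov step.
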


\begin{remark}
\hspace{1cm}

\begin{enumerate}[(i)]

\item In the case of  \eqref{eq-cond-non-mono}, we conclude from Theorem \ref{thm-uniform-convergence-tilde-u-0} that asymptotic stabilization of the rescaled solution to   the self-similar profile $f_{\lambda }$  does not occur, which     differs from Theorem \ref{thm-uniform-convergence-tilde-u-compact}.    In fact, if we fix $\lambda>0$ and consider the  rescaled solution $\tilde u_\ve$  with the initial value   $\tilde u_{0,\ve}=\min \left\{f_{\lambda} ,f_{\lambda-\ve} \right\}  $   for   $0<  \ve< \lambda$, then by \eqref{eq-(B)-non-mono}  there exists some constant  $R>0$ such that     
 $\tilde u_{0,\ve}\equiv f_{\lambda } $   on $\R^n\setminus B_R$ for     small $ 0< \ve <\lambda/2$.  Then such $\tilde u_{0,\ve}$ is a small perturbation of $f_{\lambda } $, and  by Theorem \ref{thm-uniform-convergence-tilde-u-0},   the rescaled solution $\tilde u_\ve$    converges to zero as $\tau\to\infty$. This  implies that  
  $f_{\lambda}$ is not asymptotically stable with respect  to small perturbations.  
  
   \item  Theorem \ref{thm-uniform-convergence-tilde-u-0} is a new phenomenon of   the vanishing    behavior of the rescaled solutions    
   even in the particular case  $3\leq n< 6$, $m=\frac{n-2}{n+2}\,$  and $\beta_0(m)<\beta<\beta_1(m)$, which   is related  the conformally flat Yamabe flow; see \cite{DKS}.

\item  In light of the integrability result \eqref{eq-(B)-integ}  for  the case \eqref{eq-cond-non-mono}, 
   the condition \eqref{eq-initial-soliton-L1}   is equivalent to   $u_0-U_{\lambda}\in L^1(\R^n)$ for any $\lambda>0$.  

\end{enumerate}
\end{remark}

The rest of the paper is organized as follows. Section \ref{sec-profiles} is devoted to  the proofs of  Theorem  \ref{thm-self-sol-2nd-asymp} and Theorem \ref{thm-self-sol-2nd-asymp-mono}.    In Section \ref{sec-exist}, we establish  the existence of solution to  \eqref{eq-fde} in  Theorem \ref{thm-existence-fde}. In  Section \ref{sec-extinction}, we prove Theorem \ref{thm-uniform-convergence-tilde-u-compact} and  Theorem \ref{thm-uniform-convergence-tilde-u-0}  regarding  asymptotic results near the extinction time. 
 
\begin{notation}
Let us summarize the definitions and notations that are used in the paper.
\begin{itemize}
\item For any $0 \leq u_0\in L^1_{loc}(\R^n)$, we say that $u$ is a solution to the Cauchy problem \eqref{eq-fde} in $\R^n\times(0,T)$ if $u > 0$ in $\R^n\times(0,T) $  and $u$ is a classical solution of \eqref{fast-diff-eqn}      satisfying
 $$\lim_{\,t\to 0}\, \left\| u(\cdot, t)-u_0\right\|_{L^1(K)} =0 $$ for any compact set $ K\subset \R^n$.  

\item For   $x_0 \in \R^n$ and $R>0, $  we let 
 $B_R(x_0)=\left\{x\in\R^n \,: \,|x-x_0|<R\right\}$ and $B_R = B_R(0). $
 
 \item When there is no ambiguity, we will drop the dependence on $m$ and write $\beta_0$, $\beta_1$  for  $\beta_0(m)$, $\beta_1(m)$, etc.
\end{itemize}
\end{notation}

\section{Second-order asymptotics of self-similar profiles}\label{sec-profiles}


In this section, we will study the self-similar profile $f_{\lambda}$ which is the radially symmetric solution of \eqref{eq-ellip} for any $\lambda>0$. In particular we will prove  the second-order asymptotics of the self-similar profile $f_{\lambda}(r)$ as $r=|y|\to\infty$.

 \subsection{Existence   of self-similar profiles}


Firstly we recall  some results  of  \cite{Hsu1,Hsu3} on the existence, uniqueness  and the first-order asymptotics of the self-similar profiles which are   radially symmetric  solutions to the nonlinear elliptic problem \eqref{eq-ellip}. 

\begin{thm}[\cite{Hsu1,Hsu3}]\label{thm-cigar-soliton-existence}
Let  $n\geq 3$, $0<m< \frac{n-2}{n}$ and let $\alpha$, $\beta$ satisfy \eqref{eq-beta-exist}. For any  $\lambda>0$,  
  there exists a unique solution  $f_{\lambda}$ to the second-order ordinary differential equation   
\begin{equation}\label{eq-ode-ellip-initial}
\left\{\begin{aligned}
&(f^m)''+\frac{n-1}{r}(f^m)'+\alpha f+\beta  rf'=0,\quad f>0\qquad\hbox{in $ \,\,(0,\infty)$},\\
&f(0)=\lambda^{\frac{2}{1-m}},\qquad f'(0)=0.
\end{aligned}\right.
\end{equation}  
Moreover $f_{\lambda}$ satisfies \eqref{eq-1st-asymp-ellip} and
$$
\begin{aligned}
f'(r)<0\qquad&\forall r>0;\\
\alpha f(r)+\beta rf'(r)>0\qquad&\forall r\ge 0;\\
\left(r^{n-1}(f^m)'\right)'<0\qquad&\forall r\ge 0.
\end{aligned}
$$
\end{thm}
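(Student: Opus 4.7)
The plan is to establish local existence and uniqueness near the singular point $r=0$, then extend globally while simultaneously verifying the sign and monotonicity claims, and finally obtain the first-order asymptotic \eqref{eq-1st-asymp-ellip} via a phase-plane analysis.

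For local existence, I would integrate the ODE twice, using the identity $(r^{n-1}(f^m)')'=-r^{n-1}(\alpha f+\beta r f')$, to obtain the Volterra-type integral equation
\[
f^m(r)=\lambda^{\frac{2m}{1-m}}-\int_0^r s^{1-n}\int_0^s t^{n-1}\bigl(\alpha f(t)+\beta t f'(t)\bigr)\,dt\,ds,
\]
where integration by parts in the inner integral eliminates $f'$ in favor of $f$. A Banach fixed-point argument on a small interval $[0,r_0)$ equipped with the sup-norm on $f^m$ yields a unique $C^2$ solution, and $f'(0)=0$ emerges automatically from the fact that the $r^{n-1}$ weight absorbs the singular term in the original equation.

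To extend the solution globally and verify the three sign conditions, set $u(r):=r^{n-1}(f^m)'(r)$ and $\psi(r):=\alpha f(r)+\beta r f'(r)$, so that $u'(r)=-r^{n-1}\psi(r)$. Since $\psi(0)=\alpha\lambda^{2/(1-m)}>0$, $u$ is strictly decreasing near $r=0$, and $u(0)=0$ forces $(f^m)'<0$, hence $f'<0$ on the maximal interval $(0,r_*)$. To propagate $\psi>0$ throughout $(0,r_*)$ I would differentiate $\psi$, use \eqref{eq-ode-ellip-initial} to eliminate $(f^m)''$, and derive an ODI of the form $\psi'+a(r)\psi\geq 0$ with an integrable coefficient; this is the Pohozaev-type identity used in \cite{Hsu1} and it relies crucially on the relation $\alpha=(2\beta+1)/(1-m)$. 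Combined with $f'<0$, this prevents $f$ from reaching zero or blowing up at any finite $r$, so $r_*=\infty$.

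The main obstacle is the first-order asymptotic. I would employ the phase-plane transformation $\tau=\log r$,
\[
X(\tau):=r^{\frac{2}{1-m}}f(r),\qquad Y(\tau):=X'(\tau),
\]
which recasts \eqref{eq-ode-ellip-initial} as an autonomous planar system whose unique interior equilibrium is $\bigl(C_*^{1/(1-m)},0\bigr)$, corresponding to the singular solution $\cC$ in \eqref{eq-def-tildeC}. The monotonicity $f'<0$ together with $\psi>0$ confines the trajectory to a compact subset of the positive quadrant, and a Lyapunov/LaSalle argument forces $(X(\tau),Y(\tau))\to\bigl(C_*^{1/(1-m)},0\bigr)$ as $\tau\to\infty$, which is equivalent to \eqref{eq-1st-asymp-ellip}. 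The delicate step is constructing a strict Lyapunov function for the transformed system throughout the range $\beta\geq\beta_e(m)$ and ruling out heteroclinic orbits to zero or to infinity; this is precisely the content of \cite{Hsu3}, which I would invoke rather than reprove.
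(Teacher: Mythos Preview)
The paper does not prove this theorem at all: Theorem~\ref{thm-cigar-soliton-existence} is stated as a recalled result, attributed entirely to \cite{Hsu1,Hsu3}, and the surrounding text reads ``Firstly we recall some results of \cite{Hsu1,Hsu3} on the existence, uniqueness and the first-order asymptotics of the self-similar profiles.'' There is nothing to compare your proposal against in this paper.

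That said, your outline is a reasonable sketch of how the cited references proceed, and you already acknowledge that the asymptotic step would be invoked from \cite{Hsu3} rather than reproved. One caution on the middle step: your claimed Gronwall-type inequality $\psi'+a(r)\psi\ge 0$ for $\psi=\alpha f+\beta r f'$ is not obviously obtainable in that form. In \cite{Hsu1} the positivity of $\psi$ is instead shown by a first-zero contradiction argument: if $\psi(r_1)=0$ at a first point $r_1>0$, one combines the ODE with the integrated identity $r^{n-1}(f^m)'=-\int_0^r s^{n-1}\psi(s)\,ds$ and the constraint $\alpha=\frac{2\beta+1}{1-m}$ (equivalently $\beta\ge\beta_e(m)$) to force $\psi'(r_1)>0$, contradicting the first-zero assumption. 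This is a pointwise computation at $r_1$ rather than an integrating-factor inequality; you should not expect a clean ODI to emerge, and the range restriction on $\beta$ enters precisely at this sign computation. Otherwise the architecture of your proposal---integral-equation local existence, continuation via the monotone quantities $u=r^{n-1}(f^m)'$ and $\psi$, then a phase-plane/Lyapunov argument in the variable $s=\log r$ for the asymptotic---matches the strategy of the cited sources.
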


\begin{lemma}[Scaling property] \label{lem-scaling} 
Let  $n\geq 3$, $0<m< \frac{n-2}{n}$, $\lambda>0$ and let $\alpha$, $\beta$ satisfy \eqref{eq-beta-exist}. 
Let $f_{\lambda}$ be the unique solution to \eqref{eq-ode-ellip-initial}.   Then    
\begin{equation}\label{f-lambda-f1-eqn}
f_{\lambda}(r)=\lambda^{\frac{2}{1-m}}f_1(\lambda r)\qquad \forall r\geq0,
\end{equation}
where $f_1$ is  the  unique solution to  \eqref{eq-ode-ellip-initial} with $\lambda=1$. 
\end{lemma}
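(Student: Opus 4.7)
The plan is to exploit the uniqueness statement in Theorem \ref{thm-cigar-soliton-existence} and check that the right-hand side of \eqref{f-lambda-f1-eqn} solves the same Cauchy problem \eqref{eq-ode-ellip-initial} as $f_\lambda$. Concretely, I would define
\begin{equation*}
g(r):=\lambda^{\frac{2}{1-m}}f_1(\lambda r),\qquad r\geq 0,
\end{equation*}
and verify two things: (a) $g$ satisfies the initial conditions $g(0)=\lambda^{\frac{2}{1-m}}$ and $g'(0)=0$; (b) $g$ satisfies the ODE in \eqref{eq-ode-ellip-initial}. Uniqueness then forces $g\equiv f_\lambda$.

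The initial conditions are immediate from $f_1(0)=1$ and $f_1'(0)=0$, together with the chain rule. For the ODE, I would substitute $g$ and its derivatives, using the change of variable $s=\lambda r$, and track the powers of $\lambda$. The key algebraic identity that makes the scaling work is
\begin{equation*}
\frac{2m}{1-m}+2 \;=\; \frac{2}{1-m},
\end{equation*}
which equates the exponent of $\lambda$ arising from the diffusion term $(g^m)''+\frac{n-1}{r}(g^m)'$ (namely $\lambda^{\frac{2m}{1-m}+2}$) with the exponent arising from the lower-order terms $\alpha g+\beta r g'$ (namely $\lambda^{\frac{2}{1-m}}$). Once the common factor of $\lambda^{\frac{2}{1-m}}$ is pulled out, the bracketed expression is precisely the ODE in \eqref{eq-ode-ellip-initial} evaluated at $s$ for $f_1$, which vanishes by assumption.

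There is no real obstacle here: the identity \eqref{f-lambda-f1-eqn} is a consistency check that the ODE \eqref{eq-ode-ellip-initial} is invariant under the two-parameter scaling $f(r)\mapsto \mu\,f(\nu r)$ restricted to the one-parameter subgroup $\mu=\nu^{2/(1-m)}$, together with the uniqueness provided by Theorem \ref{thm-cigar-soliton-existence}. The only potentially delicate point is ensuring the uniqueness result applies in the form invoked; since both $g$ and $f_\lambda$ are positive radial solutions of \eqref{eq-ode-ellip-initial} with the same value at $r=0$, Theorem \ref{thm-cigar-soliton-existence} (equivalently, Theorem 1.1 of \cite{Hsu1}) yields $g=f_\lambda$ on $[0,\infty)$, which is exactly \eqref{f-lambda-f1-eqn}.
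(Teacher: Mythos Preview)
Your proposal is correct and follows essentially the same approach as the paper: define the rescaled function, verify it satisfies \eqref{eq-ode-ellip-initial} with the same initial data, and invoke the uniqueness from Theorem \ref{thm-cigar-soliton-existence}. The paper's proof is terser (it omits the explicit power-counting you carry out), but the argument is identical.
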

\begin{proof}
Let
$$
\tilde f(r)=\lambda^{\frac{2}{1-m}}f_1(\lambda r)\qquad \forall r\geq0.
$$
Then $\tilde f$ satisfies  \eqref{eq-ode-ellip-initial}. Since $f_{\lambda}$ is also a solution of \eqref{eq-ode-ellip-initial}, by uniqueness of solutions to  \eqref{eq-ode-ellip-initial} in Theorem \ref{thm-cigar-soliton-existence} the lemma follows.
\end{proof}

\subsection{Second-order asymptotics of self-similar profiles} \label{subsec-2nd-asymp} 
%

In this subsection we will use a modification of the proof of  \cite[Theorem 1.1]{DKS} and Section 3 of  \cite{Hui} to study the second-order asymptotics of the  self-similar profile $f_{\lambda}$.  
 
In light of  the first-order asymptotics \eqref{eq-1st-asymp-ellip}, we define a normalized  function $g_\lambda$ by  
$$
g_\lambda(s):= \left[{C_*}^{-\frac{1}{1-m}}r^{\frac{2}{1-m}}f_\lambda(r)\right]^m\qquad \forall s =\log r,\quad r>0.
$$
Then by \eqref{eq-ode-ellip-initial}, \eqref{eq-1st-asymp-ellip}, and the computation in Section 3 of \cite{Hsu1} and Section 3 of \cite{Hui}, $g_\lambda$  satisfies
\begin{equation}\label{eq-2nd-asymp-q}
\left\{
\begin{aligned}
&g''+\left(\frac{n-2-(n+2)m}{1-m}+\frac{\beta C_*}{m} g^{\frac{1}{m}-1}\right)g'+\frac{2m(n-2-nm)}{(1-m)^2}\left(g^{\frac{1}{m}}-g\right)=0\quad\mbox{in\,\, $\R$;}\\
&  \lim_{s\to-\infty}g (s)=0\quad\hbox{and}\quad \lim_{s\to\infty}g (s)=1.
\end{aligned}\right. 
\end{equation} 
We now  linearize \eqref{eq-2nd-asymp-q} around the constant $1$ by setting
$g_\lambda=1+w_\lambda$. 
Then $w_\lambda $ satisfies ((3.3) of \cite{Hui})
\begin{align}\label{eq-2nd-asymp-w-lin}
\left\{\begin{aligned}
& w'' +\left(\frac{n-2-(n+2)m}{1-m}+\frac{\beta C_*}{m} (1+w)^{\frac{1}{m}-1}\right)w' \\
& \qquad+\frac{2m(n-2-nm)}{(1-m)^2}\left(\,(1+w)^{\frac{1}{m}}-1-w\,\right)=0\qquad  \hbox{ in \,\, $\R$; } \\
& \lim_{s\to-\infty}w(s)=-1,\qquad  \lim_{s\to\infty} w(s)=0.
\end{aligned}\right. 
\end{align}  
Here we note that $w_\lambda>-1$ in $\R$ since $g_\lambda>0$ in $\R$.

We next define    the linearized operator $L$  for  the above equation \eqref{eq-2nd-asymp-w-lin}   around the zero  solution $w\equiv0$, as
\begin{equation*}
L[v]:=v''+ \frac{A_0(m,\beta)}{1-m} \,v'+\frac{2(n-2-nm)}{1-m}\,v
\end{equation*}
where  $A_0(m,\beta)$ is given by \eqref{eq-def-A_0(be)}.
Let 
\begin{equation}\label{phi-defn}
\phi(z):= (1+z)^{\frac{1}{m}}-1-\frac{z}{m}\qquad\forall z>-1,
\end{equation}
$w=w_\lambda$, and   
\begin{equation*}
\Phi(s):=\phi (w(s))\qquad\hbox{ for\,\, $s\in\R$.}
\end{equation*}
Then \eqref{eq-2nd-asymp-w-lin} can be written as
\begin{equation}\label{eq-w-linearized}
L[w]=h\quad \hbox{ in \,$\R$}
\end{equation}
where  
$$
h(s):=-C_*\left(\,{\beta}\, \Phi'(s)+\frac{1}{1-m}\,\Phi(s)\right).
$$   
The characteristic equation associated with the linearized operator $L$ is given by \eqref{eq-char} with characteristic
roots $ \gamma_1(m,\beta)$, $ \gamma_2(m,\beta)$ given by \eqref{eq-roots-gamma-intro}. We now assume that  $\beta>\beta_0(m)$. 
Then  by \eqref{A0>0}, 
\begin{equation}\label{eq-A_0-pos}
A_0(m,\beta)> \sqrt{8(n-2-nm)(1-m)}>0\qq \forall \beta>\beta_0(m),
\end{equation}
 and hence  the two positive roots of \eqref{eq-roots-gamma-intro}   satisfy $\gamma_2(m,\beta)>\gamma_1 (m,\beta)>0$.  

By the computation in \cite[Section 3]{Hui}, it holds that 
\begin{align}\label{eq-w-sol-lin-op}
w(s)&=\frac{1}{\gamma_2(m,\beta)-\gamma_1(m,\beta)}\left(\,e^{-\gamma_1s}\int_{-\infty}^se^{\gamma_1t}h(t)\,dt-e^{-\gamma_2s}\int_{-\infty}^se^{\gamma_2t}h(t)\,dt \,\right)\notag
\\
&=-C_0(m,\beta) \int_{-\infty}^s \left(\, A_1(m,\beta) e^{-\gamma_1 (s-t)}  - A_2(m,\beta) e^{-\gamma_2 (s-t)}  \,\right) \Phi(t)\,dt\qquad\forall s\in\R,
\end{align}
where
$$
\begin{aligned}
C_0(m,\beta)&:=  \frac{C_*}{  \gamma_2(m,\beta)-\gamma_1(m,\beta) }>0
\end{aligned} 
$$
and 
\begin{equation}\label{eq-def-A_j}
A_j(m,\beta):=\frac{1}{1-m}-\beta\,\gamma_j(m,\beta)\qquad\hbox{for \,\,$ j=1,2$}.\\
\end{equation}
Here  for simplicity, we denote   $ \gamma_j= \gamma_j(m,\beta)$ for $ j=1,2$.
  Since $\gamma_2(m,\beta)>\gamma_1(m,\beta)>0$,  we have 
\begin{equation}\label{eq-a1>a2} 
A_1(m,\beta)>A_2(m,\beta)\qquad\forall\beta>\beta_0(m). 
\end{equation} 

Observe that the function $\phi$ given by \eqref{phi-defn} is a strictly convex non-negative function  on $(-1,\infty)$ and $\phi (0)=\phi '(0)=0$. Hence there exist  constants $0<\delta_1<1$ and $c_2>c_1>0$   such  that 
\begin{equation}\label{phi-s2-eqn}
c_1 t^2\leq \phi(t)\leq c_2 t^2\qquad\forall |t|\le\delta_1.
\end{equation}
Then $\Phi(t)\geq 0$ for all $t\in\R$ since $w_\lambda>-1$ in $\R$, and  $\Phi(t)=0 $ if and only if $w(t)=0$. 
Let
\begin{equation}\label{delta2-defn}
\delta_2=\min \left(\delta_1,\frac{\gamma_2-\gamma_1}{2c_2C_*\beta}\right).
\end{equation}
Since    $\displaystyle\lim_{s\to\infty}w(s)=0$, there exists  a constant $s_0>0$  such  that 
\begin{equation}\label{w-small-at-infty}
|w(s)|\le\delta_2\qquad\forall s\ge s_0.
\end{equation}
By \eqref{phi-s2-eqn} and \eqref{w-small-at-infty}, we have 
\begin{equation}\label{eq-Phi-w^2}
c_1 w^2(s)\leq \Phi(s)\leq c_2 w^2(s)\qquad\forall s\ge s_0.
\end{equation}

In order to study the asymptotic behavior of $w$ given by \eqref{eq-w-sol-lin-op}, we  first  investigate the relation between  $\beta_0(m)$ and  $\beta_1(m)$, and the sign of  $A_1(m,\beta)$ and $A_2(m,\beta)$. 
    
\begin{lemma}\label{lem-beta0-beta1}
Let $n\geq3$ and  $0<m<\frac{n-2}{n} $. Then $\beta_1(m)\geq\beta_0(m)$ 
where the  equality  $\beta_1(m)=\beta_0(m)$ holds   if and only if $m=\frac{n-4}{n-2} $ and  $n>4$. 
\end{lemma}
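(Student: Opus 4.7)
The plan is to reduce $\beta_1\ge\beta_0$ to the single inequality $\beta_1\ge\beta_2$ and then to extract the equality case from a perfect-square identity. Since $0<m<1$ implies $\beta_e(m)=\frac{m}{n-2-nm}<\frac{1}{n-2-nm}=\beta_1(m)$, we always have $\beta_1>\beta_e$; hence $\beta_1\ge\beta_0=\max(\beta_2,\beta_e)$ amounts to $\beta_1\ge\beta_2$, and a hypothetical equality $\beta_1=\beta_0$ forces $\beta_0=\beta_2$ (otherwise $\beta_0=\beta_e$ would yield $\beta_1=\beta_e$, a contradiction).

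Next I would compute directly
$$\beta_1(m)-\beta_2(m)=\frac{n-(n+2)m}{2(n-2-nm)}-\sqrt{\frac{2(1-m)}{n-2-nm}}.$$
Since $m<\frac{n-2}{n}<\frac{n}{n+2}$, the first term is strictly positive, so after squaring, the inequality $\beta_1\ge\beta_2$ is equivalent to $\bigl(n-(n+2)m\bigr)^2\ge 8(n-2-nm)(1-m)$. The crucial algebraic observation, which is essentially the whole content of the lemma, is the perfect-square identity
$$\bigl(n-(n+2)m\bigr)^2-8(n-2-nm)(1-m)=\bigl[(n-4)-(n-2)m\bigr]^2,$$
verified by matching coefficients of $1$, $m$, $m^2$ on both sides (giving $(n-4)^2$, $-2(n-2)(n-4)$, and $(n-2)^2$ respectively).

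This identity yields $\beta_1\ge\beta_2$ unconditionally, with equality iff $(n-4)-(n-2)m=0$, i.e.\ $m=\frac{n-4}{n-2}$; the range constraint $0<m<\frac{n-2}{n}$ (which is automatic from $\frac{n-4}{n-2}<\frac{n-2}{n}$) then forces $n>4$. To close the equality case I would substitute back at $m=\frac{n-4}{n-2}$: one computes $n-2-nm=\frac{4}{n-2}$, whence $\beta_2=\beta_1=\frac{n-2}{4}>\frac{n-4}{4}=\beta_e$, confirming $\beta_0=\beta_2=\beta_1$ as required. No step presents a real obstacle; the only non-routine move is spotting the perfect-square factorization, which makes both the inequality and its equality condition transparent.
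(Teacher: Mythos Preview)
Your proof is correct and follows essentially the same approach as the paper: reduce to $\beta_1\ge\beta_2$ via $\beta_1>\beta_e$, then use the perfect-square identity $(n-nm-2m)^2-8(1-m)(n-2-nm)=[n-4-(n-2)m]^2$ to obtain the inequality and pin down the equality case. Your explicit verification that $\beta_2>\beta_e$ at $m=\frac{n-4}{n-2}$ is a harmless redundancy (it follows from $\beta_2=\beta_1>\beta_e$), but otherwise the argument matches the paper's.
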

\begin{proof}
By direct computation,  we have
\begin{align*}
2(n-2-nm)(\beta_1(m)-\beta_2(m))
&= {(n -nm-2m)- \sqrt{8(1-m)(n-2-nm)}}.
\end{align*}
Since
\begin{equation}\label{n-m-eqn}
\left(n -nm-2m \right)^2 - 8(1-m)(n-2-nm)=\left[n-4-(n-2)m\right]^2\geq0,
\end{equation}
it follows that $\beta_1(m)\geq \beta_2(m)$, and the equality holds if and only if  $m=\frac{n-4}{n-2}$ and $n>4$. Since $\beta_1(m)>\beta_e(m)$ and $\beta_0(m)=\max(\beta_2(m),\beta_e(m))$, the lemma follows.  
\end{proof}
    
\begin{lemma} \label{lem-A1-0}
Let  $n\geq3$ and  $ 0< m <\frac{n-2}{n} $.     Then the following  holds. 

\begin{enumerate}[(a)]
\item If 
\begin{equation}\label{A1-eqn1}
A_1(m,\beta)=0\quad\mbox{ for some }\beta\ge\beta_0(m),
\end{equation}
 then  $ \beta=\beta_1(m) $ and $m\ge\frac{n-4}{n-2}$.
\item $A_1\left(m,\beta_1(m)\right)>0$ and  $A_2\left(m,\beta_1(m)\right)=0$     if $0< m< \frac{n-4}{n-2}$ and $n>4$.
\item $A_1\left(m,\beta_1(m)\right)=0$ and $A_2\left(m,\beta_1(m)\right)=0$  if  $ m= \frac{n-4}{n-2} $ and $n>4$.
 
\item $A_1\left(m,\beta_1(m)\right)=0$ and $A_2\left(m,\beta_1(m)\right)<0$  if  $ \max\left(\frac{n-4}{n-2}, 0\right)< m<\frac{n-2}{n}$.
\end{enumerate}

\end{lemma}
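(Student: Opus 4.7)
The plan is to reduce the condition $A_j(m,\beta)=0$ to a simple quadratic in $\beta$ by exploiting Vieta's formulas for the characteristic equation \eqref{eq-char}. The crucial observation is that $A_j(m,\beta)=0$ is equivalent to $\gamma_j=\tfrac{1}{\beta(1-m)}$, i.e.\ to saying that $\tfrac{1}{\beta(1-m)}$ itself solves \eqref{eq-char}. Expanding
\begin{equation*}
A_1(m,\beta)\,A_2(m,\beta)=\Big(\tfrac{1}{1-m}-\beta\gamma_1\Big)\Big(\tfrac{1}{1-m}-\beta\gamma_2\Big)
\end{equation*}
and substituting $\gamma_1+\gamma_2=\tfrac{A_0(m,\beta)}{1-m}$, $\gamma_1\gamma_2=\tfrac{2(n-2-nm)}{1-m}$ together with \eqref{eq-def-A_0(be)} yields the compact identity
\begin{equation*}
A_1(m,\beta)\,A_2(m,\beta)=-\frac{P(\beta)}{(1-m)^2},\qquad P(\beta):=2m(n-2-nm)\beta^2+\bigl(n-2-(n+2)m\bigr)\beta-1.
\end{equation*}

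I would then factor $P$ explicitly. A short computation of its discriminant reveals the clean identity
\begin{equation*}
\bigl(n-2-(n+2)m\bigr)^2+8m(n-2-nm)=(n-2)^2(1-m)^2,
\end{equation*}
so that $P(\beta)=2m(n-2-nm)\bigl(\beta-\beta_1(m)\bigr)\bigl(\beta+\tfrac{1}{2m}\bigr)$. Since the hypothesis $\beta\geq\beta_0(m)>0$ rules out the negative root $-\tfrac{1}{2m}$, the product $A_1 A_2$ can vanish only at $\beta=\beta_1(m)$. This already establishes the ``$\beta=\beta_1(m)$'' half of (a).

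Finally I would determine which of $A_1,A_2$ is zero at $\beta=\beta_1$. Evaluation gives $A_0(m,\beta_1)=n-(n+2)m$, so by Vieta the two roots of \eqref{eq-char} are $\gamma^\ast:=\tfrac{n-2-nm}{1-m}$ and $\gamma^{\ast\ast}:=2$. Since $\beta_1\gamma^\ast=\tfrac{1}{1-m}$, it is precisely the root equal to $\gamma^\ast$ that corresponds to a vanishing $A_j$, and $\gamma^\ast<2$ iff $m>\tfrac{n-4}{n-2}$. Splitting into cases: for $m>\tfrac{n-4}{n-2}$ one has $\gamma_1=\gamma^\ast$ and $\gamma_2=2$, hence $A_1=0$ and
\begin{equation*}
A_2=\tfrac{1}{1-m}-2\beta_1=\frac{n-4-(n-2)m}{(1-m)(n-2-nm)}<0,
\end{equation*}
proving (d); for $m=\tfrac{n-4}{n-2}$ (forcing $n>4$) the two roots coincide at $2$ and $A_1=A_2=0$, proving (c); for $0<m<\tfrac{n-4}{n-2}$ (forcing $n>4$) one has $\gamma_1=2$ and $\gamma_2=\gamma^\ast$, so $A_2=0$ while $A_1=\tfrac{n-4-(n-2)m}{(1-m)(n-2-nm)}>0$, proving (b) and showing $A_1(m,\beta_1)\ne 0$ in this regime, which supplies the missing ``$m\geq\tfrac{n-4}{n-2}$'' half of (a). The only delicate step is recognising that the discriminant of $P$ factors as a perfect square; once that identity is in hand the rest is direct algebraic bookkeeping.
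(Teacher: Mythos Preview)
Your proof is correct and takes a genuinely different route from the paper's. The paper attacks (a) head-on by writing $A_1=0$ as $A_0(m,\beta)-\tfrac{2}{\beta}=\sqrt{A_0(m,\beta)^2-8(n-2-nm)(1-m)}$, squaring, and simplifying to the factored form $4\bigl(\tfrac{1}{\beta}-(n-2-nm)\bigr)\bigl(\tfrac{1}{\beta}+2m\bigr)=0$; it then checks separately that the sign constraint on the square root forces $m\ge\tfrac{n-4}{n-2}$. For (b)--(d) the paper plugs $A_0(m,\beta_1)=n-nm-2m$ into the explicit root formula \eqref{eq-roots-gamma-intro} and reduces each $A_j(m,\beta_1)$ to $n-4-(n-2)m\mp|n-4-(n-2)m|$.

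Your approach sidesteps the square roots entirely by passing to the symmetric product $A_1A_2$ via Vieta, obtaining the quadratic $P(\beta)$, and factoring it through the perfect-square discriminant identity $(n-2-(n+2)m)^2+8m(n-2-nm)=(n-2)^2(1-m)^2$. This is cleaner: the factorization $P(\beta)=2m(n-2-nm)(\beta-\beta_1)(\beta+\tfrac{1}{2m})$ is exactly the paper's identity rewritten in $\beta$ rather than $1/\beta$, but you reach it without ever isolating a radical. Your treatment of (b)--(d) by recognising the two characteristic roots at $\beta_1$ as $\gamma^\ast=\tfrac{n-2-nm}{1-m}$ and $2$ (checked by Vieta) and then ordering them is likewise more transparent than the paper's absolute-value manipulation, and it lets you read off the $m\ge\tfrac{n-4}{n-2}$ half of (a) directly from (b). Both arguments yield the same computations in the end, but yours is better organised around symmetric functions.
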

\begin{proof}
Firstly, we show the statement (a). Note that by \eqref{eq-roots-gamma-intro} and \eqref{eq-def-A_j}, \eqref{A1-eqn1} is equivalent to 
\begin{equation}\label{A0-eqn1}
A_0(m,\beta)-\frac{2}{\beta}=\sqrt{A_0(m,\beta)^2-8(n-2-nm)(1-m)}\ge 0.
\end{equation}
Since 
\begin{align*}
&\left(A_0(m,\beta)-\frac{2}{\beta}\right)^2-A_0(m,\beta)^2+8(n-2-nm)(1-m)\\
=&4 \left(\frac{1}{\beta}-(n-2-nm)\right)\left(\frac{1}{\beta}+2m\right),
\end{align*}
\eqref{A0-eqn1} implies that   $\beta=\beta_1(m)$.  In light of \eqref{A0-eqn1},  we have
\begin{equation*}
A_0(m,\beta_1(m))-\frac{2}{\beta_1(m)}
=(n-2)m-(n-4)\ge 0,
\end{equation*}
and hence   $m\ge\frac{n-4}{n-2}$. Thus   (a) follows.

Since $A_0(m,\beta_1(m))=n-nm-2m$, by \eqref{eq-roots-gamma-intro}, \eqref{eq-def-A_j}, \eqref{n-m-eqn}, and a direct computation,  we have 
\begin{align*}
 \frac{2(1-m)A_j(m,\beta_1(m))}{\beta_1(m)}
&=  2(n-2-nm) - (n-nm-2m)\notag\\
&\quad   -(-1)^j\sqrt{(n-nm-2m)^2-8(n-2-nm)(1-m)} \\
&= n-4-(n-2)m-(-1)^j\left|n-4-(n-2)m\right|\qquad\forall j=1,2.
\end{align*}
Hence 
$$
\frac{2(1-m)A_1(m,\beta_1(m))}{\beta_1(m)}=\left\{
\begin{aligned}
&2\big\{(n-4)-(n-2)m\big\}>0\,\quad \hbox{if }\,\,0<m< \frac{n-4}{n-2}\,\mbox{ and }n>4,\\
&0\qquad\qquad\qquad\qquad\qquad\qquad \hbox{if }\,\,\frac{n-4}{n-2} \le m<\frac{n-2}{n},
\end{aligned}\right.
$$
and
$$
\frac{2(1-m)A_2(m,\beta_1(m))}{\beta_1(m)}=\left\{
\begin{aligned}
&0\qquad\qquad\qquad\qquad\qquad\qquad \hbox{if }\,\,0<m\le\frac{n-4}{n-2}\,\mbox{ and }n>4,\\
&2\big\{(n-4)-(n-2)m\big\}<0\,\quad \hbox{if }\,\, \frac{n-4}{n-2}< m<\frac{n-2}{n}.
\end{aligned}\right.
$$
Therefore (b), (c) and (d) follow.
\end{proof}

\begin{lemma} \label{lem-A1-A2-property}
Let  $n\geq3$ and $ 0< m <\frac{n-2}{n}$.   Then the following  holds. 

\begin{enumerate}[(a)]

\item   $A_1(m,\beta) $ is strictly increasing with respect to $\beta > \beta_0(m) $ if $ \frac{n-2}{n+2}\leq m <\frac{n-2}{n} $.  

\item  $A_2(m,\beta)$ is strictly decreasing with respect to $\beta >\beta_0(m) $  if $ \frac{n-2}{n+2}\leq m <\frac{n-2}{n} $.  

\item $A_1(m,\beta)>0$  if $ \frac{n-2}{n+2}\leq m <\frac{n-2}{n} $ and $\beta>\beta_1(m)$.

\item $A_1(m,\beta)<0$   if $\max\left( \frac{n-4}{n-2},\frac{n-2}{n+2}\right) \leq  m <\frac{n-2}{n} $ and $\beta_0(m)<\beta<\beta_1(m)$.

\end{enumerate}
\end{lemma}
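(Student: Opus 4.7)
The plan is to reduce everything to the monotonicity of $A_j(m,\cdot)$ computed via implicit differentiation of the characteristic equation \eqref{eq-char}, then combine with Lemma \ref{lem-A1-0} to handle (c) and (d).

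\medskip

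\textbf{Step 1 (implicit differentiation).} With $m$ fixed and $\beta>\beta_0(m)$, differentiate \eqref{eq-char} in $\beta$, using that $\partial_\beta A_0(m,\beta)=2(n-2-nm)>0$. This gives
$$
\partial_\beta\gamma_j=\frac{2(n-2-nm)\,\gamma_j}{2(1-m)\gamma_j-A_0(m,\beta)}=\frac{(-1)^j\,2(n-2-nm)\,\gamma_j}{\sqrt{A_0(m,\beta)^2-8(n-2-nm)(1-m)}},
$$
where in the last step I use the formula \eqref{eq-roots-gamma-intro} for $\gamma_j$. Since $A_j=\tfrac{1}{1-m}-\beta\gamma_j$, I then obtain
$$
\partial_\beta A_j=-\gamma_j-\beta\,\partial_\beta\gamma_j=-\gamma_j\left(1+\frac{(-1)^j\,2\beta(n-2-nm)}{\sqrt{A_0(m,\beta)^2-8(n-2-nm)(1-m)}}\right).
$$

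\textbf{Step 2 (proof of (b)).} For $j=2$ both terms in the parenthesis are positive, so $\partial_\beta A_2<0$ for every $\beta>\beta_0(m)$. This proves (b) (indeed in full generality, not only for $m\ge\tfrac{n-2}{n+2}$).

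\textbf{Step 3 (proof of (a)).} For $j=1$, $\partial_\beta A_1>0$ is equivalent to
$2\beta(n-2-nm)>\sqrt{A_0(m,\beta)^2-8(n-2-nm)(1-m)}$, which (both sides positive) upon squaring and expanding with $B:=n-2-(n+2)m$ and $C:=n-2-nm$ reduces to
$$
8C(1-m)-B\bigl(B+4\beta C\bigr)>0.
$$
Now the hypothesis $m\ge\tfrac{n-2}{n+2}$ gives $B\le0$. Moreover, using the definition of $\beta_2(m)$ one computes $A_0(m,\beta_2(m))=2\sqrt{2(1-m)C}$, from which $B+4\beta_2(m)C=-B+4\sqrt{2(1-m)C}\ge 4\sqrt{2(1-m)C}>0$. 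Since $\beta\ge\beta_0(m)\ge\beta_2(m)$ and $C>0$, it follows that $B+4\beta C>0$, hence $B(B+4\beta C)\le 0<8C(1-m)$, which is the desired inequality. This proves (a).

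\textbf{Step 4 (proofs of (c) and (d)).} These now follow from Step 3 and Lemma \ref{lem-A1-0}. For (c) with $\tfrac{n-2}{n+2}\le m<\tfrac{n-2}{n}$ and $\beta>\beta_1(m)$: if $m\ge\tfrac{n-4}{n-2}$, parts (c)--(d) of Lemma \ref{lem-A1-0} give $A_1(m,\beta_1(m))=0$, while if $0<m<\tfrac{n-4}{n-2}$ (possible only for $n>6$) part (b) gives $A_1(m,\beta_1(m))>0$; in either case strict monotonicity from (a) yields $A_1(m,\beta)>A_1(m,\beta_1(m))\ge 0$. For (d), when $m=\tfrac{n-4}{n-2}$ Lemma \ref{lem-beta0-beta1} gives $\beta_0(m)=\beta_1(m)$ so the range of $\beta$ is empty; when $m>\tfrac{n-4}{n-2}$, part (d) of Lemma \ref{lem-A1-0} gives $A_1(m,\beta_1(m))=0$, and by (a) one has $A_1(m,\beta)<0$ for $\beta_0(m)<\beta<\beta_1(m)$.

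\medskip

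The only genuinely nontrivial step is Step 3: the sign of $B+4\beta C$ on $\{\beta\ge\beta_0(m)\}$ when $B\le 0$. Everything else is algebraic bookkeeping together with repeated use of the fact that the discriminant $A_0^2-8(n-2-nm)(1-m)$ is positive on $\{\beta>\beta_0(m)\}$ (cf.\ \eqref{A0>0}).
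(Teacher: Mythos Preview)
Your proof is correct and follows essentially the same strategy as the paper: compute $\partial_\beta A_j$, determine its sign under the hypothesis $m\ge\tfrac{n-2}{n+2}$, and then combine with Lemma~\ref{lem-A1-0} for (c) and (d). The only difference is cosmetic: you obtain $\partial_\beta\gamma_j$ by implicit differentiation of the characteristic equation \eqref{eq-char}, whereas the paper differentiates the explicit formula \eqref{eq-roots-gamma-intro} directly and then rewrites the result in the factored form \eqref{Ai-derivative-ineqn}; your reduction of the sign condition for $\partial_\beta A_1$ to $8C(1-m)>B(B+4\beta C)$ with $B\le 0$ is algebraically equivalent to the paper's observation that both factors in \eqref{Ai-derivative-ineqn} (for $i=1$) are negative when $(n-2)-(n+2)m\le 0$.
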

\begin{proof}
By direct computation for any $i=1,2$,
\begin{align}\label{Ai-derivative-ineqn}
&2(1-m)\frac{\1}{\1\beta}A_i(m,\beta)\notag\\
=&-2(1-m)\gamma_i(m,\beta)-2(1-m)\beta\frac{\1}{\1\beta}\gamma_i(m,\beta)\notag\\
=&-2(1-m)\gamma_i(m,\beta)-\beta\left(1+\frac{(-1)^i A_0(m,\beta)}{\sqrt{A_0(m,\beta)^2-8(n-2-nm)(1-m)}} \right)\frac{\1}{\1\beta}A_0(m,\beta)\notag\\
=&\left(\sqrt{A_0(m,\beta)^2-8(n-2-nm)(1-m)} +(-1)^i A_0(m,\beta)\right)\cdot\notag\\
&\qquad\cdot\left((-1)^{i+1}-\frac{\beta \frac{\1}{\1\beta}A_0(m,\beta)}{\sqrt{A_0(m,\beta)^2-8(n-2-nm)(1-m)}} \right)\notag\\
=&\frac{\sqrt{A_0(m,\beta)^2-8(n-2-nm)(1-m)} +(-1)^i A_0(m,\beta) }{\sqrt{A_0(m,\beta)^2-8(n-2-nm)(1-m)}}\cdot\notag\\
&\qquad \cdot\left((-1)^{i+1}{\sqrt{A_0(m,\beta)^2-8(n-2-nm)(1-m)}} - 2\beta(n-2-nm)\right)\notag\\
=&\frac{\sqrt{A_0(m,\beta)^2-8(n-2-nm)(1-m)}+(-1)^i A_0(m,\beta) }{\sqrt{A_0(m,\beta)^2-8(n-2-nm)(1-m)}}\cdot\notag\\
&\quad\cdot\left({(-1)^{i+1}\sqrt{A_0(m,\beta)^2-8(n-2-nm)(1-m)}} -A_0(m,\beta)+(n-2)-(n+2)m\right).
\end{align}
Hence if $\frac{n-2}{n+2}\leq m <\frac{n-2}{n}$, then by \eqref{eq-A_0-pos} and \eqref{Ai-derivative-ineqn},
\begin{equation*}
\frac{\1}{\1\beta}A_1(m,\beta)>0>\frac{\1}{\1\beta}A_2(m,\beta)\qquad\forall \beta>\beta_0(m).
\end{equation*}
Thus (a) and  (b) follow. Utilizing  Lemma \ref{lem-A1-0} and (a), we get (c) and (d) and the lemma follows. 
\end{proof}

Based on Lemmas \ref{lem-A1-0} and \ref{lem-A1-A2-property}, we summarize the sign of $A_1=A_1(m,\beta)$  with respect to   $0<m<\frac{n-2}{n}$, $\beta>\beta_0(m)$, as follows, which plays a key role  in the asymptotic analysis of $w=w_\lambda$ near $s=\infty$. 

\begin{cor}\label{cor-sign-A12}
Let  $n\geq3$ and $ 0< m <\frac{n-2}{n}$.  Then the following  holds.
\begin{enumerate}[(a)]
\item $A_1(m,\beta) >0$ if \eqref{eq-cond-mono} holds.

\item $A_1(m,\beta) <0$ if \eqref{eq-cond-non-mono} holds. 
 
\item $A_1(m,\beta) =0$ and $A_2(m,\beta)<0$ if  \eqref{eq-cond-mono-beta1} holds.
 
\end{enumerate}
\end{cor}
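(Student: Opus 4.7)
I will combine Lemma~\ref{lem-A1-0}, which pins down $\beta_1(m)$ as the unique positive zero of $A_1(m,\cdot)$ and records the sign of $A_2(m,\beta_1)$, with a brief continuity/sign-tracking argument centered at $\beta=\beta_1(m)$. The key algebraic identity is obtained by setting $\gamma^*(\beta):=\frac{1}{(1-m)\beta}$, so that $A_j(m,\beta)=\beta\bigl(\gamma^*(\beta)-\gamma_j(m,\beta)\bigr)$ for $j=1,2$, and then evaluating the characteristic polynomial $P(\gamma):=(1-m)\gamma^2-A_0(m,\beta)\gamma+2(n-2-nm)$ at $\gamma^*$. A direct substitution of \eqref{eq-def-A_0(be)} and simplification yield
\[
P(\gamma^*(\beta))=\frac{-2m(n-2-nm)}{(1-m)\beta^2}\bigl(\beta-\beta_1(m)\bigr)\Bigl(\beta+\tfrac{1}{2m}\Bigr),
\]
so $P(\gamma^*(\beta))<0$ (equivalently $\gamma_1<\gamma^*<\gamma_2$) precisely when $\beta>\beta_1(m)$, whereas for $\beta_0(m)<\beta<\beta_1(m)$ we have $P(\gamma^*)>0$ and $\gamma^*$ must lie in $(0,\gamma_1)\cup(\gamma_2,\infty)$.

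When $\beta>\beta_1(m)$ the sandwich $\gamma_1<\gamma^*<\gamma_2$ immediately yields $A_1>0$ and $A_2<0$, which settles case~(i) of \eqref{eq-cond-mono}. For the remaining interval $\beta_0(m)<\beta<\beta_1(m)$ I must decide whether $\gamma^*<\gamma_1$ or $\gamma^*>\gamma_2$; by Lemma~\ref{lem-A1-0}(a) the continuous functions $\gamma^*-\gamma_1$ and $\gamma^*-\gamma_2$ vanish nowhere on this open interval, so each has a constant sign that can be read off from the boundary value at $\beta=\beta_1(m)$. Under \eqref{eq-cond-non-mono} we have $m>\max\{\tfrac{n-4}{n-2},0\}$, and Lemma~\ref{lem-A1-0}(d) gives $\gamma^*(\beta_1)=\gamma_1(\beta_1)<\gamma_2(\beta_1)$; the constant-sign property then forces $\gamma^*<\gamma_2$ throughout, and combined with $\gamma^*\notin(\gamma_1,\gamma_2)$ this yields $\gamma^*<\gamma_1$, i.e.\ $A_1<0$. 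Under case~(ii) of \eqref{eq-cond-mono} the condition $0<m<\tfrac{n-4}{n-2}$ reverses the picture via Lemma~\ref{lem-A1-0}(b): now $\gamma^*(\beta_1)=\gamma_2(\beta_1)>\gamma_1(\beta_1)$, and the same continuity argument gives $\gamma^*>\gamma_2$ and hence $A_1>0$ on the entire interval, including the endpoint $\beta_1(m)$.

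Part~(c) is then the boundary case $\beta=\beta_1(m)$ with $m>\max\{\tfrac{n-4}{n-2},0\}$, which is read directly off Lemma~\ref{lem-A1-0}(d). The only piece of genuine computation is the one-line factorization of $P(\gamma^*(\beta))$ displayed above; confirming that $\beta_1(m)=1/(n-2-nm)$ and $-1/(2m)$ are indeed the two roots of the resulting quadratic in $\beta$ is the sole step not already prepackaged in Lemma~\ref{lem-A1-0}, and I expect this brief algebraic check to be the only obstacle worth noting.
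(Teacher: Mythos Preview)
Your argument is correct and in fact cleaner than the paper's. The paper proves Lemma~\ref{lem-A1-A2-property}, a monotonicity statement for $A_1$ and $A_2$ valid only in the subrange $m\ge\tfrac{n-2}{n+2}$, and then extends to the full range $0<m<\tfrac{n-2}{n}$ by invoking continuity of $A_1$ on the two-dimensional connected region $\{(m,\beta):0<m<\tfrac{n-2}{n},\ \beta>\beta_0(m)\}$ together with Lemma~\ref{lem-A1-0}(a) and the intermediate value theorem. Your route bypasses Lemma~\ref{lem-A1-A2-property} entirely: the factorization $P(\gamma^*(\beta))=\tfrac{-2m(n-2-nm)}{(1-m)\beta^2}(\beta-\beta_1)(\beta+\tfrac{1}{2m})$ (which I have checked) immediately tells you whether $\gamma^*$ lies between the roots $\gamma_1<\gamma_2$ or outside, and the only remaining work is a one-dimensional continuity argument in $\beta$ with the boundary data at $\beta=\beta_1$ supplied by Lemma~\ref{lem-A1-0}(b)--(d). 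This is both shorter and avoids the somewhat opaque $\partial A_i/\partial\beta$ computation in \eqref{Ai-derivative-ineqn}.

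One small point: when you write ``by Lemma~\ref{lem-A1-0}(a) the continuous functions $\gamma^*-\gamma_1$ and $\gamma^*-\gamma_2$ vanish nowhere on this open interval,'' Lemma~\ref{lem-A1-0}(a) only directly handles $\gamma^*-\gamma_1$. The non-vanishing of $\gamma^*-\gamma_2$ on $(\beta_0,\beta_1)$ follows instead from your own identity $P(\gamma^*)=(1-m)(\gamma^*-\gamma_1)(\gamma^*-\gamma_2)>0$. This does not affect the validity of the argument, but you should adjust the citation.
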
 

\begin{proof}
By  Lemmas \ref{lem-beta0-beta1}, \ref{lem-A1-0} and   \ref{lem-A1-A2-property}, the continuity of  $A_1(m,\beta)$  on the connected domain 
$$
\left\{(m,\beta):0<m<\frac{n-2}{n}, \beta>\beta_0(m)\right\}
$$
and the intermediate value theorem, we deduce that 
\begin{equation}\label{A1-positive}
A_1(m,\beta)>0\quad\mbox{ for any 
$\left\{\begin{aligned}
&0< m<\frac{n-2}{n}\\
&\beta>\beta_1(m)
\end{aligned}\right. 
\,\,\mbox{and}\,\,\left\{\begin{aligned}
&0<m<\frac{n-4}{n-2},\,\,n>4\\
&\beta_0(m)< \beta\leq \beta_1(m),
\end{aligned}\right.$}
\end{equation}
and
\begin{equation}\label{A1-positive2}
 A_1(m,\beta)<0\quad\mbox{ for any $\max\left(0,\frac{n-4}{n-2}\right)<m<\frac{n-2}{n}, \,\,\beta_0(m)<\beta<\beta_1(m)$}.
\end{equation}
By (d) of Lemma \ref{lem-A1-0},   \eqref{A1-positive} and \eqref{A1-positive2}, the corollary follows.
\end{proof}

By \eqref{eq-w-sol-lin-op},  Corollary \ref{cor-sign-A12},   and  an argument similar to the proof of Lemma 3.2 of \cite{DKS} and Section 3 of \cite{Hui}, we have the following lemma.

\begin{lemma}\label{lem-est-w}
Let  $n\geq3$, $ 0< m <\frac{n-2}{n}$, $\beta> \beta_0(m)$ and $\gamma_2=\gamma_2(m,\beta)>\gamma_1=\gamma_1(m,\beta)>0$  be  the two positive roots  of \eqref{eq-char} given by \eqref{eq-roots-gamma-intro}. Let     $w=w_\lambda$   be the solution of \eqref{eq-2nd-asymp-w-lin} which is given by \eqref{eq-w-sol-lin-op} and let 
\begin{equation}\label{I1-I2-defn}
{\bf I}_i:= \int_{-\infty}^\infty  e^{ \gamma_i t}  \Phi(t)\,dt\qquad\forall i=1,2.
\end{equation}
Then there exists a constant $C>0$ such that
\begin{equation}\label{w-upper-bd}
|w(s)|\le C e^{-\gamma_1 s } \qquad\forall s\in\R.
\end{equation} 
Moreover the following holds.

\begin{enumerate}[(a)]

\item  If  $A_1=A_1(m,\beta)>0$, then 
\begin{equation}\label{w-negative}
w(s)<0\qquad\forall s\in\R. 
\end{equation}

\item If  $A_1=A_1(m,\beta)<0$, then there exists a constant $s_*>s_0$  such that  
\begin{equation}\label{w-positive-near-infty}
 w(s)>0\qquad\forall s\geq s_*. 
\end{equation} 
\item If $A_1(m,\beta)\ne 0$, then
\begin{equation}\label{eq-est-I1-bound}
0<{\bf I}_1<\infty.
\end{equation}

 \item     If  $A_1=A_1(m,\beta)=0$ and $A_2=A_2(m,\beta)<0$, then \eqref{w-negative} holds and
\begin{equation}\label{eq-est-I2-bound}
 0<  {\bf I}_2<\infty.
 \end{equation}
 \end{enumerate}
Here $s_0>0$ is the constant  appearing in  \eqref{w-small-at-infty}.
\end{lemma}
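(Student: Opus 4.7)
I would prove the five conclusions of the lemma in the order \eqref{w-upper-bd}, \eqref{eq-est-I1-bound}, (a), (b), (d). All arguments are driven by the variation-of-parameters formula \eqref{eq-w-sol-lin-op} together with the quadratic control $c_1 w^2 \le \Phi \le c_2 w^2$ from \eqref{eq-Phi-w^2} valid on $[s_0,\infty)$, and by the simple algebraic identity $A_1-A_2 = \beta(\gamma_2-\gamma_1)>0$.

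For the decay bound \eqref{w-upper-bd}, the first step is to observe that since $w$ satisfies \eqref{eq-2nd-asymp-w-lin} with linearization $L$ whose characteristic roots $-\gamma_1,-\gamma_2$ are strictly negative, the origin is an exponentially stable equilibrium of the associated first-order system; standard asymptotic ODE theory (Hartman's theorem or the stable manifold theorem) then provides some initial decay $|w(s)|\le Ce^{-\alpha s}$ for an $\alpha>0$. I would then bootstrap: bounding the kernel in \eqref{eq-w-sol-lin-op} by $(|A_1|+|A_2|)e^{-\gamma_1(s-t)}$ (using $s-t\ge0$ and $\gamma_2>\gamma_1>0$) and using $\Phi\le c_2 C^2 e^{-2\alpha t}$ on $[s_0,\infty)$, splitting the integral at $s_0$ yields $|w(s)|\le C'e^{-\min(2\alpha,\gamma_1)s}$ (up to a logarithmic factor in the resonant case $2\alpha=\gamma_1$); iterating $\alpha\mapsto 2\alpha$ reaches the target rate $\gamma_1$ in finitely many steps. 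Part (c) is then immediate: \eqref{w-upper-bd} combined with \eqref{eq-Phi-w^2} gives $e^{\gamma_1 t}\Phi(t)\le Ce^{-\gamma_1 t}$ on $[s_0,\infty)$, while $\Phi$ is bounded and $e^{\gamma_1 t}\to 0$ as $t\to-\infty$, so ${\bf I}_1<\infty$; strict positivity comes from $\phi(-1)=\tfrac{1}{m}-1>0$ combined with $w(-\infty)=-1$.

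For (a) I would factor the kernel as $e^{-\gamma_1(s-t)}\bigl[A_1-A_2\, e^{-(\gamma_2-\gamma_1)(s-t)}\bigr]$; since $A_1-A_2>0$, a short case analysis on the sign of $A_2$ (either $A_2\le 0$, in which case the bracket is $\ge A_1>0$ trivially, or $0<A_2<A_1$, in which case $A_2 e^{-(\gamma_2-\gamma_1)(s-t)}\le A_2<A_1$) shows the bracket is strictly positive for every $s-t\ge 0$ whenever $A_1>0$, so \eqref{eq-w-sol-lin-op} forces $w<0$ on $\R$. For (b), I would split
\[
w(s)=-C_0 A_1 e^{-\gamma_1 s}\!\int_{-\infty}^s\! e^{\gamma_1 t}\Phi(t)\,dt + C_0 A_2 e^{-\gamma_2 s}\!\int_{-\infty}^s\! e^{\gamma_2 t}\Phi(t)\,dt.
\]
By (c) the first summand equals $-C_0 A_1 {\bf I}_1 e^{-\gamma_1 s}(1+o(1))$, which is strictly positive for large $s$ when $A_1<0$; using \eqref{w-upper-bd} to bound $\Phi\le Ce^{-2\gamma_1 t}$ on $[s_0,\infty)$ and splitting into the cases $\gamma_2<2\gamma_1$, $\gamma_2=2\gamma_1$, $\gamma_2>2\gamma_1$ shows in each case that the second summand is $o(e^{-\gamma_1 s})$ as $s\to\infty$, so $w>0$ past some $s_*>s_0$.

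For (d), with $A_1=0$ the representation collapses to $w(s)=C_0 A_2 e^{-\gamma_2 s}\int_{-\infty}^s e^{\gamma_2 t}\Phi(t)\,dt$, and $w<0$ on $\R$ follows immediately from $A_2<0$ and positivity of the integral. To obtain \eqref{eq-est-I2-bound} I would run a second doubling bootstrap: plugging $|w|\le Ce^{-\delta s}$ into the collapsed formula yields, whenever $2\delta<\gamma_2$, the improved rate $|w|\le C'e^{-2\delta s}$; starting from $\delta_0=\gamma_1$ and iterating $\delta_k=2^k\gamma_1$, the threshold $2\delta_k\ge\gamma_2$ is crossed in finitely many steps, after which $e^{\gamma_2 t}\Phi(t)\le Ce^{(\gamma_2-2\delta_k)t}$ is integrable on $[s_0,\infty)$, giving ${\bf I}_2<\infty$. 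The main technical hurdle is precisely this bootstrap mechanism, which appears twice---to produce \eqref{w-upper-bd} and then to upgrade from $\gamma_1$- to $\gamma_2$-decay in (d); careful bookkeeping is required to ensure the implicit constants stay bounded along the iteration and that the resonant cases $2\alpha=\gamma_1$ and $2\delta_k=\gamma_2$ introduce only manageable polynomial corrections that are absorbed in the next bootstrap step.
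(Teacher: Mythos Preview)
Your approach is correct but follows a genuinely different logical route from the paper's.

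The paper works in the order (a), (b), (c), (d) and obtains the decay bound \eqref{w-upper-bd} only at the very end, as a \emph{consequence} of ${\bf I}_1<\infty$ (resp.\ ${\bf I}_2<\infty$) plugged back into the representation \eqref{eq-w-sol-lin-op}. Its proofs of ${\bf I}_1<\infty$ and ${\bf I}_2<\infty$ are by contradiction: assuming ${\bf I}_1=\infty$, the authors use the first-order identity $w'+\gamma_2 w=-C_*A_1 e^{-\gamma_1 s}\int_{-\infty}^s e^{\gamma_1 t}\Phi\,dt-C_*\beta\Phi$ (their \eqref{eq-w_s-w-2}) to force $|w(s)|\to\infty$, contradicting $w(s)\to 0$. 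Likewise, part (b) is obtained not via an asymptotic expansion but by a Gronwall-type argument applied directly to \eqref{eq-w_s-w-2}, which does not require knowing ${\bf I}_1<\infty$ in advance.

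You instead establish the decay \eqref{w-upper-bd} first, via exponential stability of the hyperbolic sink plus a doubling bootstrap, and then everything else falls out constructively: ${\bf I}_1<\infty$ is read off directly from $\Phi\le Ce^{-2\gamma_1 t}$, part (b) comes from a clean two-term asymptotic splitting, and ${\bf I}_2<\infty$ from a second bootstrap. The trade-off is that the paper's argument is entirely self-contained (only the integral formula and elementary inequalities), whereas yours imports an external ODE fact. One small caution: ``Hartman's theorem'' and ``the stable manifold theorem'' are not quite the right citations---Hartman--Grobman gives only a topological conjugacy, and since both eigenvalues are negative the stable manifold is the whole neighborhood. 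What you actually need is the linearization principle for exponential stability of a hyperbolic sink; to invoke it you should also note that $(w,w')\to(0,0)$, not just $w\to 0$, which follows for instance from \eqref{eq-w_s-w-2} together with the L'H\^opital-type observation that $e^{-\gamma_1 s}\int_{-\infty}^s e^{\gamma_1 t}\Phi(t)\,dt\to 0$ since $\Phi(s)\to 0$. With that detail supplied, your argument goes through.
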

\begin{proof} 
Since the proof of this lemma is  similar to the proof of Lemma 3.2 of \cite{DKS} and Section 3 of \cite{Hui}, we will only sketch the proof here. Multiplying \eqref{eq-w-linearized} by $e^{\gamma_1t}$ and $e^{\gamma_2t}$ respectively  and integrating over $(-\infty,s)$, 
\begin{equation}\label{eq-w_s-w-2}
w' +\gamma_2 w= - C_*A_1 e^{-\gamma_1s} \int_{-\infty}^s e^{\gamma_1t}\Phi(t)dt-C_*\beta\Phi(s)\quad\forall s\in\R 
\end{equation} 
and
\begin{equation}\label{eq-w_s-w-2a}
w' +\gamma_1 w= - C_*A_2 e^{-\gamma_2s} \int_{-\infty}^s e^{\gamma_2t}\Phi(t)dt-C_*\beta\Phi(s)\quad\forall s\in\R. 
\end{equation}
Note that since $\gamma_2>\gamma_1>0$, we have
 \begin{equation}\label{eq-est-gamma-st}
e^{-\gamma_1 (s-t)}> e^{-\gamma_2(s-t)} \quad\forall  s > t.
 \end{equation}
We now divide the proof into two cases.

\noindent{\bf Case 1}: $A_1\ne 0$.

\noindent{\bf Case 1a}: $A_1>0$. 

\noindent By \eqref{eq-w-sol-lin-op}, \eqref{eq-a1>a2}  and \eqref{eq-est-gamma-st},  we get \eqref{w-negative} and
\begin{equation}\label{eq-w-exp}
 |w(s)| = -w(s)\leq C_0\left(A_1+|A_2|\right)e^{-\gamma_1 s}\int_{-\infty}^s e^{\gamma_1 t}\,\Phi(t)dt\qquad\forall s\in\R. 
\end{equation}

\noindent{\bf Case 1b}: $A_1<0$. 

\noindent Let $C_3=C_*|A_1|\int_{-\infty}^{s_0} e^{\gamma_1 t}\Phi(t)\,dt$. Then $C_3>0$. 
By \eqref{delta2-defn}, \eqref{w-small-at-infty}, \eqref{eq-Phi-w^2} and \eqref{eq-w_s-w-2}, 
\begin{align}\label{w'-ineqn10}
&w' +\gamma_2 w\ge C_3e^{-\gamma_1s}-c_2C_*\beta w^2\qquad\forall s\ge s_0\notag\\
\Rightarrow\quad&\frac{d}{ds}\left(e^{\gamma_2s+c_2C_*\beta\int_{s_0}^sw(t)\,dt}w(s)\right)\ge C_3e^{(\gamma_2-\gamma_1)s+c_2C_*\beta\int_{s_0}^sw(t)\,dt}\ge C_3e^{\frac{(\gamma_2-\gamma_1)}{2}s}\qquad \forall s\ge s_0\notag\\ 
\Rightarrow\quad&e^{\gamma_2s+C_4\int_{s_0}^sw(t)\,dt}w(s)\ge \frac{2C_3}{\gamma_2-\gamma_1}\left(e^{\frac{(\gamma_2-\gamma_1)}{2}s}-e^{\frac{(\gamma_2-\gamma_1)}{2}s_0}\right)+e^{\gamma_2s_0}w(s_0)\qquad \forall s\ge s_0 
\end{align} 
where $C_4=c_2C_*\beta$.
Since $\gamma_2>\gamma_1>0$, by \eqref{w'-ineqn10} there exists $s_*>s_0$ such that 
\eqref{w-positive-near-infty} holds. Since $A_2<A_1<0$, by \eqref{eq-w-sol-lin-op}, it follows that 
\begin{equation}\label{eq-w-exp-1}
0<w(s)\leq C_0|A_1|e^{-\gamma_1 s}\int_{-\infty}^s e^{\gamma_1 t}\Phi(t)dt\qquad\forall s\ge s_*.
\end{equation}

\noindent Hence when $A_1\ne 0$, by \eqref{eq-w-exp} and \eqref{eq-w-exp-1} there exist constants $s_*>s_0$ and $C>0$ such that 
\begin{equation}\label{eq-w-exp-decay}
|w(s)|\le Ce^{-\gamma_1 s}\int_{-\infty}^s e^{\gamma_1 t}\,\Phi(t)dt\qquad\forall s\ge s_*,
\end{equation} 
 and $w(s)>0$ for any $s\ge s_*$ if $A_1<0$.

Now we claim that \eqref{eq-est-I1-bound} holds. 
 Suppose to the contrary that 
\begin{equation}\label{I1=infty}
{\bf I}_1=\infty.
\end{equation}
Let
\begin{equation*}
v(s)=\left\{\begin{aligned}
&-w(s)\quad\forall s\in\R\quad\mbox{ if }A_1>0,\\
&w(s)\qquad\,\,\forall s\in\R\quad\mbox{ if }A_1<0,
\end{aligned}\right.
\end{equation*}
and
$$
J_1(s)= \int_{s_0}^s e^{\gamma_1 t}\,\Phi(t)dt,
$$ 
where $s_0$ is as given by \eqref{eq-Phi-w^2}.
Then by  \eqref{eq-Phi-w^2}, \eqref{eq-w-exp-decay}, \eqref{I1=infty} and an argument similar to the proof of Lemma 3.2 of \cite{DKS} there exists constants $s_1>s_*$, and  $c_3>0$ such that
\begin{equation}\label{eq-J1-exp}
J_1(s)\geq c_3 e^{\gamma_1 s}\qquad \forall s\ge s_1.
\end{equation}
By \eqref{eq-Phi-w^2}, \eqref{eq-w_s-w-2} and    \eqref{eq-J1-exp},  we deduce that
\begin{align}\label{w-ineqn5}
&v'+\gamma_2v\ge C_*|A_1|e^{-\gamma_1s} J_1(s)-c_2C_*\beta w^2\qquad \forall s\ge s_1\notag\\
\Rightarrow \q&v'+\gamma_2v\ge c_3C_*|A_1|-c_2C_*\beta w^2\qquad \forall s\ge s_1.
\end{align} 
Since $\displaystyle\lim_{s\to\infty}w(s)=0$, there exists $s_2>s_1$ such that 
\begin{equation}\label{w-upper-bd5}
|w(s)|\le\min\left(\frac{c_3C_*|A_1|}{4\gamma_2},\sqrt{\frac{c_3|A_1|}{4c_2\beta}}\right)\qquad\forall s\ge s_2.
\end{equation}
By using \eqref{w-ineqn5} and \eqref{w-upper-bd5}, it follows that 
\begin{align*}
&v'(s)\ge\frac{c_3C_*|A_1|}{2}\qquad\forall s\ge s_2\\
\Rightarrow\quad&v(s)=|w(s)|\to\infty\qquad\mbox{ as }\, s\to\infty,
\end{align*}
which contradicts the fact that $\displaystyle\lim_{s\to\infty}w(s)=0$. Hence \eqref{I1=infty} does not hold and \eqref{eq-est-I1-bound} follows. 
 
\noindent{\bf Case 2}: $A_1 =0$ and $A_2<0$. 

\noindent By \eqref{eq-w-sol-lin-op}, we have 
 \begin{equation}\label{eq-w-sol-lin-op-exp-2}
w(s)=-C_0|A_2 |e^{-\gamma_2s}\int_{-\infty}^se^{ \gamma_2 t} \Phi(t)\,dt<0\qquad\forall s\in\R
\end{equation}  
and \eqref{w-negative} follows.
Utilizing \eqref{eq-w_s-w-2a}, \eqref{eq-w-sol-lin-op-exp-2} and an argument  similar to the proof  of \eqref{eq-est-I1-bound}  in  Case 1, we get  \eqref{eq-est-I2-bound}. 

Finally by  \eqref{eq-est-I1-bound}, \eqref{eq-est-I2-bound}, \eqref{eq-w-exp-decay} and \eqref{eq-w-sol-lin-op-exp-2}, we get \eqref{w-upper-bd} and the lemma follows.
\end{proof}
 
\begin{lemma}\label{w-limit-behavior-lem}
Let  $n\geq3$, $ 0< m <\frac{n-2}{n}$, $\beta> \beta_0(m)$ and $\gamma_2=\gamma_2(m,\beta)>\gamma_1=\gamma_1(m,\beta)>0$  be  the two positive roots  of \eqref{eq-char} given by \eqref{eq-roots-gamma-intro}. Let     $w=w_\lambda$   be the solution of \eqref{eq-2nd-asymp-w-lin} which is given by \eqref{eq-w-sol-lin-op} and ${\bf I}_1$, ${\bf I}_2$ be given by \eqref{I1-I2-defn}. Then the following  holds.
\begin{enumerate}[(a)]
\item  If $A_1=A_1(m,\beta)\not=0$, then 
$$
\lim_{s\to\infty} e^{ \gamma_1s} w(s)  = -C_0 A_1\,{\bf I}_1. 
$$ 

\item  If  $A_1=A_1(m,\beta)=0$ and $A_2=A_2(m,\beta)<0$, then 
$$
\lim_{s\to\infty} e^{ \gamma_2s} w(s)  = C_0 A_2\,{\bf I}_2 <0.
$$

\end{enumerate}
\end{lemma}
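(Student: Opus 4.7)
\medskip
\noindent\textbf{Proof proposal.} The plan is to start from the explicit representation
\begin{equation*}
w(s)=-C_0 A_1\, e^{-\gamma_1 s}\int_{-\infty}^{s} e^{\gamma_1 t}\Phi(t)\,dt + C_0 A_2\, e^{-\gamma_2 s}\int_{-\infty}^{s} e^{\gamma_2 t}\Phi(t)\,dt,
\end{equation*}
derived from \eqref{eq-w-sol-lin-op}, and to take limits term by term after multiplying by the appropriate exponential weight.

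For part (a), I would multiply by $e^{\gamma_1 s}$ to obtain
\begin{equation*}
e^{\gamma_1 s} w(s) = -C_0 A_1 \int_{-\infty}^{s} e^{\gamma_1 t}\Phi(t)\,dt + C_0 A_2\, e^{(\gamma_1-\gamma_2)s}\int_{-\infty}^{s} e^{\gamma_2 t}\Phi(t)\,dt.
\end{equation*}
By part (c) of Lemma \ref{lem-est-w}, we have $0<\mathbf{I}_1<\infty$, so the first term converges to $-C_0 A_1 \mathbf{I}_1$ as $s\to\infty$. The work is to show the second term vanishes. Here I would use the exponential decay \eqref{w-upper-bd}, namely $|w(s)|\leq C e^{-\gamma_1 s}$, combined with \eqref{eq-Phi-w^2} to conclude $\Phi(t)\leq c_2 C^2 e^{-2\gamma_1 t}$ for all $t\ge s_0$. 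Since $\Phi$ is bounded on $\R$ (because $w>-1$ and $\phi$ is continuous up to $-1$) and $e^{\gamma_2 t}$ is integrable on $(-\infty,s_0]$, the contribution from $(-\infty,s_0]$ gives a constant multiplied by $e^{(\gamma_1-\gamma_2)s}\to 0$. For the tail, I would split into three cases: if $\gamma_2>2\gamma_1$ then $\int_{s_0}^{s} e^{(\gamma_2-2\gamma_1)t}\,dt\lesssim e^{(\gamma_2-2\gamma_1)s}$ and the product decays like $e^{-\gamma_1 s}$; if $\gamma_2=2\gamma_1$ the integral grows linearly and $s\, e^{-\gamma_1 s}\to 0$; if $\gamma_2<2\gamma_1$ the integral is uniformly bounded and $e^{(\gamma_1-\gamma_2)s}\to 0$.

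For part (b), with $A_1=0$ the formula collapses to
\begin{equation*}
e^{\gamma_2 s} w(s) = C_0 A_2 \int_{-\infty}^{s} e^{\gamma_2 t}\Phi(t)\,dt,
\end{equation*}
and part (d) of Lemma \ref{lem-est-w} gives $0<\mathbf{I}_2<\infty$, so passing $s\to\infty$ yields the limit $C_0 A_2\,\mathbf{I}_2$, which is negative since $A_2<0$ and $C_0,\mathbf{I}_2>0$.

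The main obstacle I anticipate is the second term in part (a): one has to control $e^{(\gamma_1-\gamma_2)s}\int_{-\infty}^{s} e^{\gamma_2 t}\Phi(t)\,dt$ without assuming ${\bf I}_2$ is finite (indeed, in the regime $A_1\neq 0$, Lemma \ref{lem-est-w} only provides ${\bf I}_1<\infty$). The resolution is precisely to bootstrap on the pointwise exponential bound $|w|\lesssim e^{-\gamma_1 s}$, which feeds quadratically into $\Phi$ through \eqref{eq-Phi-w^2} and creates enough decay to beat $e^{\gamma_2 t}$, regardless of how $\gamma_2$ compares with $2\gamma_1$. Once this case analysis is settled, the rest reduces to dominated convergence applied to a finite-measure integrand.
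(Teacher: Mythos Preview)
Your proposal is correct and follows essentially the same route as the paper's proof. For part (a) you reduce to showing $e^{(\gamma_1-\gamma_2)s}\int_{-\infty}^{s}e^{\gamma_2 t}\Phi(t)\,dt\to 0$, then feed \eqref{w-upper-bd} into \eqref{eq-Phi-w^2} to bound $\Phi(t)\lesssim e^{-2\gamma_1 t}$ on $[s_0,\infty)$ and handle the tail; the paper does exactly this, only packaging your three-case split into the single crude bound $\int_{s_0}^{s}e^{(\gamma_2-2\gamma_1)t}\,dt\le C'(s+e^{(\gamma_2-2\gamma_1)s})$, which covers all cases at once. Your explicit treatment of the $(-\infty,s_0]$ contribution and of part (b) match the paper as well.
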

\begin{proof}
We first observe that  (b)  follows immediately from  \eqref{eq-w-sol-lin-op-exp-2} and (d) of Lemma \ref{lem-est-w}. Hence it remains to prove (a). Suppose now that $A_1(m,\beta)\not=0$.  By \eqref{eq-w-sol-lin-op}  and (c) of Lemma \ref{lem-est-w}, it suffices to prove that 
\begin{equation}\label{eq-phi-2-limit}
\lim_{s\to\infty} e^{-(\gamma_2-\gamma_1)s}\int_{-\infty}^s  e^{\gamma_2t }\Phi(t)dt \,=0.
\end{equation}
By \eqref{eq-Phi-w^2} and \eqref{w-upper-bd},  we deduce that 
\begin{align*}
e^{-(\gamma_2-\gamma_1)s}\int_{s_0}^s  e^{\gamma_2t }\Phi(t) dt&\le C_1\,e^{-(\gamma_2-\gamma_1)s}\int_{s_0}^s e^{(\gamma_2-2\gamma_1)t}\,dt\qquad \forall s\ge s_0\\
&\le C_1' e^{-(\gamma_2-\gamma_1)s} \left(s+ {e^{ (\gamma_2-2\gamma_1)s}} \right)\qquad\forall s\ge s_0\\ 
&=C_1' \left(se^{-(\gamma_2-\gamma_1)s}+ {e^{-\gamma_1s}} \right)\qquad\forall s\ge s_0\\ 
&\to 0\qquad\mbox{ as }\,\,s\to\infty
\end{align*} for some constants $C_1>0$, $C_1'>0$,
and  \eqref{eq-phi-2-limit} follows. 
\end{proof}

\begin{proof}[\textbf{Proof of Theorem \ref{thm-self-sol-2nd-asymp}}]
By  Corollary \ref{cor-sign-A12}, Lemma \ref{w-limit-behavior-lem} and the relation between $w=w_{\lambda}$ and $f_{\lambda}$,   for any given $(m,\beta)$ there exists a constant $B_\lambda>0$ such that 
\begin{equation}\label{eq-f-conv-est-B}
\left\{\begin{aligned}
&\lim_{r\to\infty} r^{\gamma} \left\{\left[\left({ {r^2}/{C_*}}\right)^{\frac{1}{1-m}} f_\lambda(r)\right]^m-1\right\}=- mB_\lambda\quad \hbox{ if \eqref{eq-cond-mono} holds;}\\
&\lim_{r\to\infty} r^{\gamma} \left\{\left[\left({ {r^2}/{C_*}}\right)^{\frac{1}{1-m}} f_\lambda(r)\right]^m-1\right\}=  mB_\lambda\quad\,\,\,\,\hbox{ if \eqref{eq-cond-non-mono} holds;}\\
\end{aligned} \right.
\end{equation} 
with $\gamma= \gamma_1(m,\beta)$, and 
\begin{equation}\label{eq-f-conv-est-B-2}
\lim_{r\to\infty} r^{\gamma} \left\{\left[\left({ {r^2}/{C_*}}\right)^{\frac{1}{1-m}} f_\lambda(r)\right]^m-1\right\}=- mB_\lambda \quad\hbox{ if \eqref{eq-cond-mono-beta1} holds} 
\end{equation} 
with $\gamma= \gamma_2(m,\beta)$. 
Moreover by Lemma \ref{lem-scaling} for any $\lambda>0$,
\begin{align}\label{scaling-eqn}
&\lim_{r\to\infty}r^{\gamma} \left\{\left[\left({ {r^2}/{C_*}}\right)^{\frac{1}{1-m}} f_\lambda(r)\right]^m-1\right\}
=\lambda^{-\gamma}\lim_{\rho\to\infty}\rho^{\gamma} \left\{\left[\left({{\rho^2}/{C_*}}\right)^{\frac{1}{1-m}} f_1(\rho)\right]^m-1\right\}\notag\\
\Rightarrow\quad&\, B_{\lambda}= B_1\lambda^{-\gamma }.
\end{align}
Letting $B=B_1$,  by \eqref{eq-f-conv-est-B} and \eqref{scaling-eqn}, (a) and (b) follow. In light of  \eqref{eq-f-conv-est-B-2} and \eqref{scaling-eqn}, we will prove that $\gamma= \gamma_2(m,\beta)=2$ in case (c)  to complete the proof. 
 
Suppose now \eqref{eq-cond-mono-beta1} holds. Then $A_0(m,\beta_1(m))=n-2m-nm$, and  \eqref{eq-char} is equal to
\begin{align*}
&\gamma^2-\frac{n-2m-nm}{1-m}\gamma+\frac{2(n-2-nm)}{1-m}=0\\
\Rightarrow\quad&\gamma_2=2\,>\,\gamma_1=\frac{n-2-nm}{1-m}.
\end{align*}
Therefore (c) follows.
\end{proof}

\begin{cor}
Let  $n\geq3$, $ 0< m <\frac{n-2}{n}$ and $\beta> \beta_0(m)$. Suppose  either \eqref{eq-cond-mono} or \eqref{eq-cond-mono-beta1} holds. Then  for any $\lambda>0$,
\begin{equation}\label{eq-est-f-ld-tildeC}
f_{\lambda}(r) <\cC(r)=  \left(\frac{C_*}{ r^2}\right)^{ \frac{1}{1-m}} \qquad \forall r>0.
\end{equation}
\end{cor}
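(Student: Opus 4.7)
The plan is to translate the inequality $f_\lambda(r)<\cC(r)$ into a statement about the sign of the normalized quantity $w_\lambda$ already analyzed in Lemmas \ref{lem-est-w} and the sign classification in Corollary \ref{cor-sign-A12}. Recall that under the substitution $s=\log r$ the function
$$
g_\lambda(s)=\left[C_*^{-\frac{1}{1-m}}r^{\frac{2}{1-m}}f_\lambda(r)\right]^{m},\qquad w_\lambda(s)=g_\lambda(s)-1,
$$
has been introduced in Subsection \ref{subsec-2nd-asymp}. A direct computation shows that
$$
f_\lambda(r)<\cC(r)=\left(\frac{C_*}{r^2}\right)^{\frac{1}{1-m}}\quad\Longleftrightarrow\quad g_\lambda(s)<1\quad\Longleftrightarrow\quad w_\lambda(s)<0.
$$
Hence the corollary reduces to verifying that $w_\lambda(s)<0$ for every $s\in\R$ under hypothesis \eqref{eq-cond-mono} or \eqref{eq-cond-mono-beta1}.

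Next I would simply invoke the sign analysis already established. If \eqref{eq-cond-mono} holds, then Corollary \ref{cor-sign-A12}(a) gives $A_1(m,\beta)>0$, and part (a) of Lemma \ref{lem-est-w} yields $w_\lambda(s)<0$ for all $s\in\R$. If instead \eqref{eq-cond-mono-beta1} holds, then by Corollary \ref{cor-sign-A12}(c) we have $A_1(m,\beta)=0$ and $A_2(m,\beta)<0$, and part (d) of Lemma \ref{lem-est-w} again yields $w_\lambda(s)<0$ for all $s\in\R$. In either case the strict inequality $f_\lambda(r)<\cC(r)$ follows for all $r>0$, and the constant $\lambda>0$ plays no role since the scaling identity \eqref{f-lambda-f1-eqn} reduces every case to $\lambda=1$ (alternatively, the sign of $w_\lambda$ does not depend on $\lambda$).

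There is essentially no obstacle here: the genuine work has been carried out in Lemma \ref{lem-est-w}, where the positivity-of-${\bf I}_1$ argument and the ODE comparison in cases $A_1>0$ and $A_1=0$, $A_2<0$ were used to force $w_\lambda$ to remain strictly negative on all of $\R$. The only point requiring care is to check the equivalence $f_\lambda<\cC\Leftrightarrow w_\lambda<0$, which is immediate from the definitions of $g_\lambda$ and $w_\lambda$ and the positivity of $f_\lambda$.
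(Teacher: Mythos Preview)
Your proposal is correct and follows essentially the same approach as the paper: invoke Corollary \ref{cor-sign-A12} to determine the sign of $A_1(m,\beta)$ (and $A_2(m,\beta)$ in case \eqref{eq-cond-mono-beta1}), apply Lemma \ref{lem-est-w} to conclude $w_\lambda(s)<0$ for all $s\in\R$, and translate back via $g_\lambda=1+w_\lambda<1$ to obtain $f_\lambda(r)<\cC(r)$.
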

\begin{proof}
By Corollary \ref{cor-sign-A12}  and Lemma \ref{lem-est-w}, it holds that $w_\lambda(s)<0$ for all $s\in \R$. Hence
$$
\left({{r^2}/{C_*}}\right)^{\frac{1}{1-m}} f_\lambda(r)  =  g_\lambda^{1/m}(s)=\big\{1+w_\lambda(s)\big\}^{1/m}  <1\qquad  \forall r=e^{s}>0
$$
and \eqref{eq-est-f-ld-tildeC} follows.
\end{proof} 
 
\subsection{Properties of the self-similar profile $f_{\lambda}$}
Next we study properties of self-similar profiles $f_{\lambda}$ in  the cases of \eqref{eq-cond-mono}, \eqref{eq-cond-non-mono} and \eqref{eq-cond-mono-beta1}. Here and below $f_{\lambda}$ is the unique radially symmetric  smooth solution to  \eqref{eq-ellip}  given  by Theorem \ref{thm-cigar-soliton-existence}.

\begin{lemma}[{Monotonicity}]\label{lem-monotonicity}
Suppose either  \eqref{eq-cond-mono} or \eqref{eq-cond-mono-beta1} holds. Then 
\begin{equation}\label{f-lambda-derivative-positive16}
\frac{d f_{\lambda} }{d\lambda }(r)>0\qquad \forall r\geq 0,\,\, \lambda>0,
\end{equation}
and \eqref{eq-(A)-mono} holds.
\end{lemma}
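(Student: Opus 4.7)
The plan is to use the scaling relation of Lemma~\ref{lem-scaling} to reduce \eqref{f-lambda-derivative-positive16} to a monotonicity statement for the normalized profile $w_{1}$ studied in Subsection~\ref{subsec-2nd-asymp}, and then to exploit an integrating-factor identity for the ODE it satisfies.

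First, differentiating $f_{\lambda}(r)=\lambda^{2/(1-m)}f_{1}(\lambda r)$ in $\lambda$ gives
\[
\lambda\,\frac{d f_{\lambda}}{d\lambda}(r)=\frac{2}{1-m}f_{\lambda}(r)+rf_{\lambda}'(r)=\lambda^{2/(1-m)}\Big[\frac{2}{1-m}f_{1}(u)+u f_{1}'(u)\Big]\bigg|_{u=\lambda r},
\]
so \eqref{f-lambda-derivative-positive16} is equivalent to the function $F(u):=u^{2/(1-m)}f_{1}(u)$ being strictly increasing on $[0,\infty)$. Since $F(u)=C_{*}^{1/(1-m)}g_{1}(\log u)^{1/m}$ by the definition of $g_{\lambda}$ in Subsection~\ref{subsec-2nd-asymp}, this is in turn equivalent to $w_{1}:=g_{1}-1$ being strictly increasing on $\R$. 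Once this is established, \eqref{eq-(A)-mono} follows at once from $f_{\lambda_{2}}(r)-f_{\lambda_{1}}(r)=\int_{\lambda_{1}}^{\lambda_{2}}(d f_{\lambda}/d\lambda)(r)\,d\lambda$.

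The key observation is that in both hypotheses of the lemma full sign information on $w_{1}$ is already available. By Corollary~\ref{cor-sign-A12}, both \eqref{eq-cond-mono} and \eqref{eq-cond-mono-beta1} place us in case (a) or (d) of Lemma~\ref{lem-est-w}, so $w_{1}(s)<0$ for every $s\in\R$; and $w_{1}(s)>-1$ automatically from $g_{1}>0$. Hence $w_{1}(s)\in(-1,0)$ on all of $\R$, and on this interval the function $\eta(w):=(1+w)^{1/m}-1-w$ appearing in \eqref{eq-2nd-asymp-w-lin} is strictly negative (by strict convexity of $\eta$ on $(-1,\infty)$ together with $\eta(-1)=\eta(0)=0$). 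Writing \eqref{eq-2nd-asymp-w-lin} as $w_{1}''+\mathcal{A}(s)w_{1}'=-B\eta(w_{1})$ with $B:=\frac{2m(n-2-nm)}{(1-m)^{2}}>0$ and $\mathcal{A}(s):=\frac{n-2-(n+2)m}{1-m}+\frac{\beta C_{*}}{m}(1+w_{1}(s))^{1/m-1}$, and introducing the positive integrating factor $\mu(s):=\exp(\int_{0}^{s}\mathcal{A}(t)\,dt)$, I would obtain
\[
(\mu w_{1}')'(s)=-B\,\mu(s)\,\eta(w_{1}(s))>0\qquad\text{for every }s\in\R,
\]
so $\mu w_{1}'$ is strictly increasing on $\R$.

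The lemma then follows by a short contradiction. If $w_{1}'(s_{0})=0$ at some $s_{0}\in\R$, strict monotonicity of $\mu w_{1}'$ forces $\mu w_{1}'<0$ on $(-\infty,s_{0})$, hence $w_{1}'<0$ there, so $w_{1}$ is strictly decreasing on $(-\infty,s_{0})$ and
\[
-1=\lim_{s\to-\infty}w_{1}(s)\geq w_{1}(s_{0})>-1,
\]
a contradiction. Hence $w_{1}'$ is nowhere zero on $\R$; being continuous, it has constant sign, and since $w_{1}(-\infty)=-1<0=w_{1}(+\infty)$ it must be strictly positive throughout, proving \eqref{f-lambda-derivative-positive16} and then \eqref{eq-(A)-mono}. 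The step I expect to require the most care is verifying that $w_{1}(s)\in(-1,0)$ \emph{strictly} at every finite $s$ (so that the inequality $(\mu w_{1}')'>0$ is strict rather than merely nonnegative); this, however, is already guaranteed by combining the sign assertions of Lemma~\ref{lem-est-w} with positivity of $g_{1}$.
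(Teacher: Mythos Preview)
Your proof is correct and follows essentially the same strategy as the paper: both reduce \eqref{f-lambda-derivative-positive16} via the scaling relation to the statement that $g_{1}'>0$ (equivalently $w_{1}'>0$) on $\R$, and both derive this from the key sign information $w_{1}(s)\in(-1,0)$ supplied by Corollary~\ref{cor-sign-A12} and Lemma~\ref{lem-est-w}. The only difference is in the packaging of the contradiction: the paper first checks $g_{1}'>0$ near $s=-\infty$ from the initial data $f_{1}(0)>0$, $f_{1}'(0)=0$, then evaluates the second-order ODE at a putative first critical point $s_{2}$ (where $g_{1}'(s_{2})=0$, $g_{1}''(s_{2})\le 0$) to reach a contradiction with the positivity of $g_{1}-g_{1}^{1/m}$; you instead introduce the integrating factor $\mu$ to obtain strict monotonicity of $\mu w_{1}'$ and argue via the boundary value $w_{1}(-\infty)=-1$. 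These are interchangeable variants of the same idea.
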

\begin{proof}
By  Corollary \ref{cor-sign-A12} and  Lemma \ref{lem-est-w},  we have 
\begin{align} 
&g_\lambda(s)=\left({  e^{2s}/{C_*}}\right)^{\frac{m}{1-m}}  f_\lambda^m(e^s)  = 1+w_\lambda(s)<1\qquad\qquad\qquad\quad \forall s\in\R\label{eq-q<1}\\    
\Rightarrow\quad &g_\lambda'(s)=mC_*^{-\frac{m}{1-m}} e^{\frac{2ms}{1-m}} f_{\lambda}^{m-1}(e^s) \left(\,\frac{2}{1-m} f_\lambda(e^s) +e^sf'_{\lambda} (e^s)\,\right)\quad\,\, \forall s\in\R. \label{eq-est-q_s}
\end{align}
Hence  $g_\lambda'(s)>0$ for   sufficiently small $s\ll-1$ since $f_{\lambda}(0)=\lambda^{\frac{2}{1-m}}$ and $f_{\lambda}'(0)=0$.   
Now we claim that for $g=g_\lambda$,
\begin{equation}\label{q-s-derivative>0}
g'(s)>0\qquad\forall s\in\R.
\end{equation} 
Suppose  the claim \eqref{q-s-derivative>0} does not hold. Then  there exists  a constant $s_0'\in\R$ such that $g'(s_0')\le 0$. Hence by the intermediate value theorem there exists a constant $s_1\le s_0'$ such that $g'(s_1)=0$. Let 
$$
s_2:=\sup\left\{s'\in\R:\, g'(s)>0\quad\forall s<s' \right\}.
$$ 
Then $-\infty<s_2\le s_1$,  $g'(s_2)=0$ and $g''(s_2)\leq0$. Hence the equation \eqref{eq-2nd-asymp-q} for $g=g_\lambda$ evaluated at $s_2$  yields that 
\begin{equation*} 
0\geq g''+\left\{\frac{n-2-(n+2)m}{1-m}+\frac{\beta C_*}{m} g^{\frac{1}{m}-1}\right\}g'=\frac{2m(n-2-nm)}{(1-m)^2}\left(g -g^{\frac{1}{m}}\right) .
\end{equation*}
However by \eqref{eq-q<1} the right hand side of the above equation is positive and a contradiction arises. Thus the claim  \eqref{q-s-derivative>0} holds.  
 
By  \eqref{eq-est-q_s} and \eqref{q-s-derivative>0}, it holds that 
$$
\frac{2}{1-m} f_\lambda(r) +    r f'_{\lambda} (r) >0\qquad \forall r\ge 0
$$ 
since $f_{\lambda}(0)=\lambda^{\frac{2}{1-m}}$ and $f_{\lambda}'(0)=0$. 
Therefore, it follows from Lemma \ref{lem-scaling} that for any $\lambda>0$,
\begin{equation*} 
\frac{d f_{\lambda}}{d\lambda }(r) =\frac{d}{d\lambda }\left\{ \lambda^{\frac{2}{1-m}} f_1(\lambda r)\right\}=\lambda^{\frac{2}{1-m}-1}   \left\{\frac{2}{1-m} f_1(\lambda r) + \lambda   r \cdot f'_{1} ( \lambda r)  \right\}>0\qquad\forall r\ge 0,
\end{equation*}
and \eqref{f-lambda-derivative-positive16} follows. Integrating \eqref{f-lambda-derivative-positive16} over $\lambda\in(\lambda_1,\lambda_2)$,  we deduce \eqref{eq-(A)-mono} finishing the proof.
\end{proof}

With the  non-monotone   condition   \eqref{eq-cond-non-mono}, we have a reverse monotonicity of $f_{\lambda}(r)$ with respect to $\lambda>0$  for sufficiently large $r\gg1.$

\begin{lemma}[Reverse monotonicity near infinity]\label{lem-rev-monotonicity}
Suppose  \eqref{eq-cond-non-mono} holds. Then there exists a constant $R_0>0$  such that   
\begin{equation}\label{f-lambda-derivative<0} 
\frac{d f_{\lambda}}{d\lambda }(r)<0\qquad \forall r\ge R_0/\lambda,\,\,\lambda>0,
\end{equation} 
and \eqref{eq-(B)-non-mono} holds.
\end{lemma}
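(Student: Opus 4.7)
The plan is to reduce the lemma to a single sign condition on the derivative $w_1'$ at infinity via the scaling identity in Lemma~\ref{lem-scaling}, and then to extract that sign from the integral representation supplied by Lemma~\ref{lem-est-w}. By Lemma~\ref{lem-scaling},
$$\frac{df_\lambda}{d\lambda}(r)=\lambda^{\frac{2}{1-m}-1}\Bigl[\tfrac{2}{1-m}\,f_1(\lambda r)+\lambda r\,f_1'(\lambda r)\Bigr],$$
so, setting $\rho=\lambda r$ and $\sigma=\log\rho$, the computation \eqref{eq-est-q_s} applied with $\lambda=1$ gives
$$\tfrac{2}{1-m}\,f_1(\rho)+\rho\,f_1'(\rho)=\frac{f_1(\rho)}{m\,g_1(\sigma)}\,w_1'(\sigma).$$
Thus if one can exhibit $s_{**}\in\R$ with $w_1'(\sigma)<0$ for every $\sigma\geq s_{**}$, then $\frac{df_\lambda}{d\lambda}(r)<0$ whenever $\lambda r\geq R_0:=e^{s_{**}}$, proving \eqref{f-lambda-derivative<0}.

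To produce $s_{**}$, differentiate the integral representation \eqref{eq-w-sol-lin-op}; the boundary terms cancel, giving
$$w_1'(s)=\frac{1}{\gamma_2-\gamma_1}\bigl[-\gamma_1\,G_1(s)+\gamma_2\,G_2(s)\bigr],\qquad G_i(s):=e^{-\gamma_i s}\!\int_{-\infty}^s\!e^{\gamma_i t}h(t)\,dt.$$
Integrating the $\Phi'$-part of $h=-C_*(\beta\Phi'+\tfrac{1}{1-m}\Phi)$ by parts yields
$$G_1(s)=-C_*\beta\,\Phi(s)-C_*A_1\,e^{-\gamma_1 s}\!\int_{-\infty}^s\!e^{\gamma_1 t}\Phi(t)\,dt.$$
Under \eqref{eq-cond-non-mono}, Corollary~\ref{cor-sign-A12}(b) gives $A_1<0$ and Lemma~\ref{lem-est-w}(c) gives $\mathbf{I}_1\in(0,\infty)$; combined with the quadratic bound $\Phi=O(w_1^2)=O(e^{-2\gamma_1 s})$ from \eqref{eq-Phi-w^2} and \eqref{w-upper-bd}, this yields $e^{\gamma_1 s}G_1(s)\to -C_*A_1\mathbf{I}_1>0$. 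The same quadratic bound shows $e^{\gamma_1 s}G_2(s)\to 0$, regardless of whether $\gamma_2<2\gamma_1$, $=2\gamma_1$, or $>2\gamma_1$ (each case reduces to the decay of $e^{(\gamma_1-\gamma_2)s}\!\int_{-\infty}^s e^{\gamma_2 t}\Phi(t)\,dt$, which is controlled by $\Phi=O(e^{-2\gamma_1 t})$). Therefore
$$\lim_{s\to\infty}e^{\gamma_1 s}\,w_1'(s)=\frac{\gamma_1\,C_*\,A_1\,\mathbf{I}_1}{\gamma_2-\gamma_1}\,<\,0,$$
which furnishes the required $s_{**}$.

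For \eqref{eq-(B)-non-mono}, fix $\lambda_2>\lambda_1>0$ and $r\geq R_0/\lambda_1$. For every $\mu\in[\lambda_1,\lambda_2]$ one has $r\geq R_0/\lambda_1\geq R_0/\mu$, so $\frac{df_\mu}{d\mu}(r)<0$ by the first part of the lemma. Integrating in $\mu$ from $\lambda_1$ to $\lambda_2$ gives $f_{\lambda_2}(r)<f_{\lambda_1}(r)$, which is \eqref{eq-(B)-non-mono}.

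The main obstacle is making the leading-order asymptotic of $w_1'(s)$ rigorous: naive termwise differentiation of the expansion $w_1(s)\sim -C_0A_1\mathbf{I}_1\,e^{-\gamma_1 s}$ of Lemma~\ref{w-limit-behavior-lem} is not justified, and the combination of the integration-by-parts step with the quadratic smallness of $\Phi$ (itself dependent on the a priori decay estimate $|w_1|\leq Ce^{-\gamma_1 s}$) is what carries the argument through.
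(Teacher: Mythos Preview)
Your proof is correct but proceeds along a different line than the paper's. The paper argues at the level of $g_\lambda=1+w_\lambda$: under \eqref{eq-cond-non-mono} one has $g_\lambda>1$ near infinity (from $w_\lambda>0$ in Lemma~\ref{lem-est-w}(b)), so since $g_\lambda\to 1$ there is a point $s_1$ with $g_\lambda'(s_1)<0$; then a maximum-principle-style argument on the ODE \eqref{eq-2nd-asymp-q} (using that $g-g^{1/m}<0$ when $g>1$) rules out any later zero of $g_\lambda'$, forcing $g_\lambda'<0$ on $[s_1,\infty)$. Your route instead differentiates the integral representation \eqref{eq-w-sol-lin-op} and identifies the sharp limit $e^{\gamma_1 s}w_1'(s)\to \gamma_1 C_* A_1\mathbf{I}_1/(\gamma_2-\gamma_1)<0$. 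This is more quantitative---you get the exact decay rate of $w_1'$, not just its sign---but it costs you the finiteness of $\mathbf{I}_1$ from Lemma~\ref{lem-est-w}(c) and the limit computation \eqref{eq-phi-2-limit}, whereas the paper's ODE argument only needs the cruder input $w_\lambda>0$ near infinity. Both approaches then conclude via the same scaling computation through \eqref{eq-est-q_s} and Lemma~\ref{lem-scaling}.
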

\begin{proof} 
  Let $\lambda>0$. We first observe that by Corollary \ref{cor-sign-A12} and  Lemma \ref{lem-est-w} there exists a constant $s_*>s_0$ such that
\begin{equation}\label{eq-q>1}
g_\lambda(s)=1+w_\lambda(s)>1\qquad\forall s>s_*.
\end{equation}
Since $\displaystyle \lim_{s\to\infty}g_\lambda(s)=1$ by  \eqref{eq-2nd-asymp-q},  there exists a constant  $s_1=s_1(\lambda)>s_*$ such that  
 $g'(s_1)<0$.  We claim that  $g=g_\lambda$ satisfies
\begin{equation}\label{q-s-derivative<0}
g'(s)<0\qquad\forall s\ge s_1.
\end{equation} 
Suppose to the contrary that the claim does not hold. Then there exists $s_2>s_1$ such that $g'(s)<0$ for any $s_1\le s<s_2$ and 
 $g'(s_2)=0$. Hence  $g''(s_2)\ge 0$. Thus the equation \eqref{eq-2nd-asymp-q} for $g=g_\lambda$ evaluated at $s_2$ implies that 
\begin{equation*} 
0\leq g''+\left\{\frac{n-2-(n+2)m}{1-m}+\frac{\beta C_*}{m} g^{\frac{1}{m}-1}\right\}g'=\frac{2m(n-2-nm)}{(1-m)^2}\left(g-g^{\frac{1}{m}}\right) .
\end{equation*}
On the other hand the right hand side of the above equation is negative by  \eqref{eq-q>1}. Hence a contradiction arises and  the claim \eqref{q-s-derivative<0} follows. 
 
By \eqref{f-lambda-f1-eqn}, \eqref{eq-est-q_s} and \eqref{q-s-derivative<0}, there exists a constant $s_1(1)$ for $g=g_1$ such  that 
\begin{align*}
&g_1'(s)<0\qquad\forall s\ge s_1(1)\\
\Rightarrow\quad&\frac{2}{1-m}f_1(r)+rf'_{1}(r)<0\qquad \forall r\ge R_0:=e^{s_1(1)}\\
\Rightarrow\quad&\frac{d f_{\lambda}}{d\lambda }(r) =\lambda^{\frac{2}{1-m}-1}\left\{\frac{2}{1-m}f_1(\lambda r)+\lambda r \cdot f'_{1}( \lambda r)\right\}{ < 0}\qquad\forall  r\ge R_0/\lambda, \,\,\lambda>0,
\end{align*}
and \eqref{f-lambda-derivative<0}  follows. \eqref{eq-(B)-non-mono} then follows by integrating \eqref{f-lambda-derivative<0} with respect to $\lambda$ over $(\lambda_1,\lambda_2)$.
\end{proof}

\begin{cor} Suppose \eqref{eq-cond-non-mono} holds. Then for any  $\lambda_2>\lambda_1>0$, there exist  constants $r_2>r_0>r_1>0$ such that $f_{\lambda_1}(r_0)=f_{\lambda_2}(r_0)$ and
\begin{enumerate}[(i)]
\item $f_{\lambda_2}(r)>f_{\lambda_1}(r)\qquad\forall 0\leq r< r_1$,
\item $f_{\lambda_2}(r)<f_{\lambda_1}(r)\qquad\forall r>r_2$.
\end{enumerate}
\end{cor}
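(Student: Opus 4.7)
The plan is to combine the opposing signs of $f_{\lambda_2}-f_{\lambda_1}$ near $r=0$ and near $r=\infty$ with the intermediate value theorem. First I would note that since $f_{\lambda}(0)=\lambda^{2/(1-m)}$, we have $f_{\lambda_2}(0)-f_{\lambda_1}(0)=\lambda_2^{2/(1-m)}-\lambda_1^{2/(1-m)}>0$, so the continuity of $f_{\lambda_1}$ and $f_{\lambda_2}$ on $[0,\infty)$ (from Theorem \ref{thm-cigar-soliton-existence}) immediately produces a constant $r_1>0$ such that $f_{\lambda_2}(r)>f_{\lambda_1}(r)$ for all $0\leq r<r_1$, which gives item (i).

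Next I would invoke Lemma \ref{lem-rev-monotonicity} (equivalently \eqref{eq-(B)-non-mono}): under condition \eqref{eq-cond-non-mono} there exists a constant $R_0>0$ such that
\[
f_{\lambda_1}(r)>f_{\lambda_2}(r)\qquad\forall r\geq R_0/\lambda_1.
\]
Choosing any $r_2>\max(r_1,R_0/\lambda_1)$ yields item (ii), namely $f_{\lambda_2}(r)<f_{\lambda_1}(r)$ for all $r>r_2$.

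Finally, since the continuous function $\Psi(r):=f_{\lambda_2}(r)-f_{\lambda_1}(r)$ satisfies $\Psi(r_1/2)>0$ and $\Psi(r_2+1)<0$, the intermediate value theorem supplies a constant $r_0\in(r_1,r_2)$ with $\Psi(r_0)=0$, i.e.\ $f_{\lambda_1}(r_0)=f_{\lambda_2}(r_0)$. This gives the required ordering $r_2>r_0>r_1>0$.

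There is no real obstacle here: all three conclusions are immediate consequences of results already established, namely the initial condition $f_{\lambda}(0)=\lambda^{2/(1-m)}$ from Theorem \ref{thm-cigar-soliton-existence}, the reverse monotonicity at infinity from Lemma \ref{lem-rev-monotonicity}, and the intermediate value theorem for the continuous difference $f_{\lambda_2}-f_{\lambda_1}$. If anything requires care, it is merely choosing $r_1$, $r_2$ in the right order so that a crossing point $r_0$ lies strictly between them, which is handled by shrinking $r_1$ and enlarging $r_2$ as needed.
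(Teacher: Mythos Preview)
Your proposal is correct and follows essentially the same approach as the paper's proof: both use the initial condition $f_{\lambda}(0)=\lambda^{2/(1-m)}$ with continuity for (i), Lemma \ref{lem-rev-monotonicity} for (ii), and the intermediate value theorem for the crossing point $r_0$. The only cosmetic difference is that the paper applies the intermediate value theorem directly on $(r_1,r_2)$ rather than on the enlarged interval $(r_1/2,r_2+1)$, but as you note, adjusting $r_1$ and $r_2$ afterward handles this trivially.
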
 
\begin{proof}
Since $f_{\lambda_2}(0)=\lambda_2^{\frac{2}{1-m}}>\lambda_1^{\frac{2}{1-m}}
=f_{\lambda_1}(0)$, by continuity there exists $r_1>0$ such that (i) holds. By Lemma \ref{lem-rev-monotonicity} there exists  a constant $r_2>r_1>0$ such that (ii)  holds true. By  (i), (ii) and the intermediate value theorem, there exists a constant $r_0\in (r_1,r_2)$ such that $f_{\lambda_1}(r_0)=f_{\lambda_2}(r_0)$ and the corollary follows.
\end{proof} 

\begin{lemma}\label{lem-diff-solitons-integrable}
Let $\lambda_2>\lambda_1>0$. Then the following holds.

\begin{enumerate}[(a)]
 
\item 
If \eqref{eq-cond-mono} holds, then   
\begin{equation}\label{eq-exp-integ10}
n-\frac{2}{1-m}-\gamma_1(m,\beta)>0
\end{equation} 
and $f_{\lambda_1}-f_{\lambda_2}\not\in L^1(\R^n)$. 

\item 
If either \eqref{eq-cond-non-mono} or \eqref{eq-cond-mono-beta1} holds, then 
$f_{\lambda_1}-f_{\lambda_2}\in L^1(\R^n)$. 

\end{enumerate}
\end{lemma}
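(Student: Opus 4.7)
The plan is to combine the second-order expansions from Theorem~\ref{thm-self-sol-2nd-asymp} with a careful comparison of $\mu := n - \frac{2}{1-m} = \frac{n-nm-2}{1-m} > 0$ and the characteristic roots $\gamma_1(m,\beta) \le \gamma_2(m,\beta)$ of \eqref{eq-char}. Setting $\gamma = \gamma_1(m,\beta)$ in cases \eqref{eq-cond-mono} and \eqref{eq-cond-non-mono} and $\gamma = \gamma_2(m,\beta_1(m)) = 2$ in case \eqref{eq-cond-mono-beta1}, Theorem~\ref{thm-self-sol-2nd-asymp} gives
\begin{equation*}
f_{\lambda_1}(r) - f_{\lambda_2}(r) \,=\, \epsilon \cdot C_*^{\frac{1}{1-m}} B\bigl(\lambda_1^{-\gamma} - \lambda_2^{-\gamma}\bigr)\, r^{-\frac{2}{1-m}-\gamma}\bigl(1 + o(1)\bigr)\quad\text{as } r\to\infty,
\end{equation*}
where $\epsilon = \pm 1$ depends on the case but in any event the coefficient is a nonzero constant (since $\lambda_1 \ne \lambda_2$ and $B > 0$). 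Passing to polar coordinates, the radial integral $\int_R^\infty r^{n-1-\frac{2}{1-m}-\gamma}\,dr$ controls the behavior at infinity and converges if and only if $\mu - \gamma < 0$. Combined with smoothness and positivity of the profiles on compact subsets of $\R^n$, this reduces both conclusions of the lemma to pointwise inequalities between $\mu$ and $\gamma$.

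Next I would evaluate the characteristic polynomial $P(\gamma) := \gamma^2 - \tfrac{A_0(m,\beta)}{1-m}\gamma + \tfrac{2(n-2-nm)}{1-m}$ at $\gamma = \mu$. Using $(1-m)\mu = n-nm-2$ together with \eqref{eq-def-A_0(be)}, a short computation yields
\begin{equation*}
P(\mu) \,=\, \frac{2\mu(\beta_1 - \beta)}{(1-m)\beta_1},
\end{equation*}
so that $P(\mu)$ has the sign of $\beta_1(m) - \beta$. In addition, since $\gamma_1 + \gamma_2 = A_0/(1-m)$ is strictly increasing in $\beta$ (as $\partial_\beta A_0 = 2(n-2-nm) > 0$) while $\gamma_1\gamma_2 = 2(n-2-nm)/(1-m)$ is independent of $\beta$, the smaller root $\gamma_1$ is decreasing and the larger root $\gamma_2$ is increasing in $\beta$. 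These two facts pin down $\mu$ in each regime. Under \eqref{eq-cond-mono}(i) one has $P(\mu) < 0$ and therefore $\gamma_1 < \mu < \gamma_2$ directly. Under \eqref{eq-cond-mono}(ii), at $\beta = \beta_1(m)$ one computes $\gamma_1 = 2$ and $\gamma_2 = \mu = \frac{n-2-nm}{1-m}$ (using $m < \frac{n-4}{n-2}$); decreasing $\beta$ from $\beta_1$ pushes $\gamma_2$ below $\mu$, so $\mu > \gamma_2 > \gamma_1$. Both subcases deliver \eqref{eq-exp-integ10}. Under \eqref{eq-cond-non-mono} the hypotheses force $m \ge \frac{n-4}{n-2}$ (automatic when $n\le 4$), so at $\beta = \beta_1(m)$ one has $\mu = \gamma_1 < \gamma_2 = 2$; decreasing $\beta$ pushes $\gamma_1$ above $\mu$, giving $\mu - \gamma_1 < 0$. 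Under \eqref{eq-cond-mono-beta1} one takes $\gamma = 2$, and $\mu - 2 < 0$ rearranges algebraically to $m > \frac{n-4}{n-2}$, which is automatic for $n \le 4$ and assumed for $n > 4$.

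The main subtlety I anticipate is subcase \eqref{eq-cond-mono}(ii), in which $P(\mu) \ge 0$ leaves open both alternatives $\mu \le \gamma_1$ and $\mu \ge \gamma_2$; resolving it requires the explicit endpoint computation of $(\gamma_1,\gamma_2,\mu)$ at $\beta = \beta_1(m)$ (where $\mu$ coincides with $\gamma_2$ rather than $\gamma_1$ precisely because $m < \frac{n-4}{n-2}$) together with the $\beta$-monotonicity of the roots recorded above. Once $\mu$ is placed relative to $\gamma$ in each regime, the $L^1$ dichotomy follows at once from the radial integrability criterion of the first paragraph, completing both parts of the lemma.
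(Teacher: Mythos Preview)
Your proposal is correct. Both your argument and the paper's reduce the question to the sign of $\mu-\gamma$ with $\mu:=n-\tfrac{2}{1-m}$, using the asymptotics of Theorem~\ref{thm-self-sol-2nd-asymp} to sandwich $|f_{\lambda_1}-f_{\lambda_2}|$ between multiples of $r^{-\frac{2}{1-m}-\gamma}$ for large $r$ (this is exactly the paper's estimate \eqref{eq-est-L1-diff}). The difference lies in how the sign is established. The paper leans on the earlier Corollary~\ref{cor-sign-A12}: under \eqref{eq-cond-mono}(i) and \eqref{eq-cond-non-mono} it writes $\mu-\gamma_1=\tfrac{1}{(1-m)\beta_1}-\gamma_1$ and compares it with $\tfrac{1}{(1-m)\beta}-\gamma_1=\tfrac{1}{\beta}A_1(m,\beta)$, whose sign is already known; for \eqref{eq-cond-mono}(ii) it instead substitutes the explicit formula \eqref{eq-roots-gamma-intro} and bounds $2(n-2-nm)-A_0(m,\beta)$ using $\beta\le\beta_1$. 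You proceed more uniformly: you compute $P(\mu)=\tfrac{2\mu(\beta_1-\beta)}{(1-m)\beta_1}$ once, and then use Vieta's relations (constant product, sum increasing in $\beta$) to see that $\gamma_1$ decreases and $\gamma_2$ increases in $\beta$, together with the explicit identification of the roots $(2,\mu)$ at $\beta=\beta_1$ to resolve the ambiguous cases. Your route avoids invoking Corollary~\ref{cor-sign-A12} and handles all subcases by the same mechanism; the paper's route reuses machinery already in place. Both are short and valid.
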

\begin{proof}
Suppose \eqref{eq-cond-mono}, \eqref{eq-cond-non-mono} or \eqref{eq-cond-mono-beta1} holds. By Theorem \ref{thm-self-sol-2nd-asymp} there is  a constant $B\ne 0$ such that 
\begin{equation*}
f_1(r)= \left(\frac{C_*}{ r^2}\right)^{ \frac{1}{1-m}}\Big\{\,1+  Br^{-\gamma}+o (r^{-\gamma})\,\Big\}\quad\mbox{ as }r=|y|\to\infty 
\end{equation*}
where $\gamma=\gamma_1(m,\beta)$ if \eqref{eq-cond-mono} or \eqref{eq-cond-non-mono} holds, and $\gamma=\gamma_2(m,\beta_1(m))=2$ if \eqref{eq-cond-mono-beta1} holds.
Hence for each $0<\ve<1$,  there exists a  constant    $R_\ve>0$   such that
\begin{align*}
&\left(\frac{C_*}{r^2}\right)^{\frac{1}{1-m}}\Big\{1+  \big(B-\ve{|B|}  \big)r^{-\gamma} \Big\}\leq f_{1}(r)\leq \left(\frac{C_*}{r^2}\right)^{\frac{1}{1-m}}\Big\{1+\big(B+\ve{|B|}  \big)r^{-\gamma} \Big\}\quad\forall r\geq R_\ve. 
\end{align*}
 This together with \eqref{f-lambda-f1-eqn} implies that 
\begin{align*}
\left(\frac{C_*}{r^2}\right)^{\frac{1}{1-m}}\Big\{1+ \big(B-\ve{|B|}  \big)\ld^{-\gamma}r^{-\gamma}\Big\}&\le f_{\ld}(r)\\
&\leq \left(\frac{C_*}{r^2}\right)^{\frac{1}{1-m}}\Big\{1+ \big(B+\ve{|B|}  \big)\ld^{-\gamma}r^{-\gamma}\Big\}\quad\forall r\geq R_\ve/\ld,\,\, \lambda>0.
\end{align*}
Hence by choosing a sufficiently small $\ve>0$, there exist constants $R_1$ and $C_2>C_1>0$ such that
\begin{equation}\label{eq-est-L1-diff}
 C_1\int_{R}^\infty r^{n-1 -\frac{2}{1-m}-\gamma}dr\le\int_{|y|\geq R}\left|f_{\lambda_1}-f_{\lambda_2}\right| dy \leq C_2\int_{R}^\infty r^{n-1 -\frac{2}{1-m}-\gamma}dr\quad\forall  R\geq R_1/\lambda_1.
\end{equation} 
  Note that   for $j=1,2$,
 \begin{equation}\label{eq-exp-integ}
 n-\frac{2}{1-m}-\gamma_j(m,\beta) = \frac{1}{(1-m)\beta_1(m)}-\gamma_j(m,\beta).
 \end{equation} 
We now divide the proof into four cases.

\noindent{\bf Case 1}: (i) of \eqref{eq-cond-mono} holds. 

\noindent Note that $\gamma=\gamma_1(m,\beta)$ in \eqref{eq-est-L1-diff} by   Theorem \ref{thm-self-sol-2nd-asymp}. Then by  (i) of \eqref{eq-cond-mono}, \eqref{eq-def-A_j}  and   Corollary \ref{cor-sign-A12}, we have
 $$
\frac{1}{(1-m)\beta _1(m)}-\gamma_1(m,\beta)\geq  \frac{1}{(1-m)\beta}-\gamma_1(m,\beta) =  \frac{1}{\beta}\, A_1(m,\beta)>0 
$$
since $\beta>\beta_1(m)$. This estimate together with \eqref{eq-exp-integ} implies that \eqref{eq-exp-integ10} holds and the integral on the left hand side of \eqref{eq-est-L1-diff} is  infinite. Hence  $f_{\lambda_1}-f_{\lambda_2}\not\in L^1(  \R^n) $. 

\noindent{\bf Case 2}: (ii) of \eqref{eq-cond-mono} holds. 

\noindent By Theorem \ref{thm-self-sol-2nd-asymp}, we note that $\gamma=\gamma_1(m,\beta)$ in \eqref{eq-est-L1-diff}.  By \eqref{eq-def-A_0(be)} and (ii) of \eqref{eq-cond-mono}, it follows that
\begin{align*}
 2(n-2-nm) -A_0(m,\beta)&= n-2-(n-2)m-2(n-2-nm)\beta\\
 &\geq  n-2-(n-2)m-2(n-2-nm)\beta_1(m)\\
 &=  n-4-(n-2)m.
\end{align*}
Hence by \eqref{eq-roots-gamma-intro} and (ii) of \eqref{eq-cond-mono}, we deduce that
\begin{align*}
 &n-\frac{2}{1-m}-\gamma_1(m,\beta) \\
 =&\frac{1}{2(1-m)} \left\{ 2(n-2-nm) -A_0(m,\beta) +  \sqrt{A_0(m,\beta)^2-8(n-2-nm)(1-m)}\right\}\\
 =&\frac{1}{2(1-m)} \left\{ n-4-(n-2)m +  \sqrt{A_0(m,\beta)^2-8(n-2-nm)(1-m)}\right\}\\
 >&0 ,
 \end{align*}
which  implies that \eqref{eq-exp-integ10} holds and the integral on the left hand side  of \eqref{eq-est-L1-diff} is infinite. Hence $f_{\lambda_1}-f_{\lambda_2}\not\in L^1(\R^n)$. 

\noindent{\bf Case 3}: \eqref{eq-cond-non-mono} holds. 

\noindent By  \eqref{eq-cond-non-mono}, \eqref{eq-def-A_j}  and   Corollary \ref{cor-sign-A12}, we have 
 $$
\frac{1}{(1-m)\beta _1(m)}-\gamma_1(m,\beta)<\frac{1}{(1-m)\beta}-\gamma_1(m,\beta)=\frac{ A_1(m,\beta)}{\beta}<0 .
$$
This together with \eqref{eq-exp-integ} implies that  the integral on the right hand side  of \eqref{eq-est-L1-diff} is  finite.  Hence  $f_{\lambda_1}-f_{\lambda_2}\in L^1(\R^n)$. 
 
 \noindent{\bf Case 4}:  \eqref{eq-cond-mono-beta1} holds. 

\noindent Since $\gamma=2$  by Theorem \ref{thm-self-sol-2nd-asymp},    \eqref{eq-cond-mono-beta1} implies
$$ 
n-\frac{2}{1-m}-\gamma=\frac{ n-4-(n-2)m}{1-m}<0  .
$$
This yields that the integral on the right hand side  of \eqref{eq-est-L1-diff} is  finite.  Hence  $f_{\lambda_1}-f_{\lambda_2}\in L^1(\R^n)$ and the lemma follows.  
\end{proof}

\begin{proof}[\textbf{Proof of Theorem \ref{thm-self-sol-2nd-asymp-mono}}] 
Theorem \ref{thm-self-sol-2nd-asymp-mono} follows directly from  Lemmas \ref{lem-monotonicity},   \ref{lem-rev-monotonicity} and   \ref{lem-diff-solitons-integrable}. 
\end{proof}

\section{Existence of  solutions to the fast diffusion equation}\label{sec-exist}
 
In this section we will establish the existence of solutions to the fast diffusion equation    \eqref{eq-fde}   based on the result of \cite{Hsu2}, provided that  the initial value $u_0$  is   close in $L^1(\R^n)$ to  the initial value  of  a   self-similar solution to \eqref{eq-fde}.    Let  $n\geq 3$,  $0<m< \frac{n-2}{n}$ and $ \beta \geq  \beta_e(m)=\frac{m}{n-2-nm} $ with $\alpha=   \frac{2\beta+1}{1-m}$. We recall that  for any $T>0$ and  $\lambda>0$,  a self-similar solution $U_\lambda$  to   \eqref{eq-fde} in $\R^n\times(0,T)$   is given by 
$$
U_\lambda(x,t)=(T-t)^\alpha f_{\lambda}\left((T-t)^\beta x\right)\qquad\forall (x,t)\in\R^n\times(0,T) 
$$  
where   $ f_{\lambda}$  is the solution of \eqref{eq-ellip}. 

\begin{proof} [\textbf{Proof of Theorem \ref{thm-existence-fde}}]
 \noindent{\bf Step 1.}   In order to prove   the existence of  the short time  solution to  \eqref{eq-fde} in $\R^n\times(0,T_0)$ for some constant $T_0>0$, we will apply Theorem 1.1 of  \cite{Hsu2} using  the initial assumption  \eqref{eq-initial-soliton-L1}.   By  \eqref{eq-initial-soliton-L1},  it follows that 
\begin{equation}\label{eq-existence-sol-short-time}
\begin{split}
\liminf_{R\to\infty}\frac{1}{R^{n-\frac{2}{1-m}}}\int_{|x|\leq R} u_0(x)dx&= \liminf_{R\to\infty}\frac{1}{R^{n-\frac{2}{1-m}}}\int_{|x|\leq R} U_{\lambda_0}(x,0)dx\\
&=T^{ \frac{1}{1-m} }\cdot \liminf_{R\to\infty}\frac{1 }{(T^\beta R)^{n-\frac{2}{1-m}}}\int_{|y|\leq T^{\beta}R} f_{\lambda_0}(y)dy.
\end{split}
\end{equation} 
Here we used that $0<m< \frac{n-2}{n}$, and $\alpha=\frac{2\beta+1}{1-m}$. By \eqref{eq-1st-asymp-ellip} 
there exists a   constant $c_0>0$ such that   
\begin{equation}\label{eq-est-ex-C_0}
\liminf_{r\to\infty} \frac{1}{r^{n-\frac{2}{1-m}}}\int_{|y|\leq r} f_{\lambda_0}(y)dy\geq c_0>0.
\end{equation}
Let  $C_1>0$ be the constant appearing   in Theorem 1.1 of \cite{Hsu2}. By 
\eqref{eq-existence-sol-short-time} and \eqref{eq-est-ex-C_0}, we have 
$$
\liminf_{R\to \infty}\frac{1}{R^{n-\frac{2}{1-m}}}\int_{|x|\leq R} u_0(x)dx\,\geq c_0T^{\frac{1}{1-m}}=C_1 T_0^{\frac{1}{1-m}}
$$
with a constant $T_0= (c_0/C_1)^{1-m}\, T>0$. Therefore by \cite[Theorem 1.1]{Hsu2}  there   exists a unique positive solution    $u\in C^\infty\left(\R^n\times(0,T_0)\right)$ of  \eqref{eq-fde}  in $\R^n\times(0,T_0)$ which satisfies \eqref{ab-ineqn}   in $ \R^n\times(0,T_0) $. 

 Let $T_*>0$ be the maximal existence time of the   positive solution    $u\in C^\infty\left(\R^n\times(0,T_*)\right)$  to \eqref{eq-fde} in $ \R^n\times(0,T_*) $ which satisfies \eqref{ab-ineqn} in $\R^n\times(0,T_*)$.
 Note that $T_*\geq T_0>0.$ 
  By utilizing the initial condition  \eqref{u0<U-lambda_2},   the approximating procedure for the   the construction  of the solution $u$ in the proof of     \cite[Theorem 1.1]{Hsu2} yields that  \eqref{u-u-lambda-ineqn2} holds in $\R^n\times(0,T_*)$.    
   In particular,  we deduce from  \eqref{u-u-lambda-ineqn2} that  $0< T_*\leq T.$  Furthermore,  
if $u_0$ also satisfies \eqref{u0>U-lambda}, then 
 by the construction of the solution of \eqref{eq-fde}  in \cite{Hsu2}, \eqref{u>u-lambda-ineqn1} holds in $\R^n\times(0,T_*)$.   

\medskip
 
\noindent{\bf Step 2.} 
Next we will   establish   the $L^1$-contraction principle  \eqref{eq-exist-sol-contraction}   for any $0<t<T_*$. We first observe that by \eqref{eq-1st-asymp-ellip}, for any $0<S<T_*$ there exists a   constant $c_S>0$   such that 
$$
U_{\lambda_0}(x,t)\geq  c_{S}\min\left(1, |x|^{-\frac{2}{1-m}}\right)\quad \forall (x,t)\in\R^n\times(0,S).
$$  
Using  this  estimate and   the initial condition \eqref{eq-initial-soliton-L1},  and applying Lemma 5.1 of \cite{Hui}, we deduce  the $L^1$-contraction principle  \eqref{eq-exist-sol-contraction}  for any $0<t<T_*$. 

\medskip

\noindent{\bf Step 3.} 
Lastly, we will   prove that $T_*=T.$ Suppose to the contrary that $0<T_*<T.$
Since  $|u(\cdot,t)-U_{\lambda_0}(\cdot,t)|\in L^1(\R^n)$ for any $0<t<T_*$ by \eqref{eq-exist-sol-contraction},  direct computation shows  that  for any $0<t<T_*,$
\begin{align*}
\liminf_{R\to\infty}\frac{1}{R^{n-\frac{2}{1-m}}}\int_{|x|\leq R} u(x,t)dx
=&\liminf_{R\to\infty}\frac{1}{R^{n-\frac{2}{1-m}}}\int_{|x|\leq R} U_{\lambda_0}(x,t)dx\\
=&(T-t)^{\frac{1}{1-m}}\liminf_{R\to\infty}\frac{1 }{\left\{(T-t)^\beta R\right\}^{n-\frac{2}{1-m}}}\int_{|y|\leq (T-t)^{\beta}R} f_{\lambda_0}(y)dy.
\end{align*}
This together with \eqref{eq-est-ex-C_0} implies that for any $0<t<T_*$,
\begin{equation}\label{u-l1-loc-norm-lower-bd}
\liminf_{R\to\infty}\frac{1}{R^{n-\frac{2}{1-m}}}\int_{|x|\leq R} u(x,t)dx 
\,\ge\, c_0 (T-t)^{\frac{1}{1-m}}.
\end{equation} 
We now choose  a constant $\delta\in(0, T-T_*)$ such that  
\begin{equation*}
\delta \,<   \,\frac{1}{2}\left(\frac{c_0}{C_1}\right)^{1-m}(T-T_*-\delta),
\end{equation*}  
where we recall that  $C_1>0$ is the constant appearing   in Theorem 1.1 of \cite{Hsu2}. 
Then it follows  from  \eqref{u-l1-loc-norm-lower-bd} that 
\begin{align*}
\liminf_{R\to\infty}\frac{1}{R^{n-\frac{2}{1-m}}}\int_{|x|\leq R} u(x,T_*-\delta)dx &\geq c_0 (T-T_*+ \delta)^{\frac{1}{1-m}} > C_1(2\delta)^{\frac{1}{1-m}}.
\end{align*} 
Thus by Theorem 1.1 of \cite{Hsu2}, there exists a unique  positive solution $u_1$ to \eqref{eq-fde}  in $\R^n\times(0,2\delta)$  with initial value $u_1(x,0)=u(x,T_*-\delta)$. Since by \eqref{ab-ineqn},
\begin{equation*}
u_t(x,T_*-\delta)\leq \frac{u(x,T_*-\delta)}{(1-m)(T_*-\delta)},
\end{equation*}
  an argument similar to \cite{Hsu2} shows that  $u_1$ satisfies
\begin{equation}\label{ab-ineqn3}
u_{1,t}\le\frac{u_1}{(1-m)(t+T_*-\delta)}\quad\mbox{ in }\R^n\times(0,2\delta).
\end{equation}
  Then we extend $u$ to a function on $  \R^n\times (0, T_*+\delta)$ by letting $u(x,t)=u_1(x,t-T_*+\delta)$ in $  \R^n\times [T_*, T_*+\delta)$.  The extended function  $u$ is then a  solution of \eqref{eq-fde} in $\R^n\times (0,T_*+\delta)$ and 
 $u$ satisfies \eqref{ab-ineqn} in $\R^n\times (0,T_*+\delta)$ by \eqref{ab-ineqn3}.  
  However this contradicts the definition of the maximal existence time  $T_*$. Hence  $T_*=T$ and the theorem follows. 
\end{proof}

\section{Vanishing behavior near the  extinction time}\label{sec-extinction} 
This section is devoted to the study of  vanishing behavior  of solutions to  the fast diffusion equation \eqref{eq-fde}  near the extinction time when either \eqref{eq-cond-mono} or \eqref{eq-cond-non-mono} holds. We recall that the rescaled solution $\tilde u=\tilde u(y,\tau)$ is given  by \eqref{u-tilde-defn} and we are concerned with the large time asymptotics of $\tilde u$ as  $\tau\to\infty$. 

\subsection{Monotone increasing case \eqref{eq-cond-mono}}
\hspace{1cm}

\noindent Let
\begin{equation*}
L^1\left( \cC^{p_0};\R^n \right)=\left\{f:\int_{\R^n}|f(x)|\cC^{p_0} (x)\,dx<\infty\right\}
\end{equation*}
where $\cC(x)$ is given by \eqref{eq-def-tildeC} and 
\begin{equation}\label{p0-defn}
p_0:= \frac{1-m}{2}\left(n-\frac{2}{1-m}-\gamma_1\right).
\end{equation} 
Here $\gamma_1=\gamma_1(m,\beta)>0$ is given by  \eqref{eq-roots-gamma-intro}. Note that by \eqref{eq-exp-integ10} and \eqref{p0-defn},
\begin{equation}\label{p0-range}
0<p_0<\frac{(1-m)(n-2)}{2}\quad\mbox{ and }\quad n-\frac{2p_0}{1-m}=\frac{2}{1-m}+\gamma_1.
\end{equation}
Hence the weight function $\cC^{p_0}$ is integrable near the origin. Now we will prove  the weighted $L^1$-contraction principle for rescaled  solutions  trapped in between two self-similar profiles when the monotone increasing condition \eqref{eq-cond-mono} holds.    The following lemma can be regarded as  an extension of    Lemma 5.3 of \cite{DKS} which holds for the case $m=\frac{n-2}{n+2}$ and $n\ge 3$.  
  
\begin{prop}[weighted $L^1$-contraction principle]\label{prop-strong-contraction}
Suppose  \eqref{eq-cond-mono} holds.   Let $\tilde u_0, \tilde v_0\in L^1_{loc}(\R^n)$ satisfy 
\begin{equation*}
f_{\lambda_1}\leq\tilde u_0,\, \tilde  v_0\leq  f_{\lambda_2}\qquad\hbox{ in } \,\,\R^n
\end{equation*}
for  some constants $\lambda_2\ge \lambda_1>0$, and  
$$\tilde u_0-\tilde v_0\in L^1\left( \cC^{p_0};\R^n \right).$$ 
Let $\tilde u$ and  $\tilde v$ be the positive solutions to the rescaled   equation  \eqref{eq-fde-rescaled} in $\R^n\times (0,\infty)$ with initial values $\tilde u_0$, $ \tilde v_0 $, respectively, such that 
\begin{equation}\label{eq-tilde-sol-comparison} 
f_{\lambda_1}\leq\tilde u,\, \tilde  v\leq  f_{\lambda_2}\qquad\hbox{ in }\,\,\R^n \times(0,\infty).
\end{equation} 
Then  the following  holds. 
\begin{enumerate}[(i)]
\item For any $\tau>0 $, 
\begin{align}\label{eq-est-strong-contraction}
&\big\| ( \tilde u- \tilde v )(\cdot,\tau)\big\|_{L^1\left( \cC^{p_0};\R^n \right)}+a_*\int_0^{\tau}\int_{\R^n}\big|\left(\tilde u- \tilde v\right)  (y,s)\big|\left| \left[\frac{\cC(y)}{f_{\lambda_2}(y)}\right]^{1-m}-1 \right| \cC^{p_0}(y) \,dy\,ds\notag\\
\le&\big\| \tilde u_0- \tilde v_0   \big\|_{L^1\left( \cC^{p_0};\R^n \right)} 
\end{align}
where  
\begin{equation*}
a_*:=\frac{2m\,p_0}{C_*(1-m)}\left( \gamma_1+\frac{2m}{1-m}\right)>0.
\end{equation*}
\item  In addition,
if $  \tilde u_0-\tilde v_0  \not\equiv0$, then  
\begin{equation}\label{u-v-cancellation-ineqn}
\big\|(\tilde u-\tilde v)(\cdot,\tau)\big\|_{L^1\left( \cC^{p_0};\R^n \right)} \,< \,\big\| \tilde u_0- \tilde v_0   \big\|_{L^1\left( \cC^{p_0};\R^n \right)} \qquad\forall \tau>0.
\end{equation}
 \end{enumerate}
\end{prop}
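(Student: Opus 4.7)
The idea is to differentiate the weighted $L^1$ norm in time, apply Kato's inequality to the Laplacian term, integrate the drift by parts, and then exploit an algebraic identity $a_* = A_1(m,\beta)$ coming from the characteristic equation \eqref{eq-char}.

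First I would set $w := \tilde u - \tilde v$, $\psi := \cC^{p_0}$, and subtract the two equations \eqref{eq-fde-rescaled} to obtain $w_\tau = \Delta(\tilde u^m - \tilde v^m) + \alpha w + \beta y\cdot\nabla w$ in $\R^n\times(0,\infty)$. Multiplying by $\mathrm{sgn}(w)\psi$ (rigorously, via a smooth monotone approximation of $\mathrm{sgn}$ and passage to the limit), noting that $\mathrm{sgn}(w) = \mathrm{sgn}(\tilde u^m - \tilde v^m)$ since $x\mapsto x^m$ is monotone, and combining Kato's inequality with integration by parts (whose boundary terms at infinity vanish thanks to the pointwise decay of $w$ and $\psi$ inherited from \eqref{eq-tilde-sol-comparison} and Theorem \ref{thm-self-sol-2nd-asymp}), one arrives at
\begin{equation*}
\frac{d}{d\tau}\int_{\R^n}|w|\psi\,dy \,\leq\, \int_{\R^n}|\tilde u^m - \tilde v^m|\Delta\psi\,dy + (\alpha - n\beta)\int_{\R^n}|w|\psi\,dy - \beta\int_{\R^n}|w|\,y\cdot\nabla\psi\,dy.
\end{equation*}

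Next I would exploit that $\psi(y) = C_*^{p_0/(1-m)}|y|^{-q}$ with $q := 2p_0/(1-m)$ is homogeneous of degree $-q$, giving $y\cdot\nabla\psi = -q\psi$ and $\Delta\psi = \frac{q(q+2-n)}{C_*}\psi\,\cC^{1-m}$. By \eqref{p0-range}, $q+2-n = -\bigl(\gamma_1 + 2m/(1-m)\bigr) < 0$, so $\Delta\psi < 0$. Writing $\tilde u^m-\tilde v^m = Fw$ with $F = m\int_0^1(\theta\tilde u + (1-\theta)\tilde v)^{m-1}d\theta$, the sandwich $f_{\lambda_1}\le\tilde u,\tilde v\le f_{\lambda_2}$ together with $m<1$ gives $F\ge mf_{\lambda_2}^{m-1}$; combined with $\Delta\psi<0$, one obtains
\begin{equation*}
\int |\tilde u^m-\tilde v^m|\Delta\psi\,dy \,\leq\, m\int f_{\lambda_2}^{m-1}|w|\Delta\psi\,dy \,=\, -a_*\int|w|\psi\left(\frac{\cC}{f_{\lambda_2}}\right)^{1-m}dy,
\end{equation*}
while a direct calculation using $\alpha = (2\beta+1)/(1-m)$ and $q = n - 2/(1-m) - \gamma_1$ yields $\alpha - n\beta + \beta q = A_1(m,\beta)$.

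The crux is the algebraic identity $a_* = A_1(m,\beta)$. I would verify it by substituting $p_0 = (1-m)\bigl(n-2/(1-m)-\gamma_1\bigr)/2$ and $C_* = 2m(n-2-nm)/(1-m)$ into the definition of $a_*$ to reduce it to $\bigl[1-\beta_1(1-m)\gamma_1\bigr]\bigl[(1-m)\gamma_1+2m\bigr]/[2(1-m)]$, then using the characteristic equation in the form $(1-m)\gamma_1^2 = A_0\gamma_1 - 2/\beta_1$ together with the easily verified relation $A_0 + 2m = (1+2\beta)/\beta_1$ to simplify this to $\bigl[1-(1-m)\beta\gamma_1\bigr]/(1-m) = A_1$. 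This identity is the heart of the proof: it says that the drift loss term $A_1\int|w|\psi$ is exactly compensated by the pointwise-$1$ part of the Laplacian gain. Plugging it in yields
\begin{equation*}
\frac{d}{d\tau}\int|w|\psi\,dy \,\leq\, -a_*\int|w|\psi\left[\left(\frac{\cC}{f_{\lambda_2}}\right)^{1-m} - 1\right]dy,
\end{equation*}
and since $f_{\lambda_2} < \cC$ by \eqref{eq-est-f-ld-tildeC} under \eqref{eq-cond-mono}, the bracket is nonnegative and equals its own absolute value; integrating in time gives \eqref{eq-est-strong-contraction}. For (ii), the bracket is strictly positive pointwise on $\R^n$, so if equality held in \eqref{u-v-cancellation-ineqn}, the spacetime integral in \eqref{eq-est-strong-contraction} would vanish, forcing $w\equiv0$ on $\R^n\times[0,\tau]$ and contradicting $\tilde u_0\not\equiv\tilde v_0$.

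The main obstacles are the rigorous justification of the regularized Kato argument and vanishing of boundary terms at infinity with the singular weight $\psi$, and the algebraic verification $a_* = A_1$, which essentially uses the characteristic equation for $\gamma_1$ and is the reason the particular exponent $p_0$ in \eqref{p0-defn} was chosen.
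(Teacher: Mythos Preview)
Your approach is the same as the paper's: Kato inequality applied to $|\tilde u-\tilde v|$, testing against the weight $\cC^{p_0}$, and the exact cancellation coming from the characteristic equation. Your identity $a_*=A_1(m,\beta)$ is precisely the paper's computation, split there into two pieces: $\frac{m}{C_*}|y|^2\Delta\cC^{p_0}=-a_*\cC^{p_0}$ and $\frac{m}{C_*}|y|^2\Delta\cC^{p_0}-\beta y\cdot\nabla\cC^{p_0}+(\alpha-n\beta)\cC^{p_0}=0$, which together say $-a_*+\beta q+(\alpha-n\beta)=0$, i.e.\ $a_*=A_1$.

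One technical point deserves more care than you give it. Your claim that the boundary terms at infinity vanish ``thanks to the pointwise decay'' is borderline and in fact does not follow directly: from \eqref{eq-tilde-sol-comparison} and Theorem~\ref{thm-self-sol-2nd-asymp} one has $|w|\le|f_{\lambda_1}-f_{\lambda_2}|\lesssim|y|^{-\frac{2}{1-m}-\gamma_1}$ while $\cC^{p_0}\sim|y|^{-(n-\frac{2}{1-m}-\gamma_1)}$, so $|w|\cC^{p_0}\lesssim|y|^{-n}$, which is \emph{not} integrable at infinity, and the cutoff errors over $B_{2R}\setminus B_R$ are $O(1)$ rather than $o(1)$ as $R\to\infty$. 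The paper resolves this with a two-pass argument: a first pass with cutoffs $\eta_\ve,\eta_R$ yields the crude bound $\int_{\R^n}|w(\cdot,\tau)|\cC^{p_0}\le\int_{\R^n}|w_0|\cC^{p_0}+C\tau$, in particular $|w|\cC^{p_0}\in L^1(\R^n\times(0,\tau))$; a second pass then sends $R\to\infty$ using dominated convergence on $\int_0^\tau\int_{B_{2R}\setminus B_R}|w|\cC^{p_0}$. The singularity of the weight at the origin is handled by an inner cutoff $\eta_\ve$, whose error vanishes because $n-2-\tfrac{2p_0}{1-m}=\gamma_1+\tfrac{2m}{1-m}>0$.
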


\begin{proof} 
We will use a modification of the proof of Lemma 5.3 of \cite{DKS} to prove this proposition. 

\noindent{\bf Proof of (i)}: Let  $q:=|\tilde u-\tilde v|.$ By the Kato inequality (\cite{K} and p.89 of  \cite{DK}) and \eqref{eq-fde-rescaled},  
\begin{equation}\label{eq-q-diff-rescaled-new}
q_\tau\leq \La (\tilde aq)+\beta\Div(yq)+(\alpha-n\be) q \qquad\hbox{in $\sD'\big(\R^n \times(0,\infty)\big)$} ,
\end{equation}
where 
$$
\tilde a(y,\tau):=\int_0^1 \frac{m \,ds }{ \left(s \tilde u(y,\tau)+(1-s)\tilde v(y,\tau)\right)^{1-m}}    \qquad\forall (y,\tau)\in\R^n \times (0,\infty).
$$
By \eqref{eq-1st-asymp-ellip}, \eqref{eq-est-f-ld-tildeC} and  \eqref{eq-tilde-sol-comparison}, it holds that 
\begin{align}\label{eq-est-tilde-a} 
&\frac{m}{C_*} |y|^2 <\, m f_{\lambda_2}^{m-1}(y)  \,\leq \,\tilde a(y,\tau) \,\leq \, m f_{\lambda_1}^{m-1}(y)\leq  C \left(1+|y|^2\right) \qquad \hbox{in \,\,$\R^n \times(0,\infty)$}
\end{align} 
for some constant $C>0$. 
Let   $\eta\in C_0^\infty(\R^n)$ be a smooth cut-off function    such that $0\leq \eta\leq1, $ $\eta=1$ for $|y|\leq 1,$ and $\eta=0$ for $|y|\geq 2.$ For any $R>2$ and $0<\ve<1,$ let $\eta_{R}(y):=\eta(y/R)$, $\eta_\ve(y):=\eta(y/\ve)$, and $\eta_{\ve,R}(y)=\eta_R(y)-\eta_\ve(y).$ Then there exists a  constant $C>0$ such that $|\D \eta_{\ve,R}|^2 +|\La \eta_{\ve,R}|\leq C\ve^{-2}$   for any $\ve\le |y|\le 2\ve, $ and $|\D \eta_{\ve,R}|^2 +|\La\eta_{\ve,R}|\leq CR^{-2}$ for any $R\leq |y|\leq 2R$.   
Choosing  $\eta_{\ve,R}(y)   \cC ^{p_0}(y)$  as  a test function   in   \eqref{eq-q-diff-rescaled-new}   yields that  for any  $\tau>0,$ 
\begin{align}\label{q-ineqn}
&\int_{\R^n} q (y,\tau)\eta_{\ve,R}(y)\cC^{p_0}(y)  dy-\int_{\R^n} q (y,0)\eta_{\ve,R}(y)\cC^{p_0} (y) dy\notag\\
\le&\int_0^\tau\int_{\R^n} \Big\{\tilde a\La   \cC^{p_0}(y)  -\beta y\cdot\D   \cC^{p_0}(y) +(\alpha-n\be)   \cC^{p_0}(y) \Big\} q\, \eta_{\ve,R}  \,dy\,ds\notag\\
&\qquad  +   \int_0^\tau\int_{B_{2R}\setminus B_R}  \Big\{\tilde a\La \eta_{\ve,R}    \cC^{p_0}(y) +2 \tilde a \D \eta_{\ve,R}\cdot\D   \cC^{p_0}(y)  -\be y\cdot\D \eta_{\ve,R}   \cC^{p_0}(y) \Big\}q\,dy\,ds\notag\\
&\qquad+\int_0^\tau \int_{B_{2\ve}\setminus B_\ve}  \Big\{\tilde a\La \eta_{\ve,R}    \cC^{p_0}(y) 
+2 \tilde a \D \eta_{\ve,R}\cdot\D   \cC^{p_0}(y)  -\be y\cdot\D \eta_{\ve,R}    \cC^{p_0}(y)  \Big\}q\,dy\,ds.
\end{align}
Since $\gamma_1=\gamma_1(m,\beta)$ is a root of the characteristic equation \eqref{eq-char}, by  \eqref{p0-defn}, \eqref{p0-range} and a direct computation, we have
\begin{equation}\label{eq-q-diff-la-C}
 \frac{m }{C_*}|y|^{2}\cdot\La\cC^{p_0}(y)=-\frac{2mp_0}{C_*(1-m)}\left(n-2- \frac{2p_0}{1-m}\right)\cC^{p_0}(y)=-a_*\cC^{p_0}(y) <0\quad\forall y\in\R^n\setminus\{0\},
\end{equation} 
and
\begin{equation}\label{eq-q-diff-rescaled-laplace-test-ft-1}
\begin{aligned}
 &\frac{m}{C_*}|y|^{2} \cdot \La   \cC^{p_0}(y)  -\beta y\cdot\D   \cC^{p_0}(y) +(\alpha-n\be)   \cC^{p_0}(y)\\
 &=   \left\{\frac{m}{C_*}\left(\gamma_1-n+\frac{2}{1-m}\right)\left(\gamma_1+\frac{2m}{1-m}\right)  +\beta\left(n-\frac{2}{1-m}-\gamma_1\right) +\alpha-n\beta \right\}\cC^{p_0}(y)\\
&= \frac{m}{C_*} \left\{ \gamma_1^2- \frac{A_0(m,\beta)}{1-m}\, \gamma_1+\frac{2(n-2-nm)}{1-m} \right\}\cC^{p_0}(y)=0  \qquad\forall y\in\R^n\setminus\{0\}. 
\end{aligned}
\end{equation} 
On the other hand by  \eqref{eq-tilde-sol-comparison} and  Theorem \ref{thm-self-sol-2nd-asymp}, there exists a constant $C>0$ such that 
\begin{equation}\label{eq-tilde-sol-q}
q(y,\tau)\leq|f_{\lambda_1}-f_{\lambda_2}|(y)\leq   C \min\left(1,  |y|^{-\frac{2}{1-m}-\gamma_1}\right)\qquad   \forall (y,\tau)\in \R^n \times(0,\infty).  
\end{equation} 
By \eqref{eq-est-tilde-a}, \eqref{q-ineqn}, \eqref{eq-q-diff-rescaled-laplace-test-ft-1}, \eqref{eq-tilde-sol-q}, and the properties of $\eta_{\ve,R}$, it follows that for any $0<\varepsilon<1$,   $R>2$,  and $\tau>0$, 
\begin{align}\label{eq-q-diff-rescaled-integration-1}
&\int_{\R^n}q(y,\tau)\eta_{\ve,R}(y)\cC^{p_0}(y)\,dy+\int_0^\tau\int_{\R^n}\left(\frac{m}{C_*}|y|^{2}-\tilde a(y,s) \right)\La   \cC^{p_0}(y)q(y,s)\eta_{\ve,R}(y)\,dy\,ds\notag\\
\le&\int_{\R^n}q(y,0)\cC^{p_0} (y)\, dy+C\int_0^\tau\int_{B_{2R}\setminus B_R}q(y,s)\cC^{p_0}(y)\,dy\,ds+C\int_0^{\tau}\int_{B_{2\varepsilon}\setminus B_{\varepsilon}}(1+\varepsilon^{-2})\varepsilon^{-\frac{2p_0}{1-m}}\,dy\,ds \notag\\
\le&\int_{\R^n} q (y,0)  \cC^{p_0} (y)\, dy +C\int_0^\tau\int_{B_{2R}\setminus B_R}q(y,s)\cC^{p_0}(y)\,dy\,ds+C\ve^{n-2 -\frac{2p_0}{1-m}}\,\tau
\end{align}  
for some generic constant $C>0$ which may vary from line to line. By \eqref{eq-est-f-ld-tildeC}, \eqref{eq-est-tilde-a} and
 \eqref{eq-q-diff-la-C}, 
\begin{align}\label{a-tilde-c-relation2}
\left(\frac{m}{C_*}|y|^{2}-\tilde a(y,\tau)\right)\La\cC^{p_0}(y)
&=a_*\cC^{p_0}(y)\left(\,\frac{1}{m}\tilde a(y,\tau)\cC(y)^{1-m}-1 \right)\notag\\
&\ge a_*\cC^{p_0}(y) \left(\left[\frac{  \cC(y)}{f_{\lambda_2}(y)}\right]^{1-m}-1 \right)>0\quad\forall y\in \R^n,\tau>0.
\end{align}
Since   \eqref{p0-range} implies  
\begin{equation*}
n-2-\frac{2p_0}{1-m}\, =\,\gamma_1+\frac{2m}{1-m}>0,
\end{equation*}
 letting $\ve\to 0$ in \eqref{eq-q-diff-rescaled-integration-1}, we deduce  by \eqref{a-tilde-c-relation2}  that 
\begin{align}\label{eq-q-diff-rescaled-integration-2}
&\int_{\R^n}q(y,\tau)\eta_R(y)\cC^{p_0}(y)\,  dy+a_* \int_0^\tau\int_{\R^n}   q (y,s) \eta_R(y)  \cC^{p_0}(y)  \left| \left[\frac{  \cC(y)}{f_{\lambda_2}(y)}\right]^{1-m}-1 \right|\,dy\,ds\notag\\
\le&\int_{\R^n} q (y,0)\cC^{p_0} (y)\, dy + C\int_0^\tau\int_{B_{2R}\setminus B_R}q(y,s)\cC^{p_0}(y)\,dy\,ds\qquad\forall R>2,\,\, \tau>0.
\end{align}
This together with  \eqref{p0-range} and \eqref{eq-tilde-sol-q}  yields  that 
\begin{align}\label{eq-q-diff-rescaled-integration-10} 
&\int_{\R^n} q (y,\tau)\eta_R(y)\cC^{p_0}(y)\,  dy+a_* \int_0^\tau\int_{\R^n}q(y,s)\eta_R(y)\cC^{p_0}(y)  \left| \left[\frac{\cC(y)}{f_{\lambda_2}(y)}\right]^{1-m}-1 \right|\,dy\,ds\notag\\
\le&\int_{\R^n} q (y,0)\cC^{p_0} (y)\, dy +CR^{n-\frac{2}{1-m} -\gamma_1 -\frac{2p_0}{1-m} }\,\tau\notag\\
\le&\int_{\R^n} q (y,0)\cC^{p_0} (y)\, dy +C\tau 
\end{align} 
for some constant $C>0$. Letting $R\to\infty$ in \eqref{eq-q-diff-rescaled-integration-10}, it follows that 
\begin{align*}
&\int_{\R^n} q (y,\tau)\cC^{p_0}(y)\,  dy+a_* \int_0^\tau\int_{\R^n}   q (y,s) \cC^{p_0}(y)  \left| \left[\frac{\cC(y)}{f_{\lambda_2}(y)}\right]^{1-m}-1 \right|\,dy\,ds\notag\\
&\le \int_{\R^n} q (y,0)  \,\cC^{p_0} (y) dy + C\tau.
\end{align*} 
Hence 
\begin{equation}\label{eq-q-diff-rescaled-integration-xt-new-2}
\int_0^\tau\int_{\R^n} q (y,s)\cC^{p_0}(y) \, dy\,ds\leq \tau\int_{\R^n} q (y,0)\cC^{p_0} (y) dy+C\tau^2\qquad\forall \tau>0. 
\end{equation} 
Letting $R\to\infty$ in \eqref{eq-q-diff-rescaled-integration-2} with the use of  \eqref{eq-q-diff-rescaled-integration-xt-new-2}, we get    \eqref{eq-est-strong-contraction}. 

\medskip

\noindent{\bf Proof of (ii)}: Now we also assume that  $q(\cdot,0)=|\tilde u_0-\tilde v_0|\not\equiv0$. Then  
$$
 \int_0^\tau\int_{\R^n}   q(y,s)  \cC^{p_0}(y)  \left| \left[\frac{\cC(y)}{f_{\lambda_2}(y)}\right]^{1-m}-1\right|\,dy\,ds>0\qquad\forall\tau>0.
$$
This  together with  \eqref{eq-est-strong-contraction}  implies  \eqref{u-v-cancellation-ineqn}, and the proposition follows.
\end{proof}

By using  \eqref{p0-range} and Theorem \ref{thm-self-sol-2nd-asymp}, we have the following lemma.

\begin{lemma}
Suppose \eqref{eq-cond-mono} holds. Let $\lambda_2>\lambda_1>0$ and $p_0$ be given by \eqref{p0-defn}.   Then $f_{\lambda_1}-f_{\lambda_2}\not \in L^1( \cC^{p_0};\R^n)$.
\end{lemma}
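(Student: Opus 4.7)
The plan is to read off the non-integrability directly from the sharp second-order asymptotics of $f_\lambda$ in part (a) of Theorem \ref{thm-self-sol-2nd-asymp}. Under condition \eqref{eq-cond-mono} that theorem provides a constant $B>0$, independent of $\lambda$, such that
\begin{equation*}
f_\lambda(r)=\cC(r)\bigl\{1-B\lambda^{-\gamma_1}r^{-\gamma_1}+o(r^{-\gamma_1})\bigr\}\qquad\text{as }r\to\infty,
\end{equation*}
with $\gamma_1=\gamma_1(m,\beta)>0$. Subtracting the expansions for $\lambda_1$ and $\lambda_2$,
\begin{equation*}
f_{\lambda_1}(r)-f_{\lambda_2}(r)=B\bigl(\lambda_2^{-\gamma_1}-\lambda_1^{-\gamma_1}\bigr)\,\cC(r)\,r^{-\gamma_1}+o\bigl(\cC(r)r^{-\gamma_1}\bigr).
\end{equation*}
Since $\lambda_2>\lambda_1>0$ and $B>0$, the leading coefficient is nonzero, so there exist $c>0$ and $R_0>0$ with
\begin{equation*}
|f_{\lambda_1}(r)-f_{\lambda_2}(r)|\;\ge\;c\,\cC(r)\,r^{-\gamma_1}\qquad\forall r\ge R_0.
\end{equation*}

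Next I would plug in $\cC(r)=(C_*/r^2)^{1/(1-m)}$ to compute the behavior of $|f_{\lambda_1}-f_{\lambda_2}|\,\cC^{p_0}$ on $\{|y|\ge R_0\}$, using radial integration on $\R^n$. Up to positive constants this reduces the claim to divergence of
\begin{equation*}
\int_{R_0}^\infty r^{n-1}\,\cC(r)^{1+p_0}\,r^{-\gamma_1}\,dr\;=\;\mathrm{const}\cdot\int_{R_0}^\infty r^{\,n-1-\frac{2(1+p_0)}{1-m}-\gamma_1}\,dr.
\end{equation*}
The identity $n-\tfrac{2p_0}{1-m}=\tfrac{2}{1-m}+\gamma_1$ recorded in \eqref{p0-range} collapses the exponent exactly to $-1$, producing a logarithmic divergence at infinity. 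Hence $(f_{\lambda_1}-f_{\lambda_2})\cC^{p_0}\notin L^1(\R^n)$, which is the claim.

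The whole argument is essentially bookkeeping; there is no genuine analytic obstacle, since the weight $\cC^{p_0}$ was calibrated (via \eqref{p0-defn}) precisely so that the marginal decay rate $\cC\cdot r^{-\gamma_1}$ from the second-order asymptotics sits at the borderline of integrability against it. The only nontrivial ingredients invoked are the positivity of the leading constant $B$ and the precise exponent $\gamma=\gamma_1(m,\beta)$ in the expansion of $f_\lambda$, both of which are supplied by part (a) of Theorem \ref{thm-self-sol-2nd-asymp}.
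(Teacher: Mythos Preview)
Your argument is correct and follows exactly the route the paper indicates: the lemma is stated there without a written proof, merely prefaced by ``By using \eqref{p0-range} and Theorem \ref{thm-self-sol-2nd-asymp}, we have the following lemma,'' and your computation is precisely the intended one. The key observation---that the choice of $p_0$ in \eqref{p0-defn} forces the exponent in the radial integral to equal $-1$---is the whole point, and you have carried it out cleanly.
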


Based on the strong    $L^1$-contraction     principle in Proposition \ref{prop-strong-contraction},   we prove  the convergence   of the rescaled solution $\tilde u(\cdot, \tau)$ to a self-similar profile as $\tau\to\infty$  in Theorem \ref{thm-uniform-convergence-tilde-u-compact} in the monotone case. 
  
\begin{proof}[\textbf{Proof of Theorem \ref{thm-uniform-convergence-tilde-u-compact}}]   
Let $\{\tau_i\}_{i=1}^\infty$ be a sequence such that $\tau_i\to\infty$  as $i\to\infty$  and  let 
\begin{equation}\label{eq-def-scaled-time-shift}
\tilde u_i(y, \tau):=\tilde u(y, \tau_i+\tau)\qquad\forall (y,\tau)\in\R^n\times[-\tau_i,\infty).
\end{equation}  
By \eqref{u-lower-upper-bd-ini}  and  Theorem \ref{thm-existence-fde},  we have 
\begin{align}\label{tilde-u-lower-upper-bd} 
&U_{\lambda_1}\, \leq \,  u\, \leq \, U_{\lambda_2} \quad\hbox{ in } \,\,\R^n\times(0,T)\notag\\
\Rightarrow\quad&f_{\lambda_1}\le\tilde u_i\leq  f_{\lambda_2}\qquad\hbox{ in }\,\,\R^n \times(-\tau_i,\infty),\quad\forall i\in\Z^+.
\end{align}
For any $N>0$, we  choose  $i_{N}\in \Z^+$ such that $\tau_i>N$ for any $i\ge i_{N}$. Then by \eqref{tilde-u-lower-upper-bd},  the equation \eqref{eq-fde-rescaled} for $\{\tilde u_i\}_{i=i_{N}}^{\infty}$ is uniformly parabolic in $B_R\times(-N, \infty) $ for any $R>1$. 
Thus  utilizing  the parabolic Schauder estimates \cite{LSU}, the sequence $\{ \tilde u_i\}_{i=i_N}^\infty$ is equi-H\"older continuous  in $C^{2,1}(K)$ for any compact subset $K\subset \R^n \times(-N, \infty).$
By the Ascoli Theorem and a diagonalization argument, there exist  a subsequence of the sequence $\{\tilde u_i\}_{ i=1}^\infty$, which we still denote by $\{\tilde u_i\}_{i=1}^\infty$, and a  function $\tilde v \in C^{2,1}(\R^n\times\R)$   such that $\tilde u_i$ converges to  $\tilde v$   in $C^{2,1}(K)$ as $i\to\infty$ for any compact set $K\subset\R^n\times\R$. 
Then the limit $\tilde v$ is an eternal solution to \eqref{eq-fde-rescaled} in $\R^n\times\R$  and  satisfies   
\begin{equation*}
f_{\lambda_1}   \leq   \tilde v \leq  f_{\lambda_2} \quad\mbox{ in   {  $\R^n \times\R$}   }
\end{equation*}   
in view of \eqref{tilde-u-lower-upper-bd}. Let $p_0$ be given by \eqref{p0-defn}.
By \eqref{eq-initial-soliton-L1}, \eqref{u-lower-upper-bd-ini} and the fact that $\cC^{p_0}$ is integrable near the origin, it holds that 
\begin{align*}
&u_0-U_{\lambda_0}(\cdot,0)\in L^1\left( \cC^{p_0};\R^n \right)\\ 
\Rightarrow\quad&\tilde u_0 - f_{\lambda_0}\in L^1\left( \cC^{p_0};\R^n \right).
\end{align*} 
Hence by Proposition \ref{prop-strong-contraction},  we have 
\begin{equation}\label{ineqn1}
a_*\int_0^{\infty}\int_{\R^n}\left|\left(\tilde u- f_{\lambda_0}\right)(y,s)\right|\left|\left[\frac{\cC(y)}{f_{\lambda_2}(y)}\right]^{1-m}-1\right|\cC^{p_0}(y)\,dy\,ds
\le  \big\| \tilde u_0- f_{\lambda_0}\big\|_{L^1\left( \cC^{p_0};\R^n \right)}<\infty. 
\end{equation}
Since    for any  $N>0$ and $i\ge i_{N}$,
\begin{align}\label{ineqn2}
&\int_{-N}^{\infty}\int_{\R^n}\big|\left(\tilde u_i- f_{\lambda_0}\right)  (y,s)\big|\left| \left[\frac{\cC(y)}{f_{\lambda_2}(y)}\right]^{1-m}-1 \right|\cC^{p_0}(y)  \,dy\,ds\notag\\
=&\int_{\tau_i-N}^{\infty}\int_{\R^n}\left|\left(\tilde u- f_{\lambda_0}\right)  (y,s)\right|\left| \left[\frac{\cC(y)}{f_{\lambda_2}(y)}\right]^{1-m}-1 \right|\cC^{p_0}(y)  \,dy\,ds,
\end{align}
letting $i\to \infty$ in \eqref{ineqn2},  we deduce from  \eqref{ineqn1} and the Fatou lemma that  
\begin{align*}
&\int_{-N}^{\infty}\int_{\R^n}\big|\left(\tilde v- f_{\lambda_0}\right)  (y,s)\big|\left| \left[\frac{\cC(y)}{f_{\lambda_2}(y)}\right]^{1-m}-1 \right|\cC^{p_0}(y)\,dy\,ds=0 \qquad\forall N>0. 
\end{align*} 
This yields that 
\begin{equation*}
\tilde v\equiv f_{\lambda_0}\quad\mbox{ in }\R^n\times\R.  
\end{equation*}
Hence we conclude that  $\tilde u(\cdot, \tau_i)=\tilde u_i(\cdot, 0)$ converges to $f_{\lambda_0}$ uniformly in $C^2(K)$ as $i\to\infty$ for any  compact subset $K\subset\R^n$. Since the sequence $\{\tau_i\}_{i=1}^{\infty}$ is arbitrary,  $\tilde u (\cdot,\tau)$ converges to $f_{\lambda_0}$ uniformly in $C^2(K)$ as $\tau\to\infty$ for any  compact subset $K\subset\R^n$. 

Lastly we  assume that (i) of \eqref{eq-cond-mono} holds. Note that
\begin{align}\label{u-tilde-u-l1=}
\int_{\R^n} |\tilde u(y,\tau)-f_{\lambda_0}(y)|dy=& (T-t)^{n\beta-\alpha}\int_{\R^n} |u(x,t)-U_{\lambda_0}(x,t)|dx\notag\\
=&T^{n\beta-\alpha}e^{-(n\beta-\alpha)\tau}\int_{\R^n} |u(x,t)-U_{\lambda_0}(x,t)|dx
\end{align}
where $\tau=-\log \{(T-t)/ T\}>0$. Then by \eqref{eq-exist-sol-contraction} and \eqref{u-tilde-u-l1=}, we get \eqref{eq-resc-sol-exp} and the theorem follows.  
\end{proof}

\subsection{Non-monotone case \eqref{eq-cond-non-mono}}

In this subsection, we will prove Theorem \ref{thm-uniform-convergence-tilde-u-0}  regarding the  asymptotic behavior near the extinction time in the non-monotone case of \eqref{eq-cond-non-mono}.     In this case we employ a different approach from  the  monotone case of \eqref{eq-cond-mono}  in order to provide a convergence result of  the rescaled solution $ \tilde u(\cdot,\tau)$  to zero as $\tau\to\infty$. 
 
\begin{proof}[\textbf{Proof of Theorem \ref{thm-uniform-convergence-tilde-u-0}}]
Let $\{\tau_i\}_{i=1}^\infty$ be a sequence such that $\tau_i\to\infty$  as $i\to\infty$  and    
let $\tilde u_i $ be given by \eqref{eq-def-scaled-time-shift}.  By \eqref{eq-tilde-sol-upper-bd-ini} and Theorem \ref{thm-existence-fde},  $u$ satisfies 
\begin{align}
&0\le u\le\min\big(U_{\lambda_1},\,U_{\lambda_2} \big)\quad\,\,\hbox{in }\, \,\R^n\times(0,T)\notag\\ 
\Rightarrow\quad&0\le\tilde u\leq\min\big(f_{\lambda_1},f_{\lambda_2}\big)\qquad\,\hbox{in }\,\,\R^n\times (0,\infty)\label{eq-tilde-sol-upper-bd}\\
\Rightarrow\quad&0\le\tilde u_i\leq\min\big(f_{\lambda_1},f_{\lambda_2}\big)\qquad\hbox{in } \,\,\R^n\times(-\tau_i,\infty),\quad i\in\Z^+.
\label{eq-tilde-sol-i-upper-bd}
\end{align}  
For any $N>0$, we  choose  $i_{N}\in \Z^+$ such that $\tau_i>N$ for any $i\ge i_{N}$.  Then by \eqref{eq-tilde-sol-i-upper-bd} and   Theorem 1.1 of \cite{Sa},   the sequence $\{\tilde u_i\}_{i=i_{N}}^{\infty}$ is equi-H\"older continuous on any  compact subset  $K$ of $\R^n\times (-N,\infty)$. 
Hence by the Ascoli Theorem and a diagonalization argument, there exist a subsequence of the sequence $\{\tilde u_i\}_{ i=1}^\infty$,   still denoted by $\{\tilde u_i\}_{i=1}^\infty$,  and a function  $\tilde u_{\infty} \in C(\R^n\times\R)$ such that $\tilde u_i$ converges to   $\tilde u_{\infty}  $ uniformly in $K$ as $i\to\infty$ for any compact subset $K\subset\R^n\times\R$.
Then the limit function $\tilde u_{\infty}$ solves     \eqref{eq-fde-rescaled} in $\sD'\big(\R^n \times\R\big)$. 
Letting $i\to\infty$ in  \eqref{eq-tilde-sol-i-upper-bd},   
\begin{equation} \label{tilde-u-infty-upper-bd0}
0\leq    \tilde u_{\infty} \leq\min\big(f_{\lambda_1},f_{\lambda_2}\big)\qquad\hbox{ in } \,\,\R^n\times\R.
\end{equation} 

Now we will show that $\tilde u_\infty \equiv 0$ on $\R^n\times\R$.   In order to prove this, we let
\begin{equation}\label{eq-def-ld-tau}
\lambda(\tau):= \inf \big\{ \lambda>0\,:\,   \tilde u(y,\tau)\le f_{\lambda}(y)\quad \forall y\in\R^n\big\}\qquad\forall\tau> 0.
\end{equation}
By \eqref{eq-tilde-sol-upper-bd} and \eqref{eq-def-ld-tau},  $\lambda (\tau)$ is well-defined,
\begin{align}
&\tilde u(y,\tau)\le f_{\lambda(\tau)}(y)\qquad\qquad\qquad\qquad\,\,\forall y\in\R^n,\,\,\tau>0\label{u-tilde-f-compare5} \\
\Rightarrow\quad&0< \tilde u(0,\tau)\leq f_{\lambda(\tau)}(0)=\lambda(\tau)^{\frac{2}{1-m}}\qquad\forall \tau>0,\notag
\end{align}
and
\begin{equation}\label{eq-est-upper-bd-ld=tau}
0<\lambda (\tau)\le\lambda_1\qquad\forall\tau>0.
\end{equation}
By \eqref{eq-tilde-sol-upper-bd} and  Lemma \ref{lem-rev-monotonicity},  there exists a constant $R_*>0$  such that
\begin{equation}\label{eq-est-behavior-at-infty}
0< \tilde u(y,\tau)\leq f_{\lambda_2}(y)<f_{\lambda}(y)\quad\forall 0<\lambda\leq\lambda_1,\,\,|y|\geq R_*/\lambda,\,\, \tau>0.
\end{equation} 
  
\begin{Claim}\label{cla-2-touching} For each $\tau>0,$ $f_{\lambda(\tau)}$ touches $\tilde u(\cdot,\tau)$ from above in $B_{2R_*/\lambda(\tau)}$. 
\end{Claim}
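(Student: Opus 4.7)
The plan is a proof by contradiction: if $f_{\lambda(\tau)}$ did not touch $\tilde u(\cdot,\tau)$ anywhere in $B_{2R_*/\lambda(\tau)}$, I could strictly decrease $\lambda(\tau)$ while preserving the upper bound $\tilde u\le f_\lambda$, violating the infimum in \eqref{eq-def-ld-tau}.

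First I will verify the baseline inequality $\tilde u(\cdot,\tau)\le f_{\lambda(\tau)}$ on $\R^n$ by picking $\lambda_n\downarrow\lambda(\tau)$ admissible in \eqref{eq-def-ld-tau} and passing to the pointwise limit, using that $\lambda\mapsto f_\lambda(y)$ is continuous (immediate from the scaling identity $f_\lambda(r)=\lambda^{\frac{2}{1-m}}f_1(\lambda r)$ in Lemma \ref{lem-scaling}). Then I will suppose for contradiction that $\tilde u(\cdot,\tau)<f_{\lambda(\tau)}$ strictly on the open ball $B_{2R_*/\lambda(\tau)}$. Because $\lambda(\tau)\le\lambda_1$ by \eqref{eq-est-upper-bd-ld=tau}, \eqref{eq-est-behavior-at-infty} at $\lambda=\lambda(\tau)$ already yields strict inequality $\tilde u(y,\tau)\le f_{\lambda_2}(y)<f_{\lambda(\tau)}(y)$ for $|y|\ge R_*/\lambda(\tau)$; combining the two, $f_{\lambda(\tau)}-\tilde u(\cdot,\tau)$ is strictly positive on the closed ball $\overline{B_{2R_*/\lambda(\tau)}}$, and by compactness there is a uniform gap $\delta>0$ there.

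Next I will perturb $\lambda(\tau)$ downward. Using the continuity of $\lambda\mapsto f_\lambda$ furnished by Lemma \ref{lem-scaling}, I choose $\lambda'\in(\lambda(\tau)/2,\lambda(\tau))$ sufficiently close to $\lambda(\tau)$ that $|f_{\lambda'}-f_{\lambda(\tau)}|<\delta/2$ uniformly on $\overline{B_{2R_*/\lambda(\tau)}}$, which gives $\tilde u\le f_{\lambda(\tau)}-\delta<f_{\lambda'}-\delta/2$ there. Meanwhile, on $\{|y|\ge 2R_*/\lambda(\tau)\}$ the lower bound $\lambda'>\lambda(\tau)/2$ ensures $R_*/\lambda'<2R_*/\lambda(\tau)\le|y|$, so \eqref{eq-est-behavior-at-infty} applied with $\lambda=\lambda'\le\lambda_1$ yields $\tilde u(y,\tau)\le f_{\lambda_2}(y)<f_{\lambda'}(y)$. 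Hence $\tilde u(\cdot,\tau)\le f_{\lambda'}$ on all of $\R^n$ with $\lambda'<\lambda(\tau)$, contradicting the infimum characterization of $\lambda(\tau)$.

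The main technical point is the interplay between the two radii: $R_*/\lambda(\tau)$ is the threshold beyond which the reverse monotonicity of Lemma \ref{lem-rev-monotonicity} produces the strict ordering $f_{\lambda_2}<f_{\lambda(\tau)}$, while the larger radius $2R_*/\lambda(\tau)$ in the claim supplies exactly enough slack to accommodate a downward perturbation $\lambda'\in(\lambda(\tau)/2,\lambda(\tau))$ without losing the exterior estimate $|y|\ge R_*/\lambda'$. I expect the continuous dependence of $f_\lambda$ on $\lambda$, together with the bookkeeping of the two-sided bounds on $\lambda'$, to be the only care needed; beyond that the argument is routine.
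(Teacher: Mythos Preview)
Your proof is correct and follows essentially the same contradiction strategy as the paper: assume strict inequality on the ball, use \eqref{eq-est-behavior-at-infty} for the exterior, and then exploit continuity of $\lambda\mapsto f_\lambda$ to find a smaller $\lambda'$ that still dominates $\tilde u(\cdot,\tau)$ everywhere, contradicting the infimum in \eqref{eq-def-ld-tau}. The only difference is cosmetic: the paper lets the comparison ball $\overline{B_{R_*/\lambda}}$ vary with $\lambda$ near $\lambda(\tau)$, whereas you fix the ball at radius $2R_*/\lambda(\tau)$ and impose $\lambda'>\lambda(\tau)/2$ to make the exterior estimate fit --- your bookkeeping makes explicit why the factor $2$ in the claim's radius is exactly what is needed.
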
 
\noindent{\bf Proof of Claim \ref{cla-2-touching}}: Suppose there exists $\tau_1>0$ such that the claim does not hold. Then by \eqref{u-tilde-f-compare5}, 
\begin{equation}\label{u-tidle-f-ineqn}
 \tilde u(y,\tau_1)< f_{\lambda(\tau_1)}(y)\qquad\hbox{ in }B_{ {2R_*/\lambda(\tau_1)}}. 
\end{equation}
By  \eqref{u-tidle-f-ineqn} and the continuity of $\tilde u(\cdot,\tau_1)$ and $f_{\lambda}$, there exists 
a  constant $0<\delta<\lambda(\tau_1)$ such that  
$$
\tilde u(\cdot,\tau_1)< f_{\lambda}\quad \hbox{ in }\overline{B_{ {R_*/\lambda}}}\qquad\forall\lambda(\tau_1)-\delta<\lambda<\lambda(\tau_1). 
$$
 This together with   \eqref{eq-est-upper-bd-ld=tau} and  \eqref{eq-est-behavior-at-infty}  yields that 
\begin{equation*}
 \tilde u(\cdot,\tau_1)< f_{\lambda}\quad \hbox{ in }\R^n\qquad\forall\lambda(\tau_1)-\delta<\lambda<\lambda(\tau_1).
\end{equation*} 
This contradicts the definition of $\lambda(\tau_1)$ in \eqref{eq-def-ld-tau}. Thus no such $\tau_1$ exists and Claim \ref{cla-2-touching} follows. 
\medskip 
  
\begin{Claim}\label{cla-2-mono}
$\lambda(\tau)$ is a strictly decreasing function of $\tau>0$. 
\end{Claim}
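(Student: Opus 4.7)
The plan is to argue by contradiction, combining the parabolic strong maximum principle with the reverse monotonicity of $f_\lambda$ in $\lambda$ characteristic of the non-monotone regime \eqref{eq-cond-non-mono}.

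First I would show that $\tau\mapsto\lambda(\tau)$ is non-increasing. Fixing $\tau_0>0$, the inequality $\tilde u(\cdot,\tau_0)\le f_{\lambda(\tau_0)}$ together with the fact that $f_{\lambda(\tau_0)}$ is a stationary solution of \eqref{eq-fde-rescaled} allows one to invoke comparison (applied to the linear parabolic equation satisfied by the nonnegative difference $f_{\lambda(\tau_0)}-\tilde u$), yielding $\tilde u(\cdot,\tau)\le f_{\lambda(\tau_0)}$ for every $\tau\ge\tau_0$, so $\lambda(\tau)\le\lambda(\tau_0)$ by the very definition \eqref{eq-def-ld-tau}.

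To promote this to strict monotonicity I would suppose for contradiction that $\lambda(\tau_1)=\lambda(\tau_2)=:\bar\lambda$ for some $\tau_2>\tau_1>0$. Applying Claim \ref{cla-2-touching} at time $\tau_2$ yields a point $y_0\in\overline{B_{2R_*/\bar\lambda}}$ at which $\tilde u(y_0,\tau_2)=f_{\bar\lambda}(y_0)$. Writing $v:=f_{\bar\lambda}-\tilde u$ and using the Kato-type identity $f_{\bar\lambda}^m-\tilde u^m=m\,\tilde a\,v$ for a smooth strictly positive coefficient $\tilde a$ (as in the proof of Proposition \ref{prop-strong-contraction}), the nonnegative function $v$ satisfies a linear parabolic equation that is uniformly parabolic on every compact subset of $\mathbb{R}^n$, since both $f_{\bar\lambda}$ and $\tilde u$ are smooth and strictly positive on $\R^n\times[\tau_1,\tau_2]$. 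As $v$ attains its minimum value $0$ at the interior space-time point $(y_0,\tau_2)$, the parabolic strong maximum principle, applied on balls $B_R\times[\tau_1,\tau_2]$ for arbitrarily large $R$, forces $v\equiv 0$ on $\mathbb{R}^n\times[\tau_1,\tau_2]$; in particular $\tilde u(\cdot,\tau_1)\equiv f_{\bar\lambda}$.

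Finally I would exploit this identity together with the envelope \eqref{eq-tilde-sol-upper-bd}, which reads $f_{\bar\lambda}\le\min\bigl(f_{\lambda_1},f_{\lambda_2}\bigr)$ on $\mathbb{R}^n$. Evaluating at the origin, where $f_\lambda(0)=\lambda^{2/(1-m)}$, gives $\bar\lambda\le\lambda_1<\lambda_2$. The reverse monotonicity of Lemma \ref{lem-rev-monotonicity} then produces $f_{\bar\lambda}(r)>f_{\lambda_2}(r)$ for every $r\ge R_0/\bar\lambda$, contradicting $f_{\bar\lambda}\le f_{\lambda_2}$ at infinity. The main delicate step is the backward-in-time propagation of the zero of $v$ across the unbounded domain $\mathbb{R}^n$; this will be handled by a covering/localization argument, exploiting that the linearized operator is uniformly parabolic on each ball and that $v$ is a smooth nonnegative classical solution there.
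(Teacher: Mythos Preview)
Your argument is correct and takes a genuinely different route to the strict monotonicity from the paper's proof.

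For the non-increasing part, both you and the paper rely on comparison, but your phrasing---``comparison applied to the linear parabolic equation satisfied by the nonnegative difference''---is mildly circular, since the nonnegativity of $f_{\lambda(\tau_0)}-\tilde u$ is precisely what is being proved; a maximum principle for the linearized equation on the unbounded domain $\R^n$ would in any case need justification. The paper handles this cleanly by returning to the original variables, writing $u(\cdot,t_1)\le U_{\lambda(\tau_1)}(\cdot,t_1)$, and invoking the approximating construction behind Theorem~\ref{thm-existence-fde} (as for \eqref{u-u-lambda-ineqn2}) to propagate the inequality forward.

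For strictness, the paper argues directly rather than by contradiction: since $\tilde u(\cdot,\tau_1)<f_{\lambda(\tau_1)}$ holds \emph{strictly} near spatial infinity by \eqref{eq-est-behavior-at-infty}, one obtains strict inequality on a large sphere for a short time, and the strong comparison principles in exterior and bounded domains (Lemmas~\ref{lem-strict-comp-prin} and~\ref{lem-strict-comp-prin2}) then yield $\tilde u(\cdot,\tau)<f_{\lambda(\tau_1)}$ on all of $\R^n$ for $\tau_1<\tau<\tau_2$; Claim~\ref{cla-2-touching} forces $\lambda(\tau)<\lambda(\tau_1)$. Your contradiction approach instead combines the touching from Claim~\ref{cla-2-touching} with the interior strong maximum principle on balls to get $\tilde u\equiv f_{\bar\lambda}$, and then derives a contradiction from the envelope $\tilde u\le f_{\lambda_2}$ via Lemma~\ref{lem-rev-monotonicity}. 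Your route is more economical in that it avoids the exterior-domain comparison of Lemma~\ref{lem-strict-comp-prin} and its $L^1$-hypothesis \eqref{u1-u2-l1-norm-bd}. On the other hand, the paper's route produces the intermediate strict inequality \eqref{u-tilde-u-lambda-compare6}, which is later reused verbatim (see \eqref{eq-tilde-sol-v-contra}) when analysing the limit $\tilde u_\infty$ in the proof of Theorem~\ref{thm-uniform-convergence-tilde-u-0}; if you follow your approach throughout, you would need to supply that step by a separate argument.
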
 
\noindent{\bf Proof of Claim \ref{cla-2-mono}}:
We fix $ \tau_1>0$ and let $t_1:=T(1-e^{-\tau_1})$. By \eqref{u-tilde-f-compare5} with $\tau=\tau_1$,   we have
\begin{equation}\label{u<U-lambda10}
u(x,t_1)\le U_{\lambda(\tau_1)}(x,t_1)\qquad\forall x\in\R^n.
\end{equation}
Then arguing similarly as for \eqref{u-u-lambda-ineqn2} with the use of  \eqref{u<U-lambda10}, we have
\begin{align}
&u(x,t)\le U_{\lambda(\tau_1)}(x,t)\qquad\forall x\in\R^n,\,\, t_1<t<T\label{eq-compare-u-U}\\
\Rightarrow\quad&\tilde{u}(y,\tau)\le f_{\lambda(\tau_1)}(y)\quad\qquad\forall y\in\R^n,\,\, \tau>\tau_1\notag\\
\Rightarrow\quad&\lambda(\tau)\le\lambda(\tau_1)\qquad\forall \tau>\tau_1.\notag
\end{align}
Hence $\lambda(\tau)$ is a decreasing function of $\tau>0$.

By \eqref{eq-est-upper-bd-ld=tau} and \eqref{eq-est-behavior-at-infty} with $\tau=\tau_1$, 
\begin{equation}\label{u--u-lambda-compare}
u(x,t_1)< U_{\lambda(\tau_1)}(x,t_1)\qquad  \forall |x|\geq R_*/[(Te^{-\tau_1})^{\beta}\lambda(\tau_1)] .
\end{equation} 
Then by the continuity of $u$ and $ U_{\lambda(\tau_1)}$ and \eqref{u--u-lambda-compare}, there exist constants $R_1> R_*/[(Te^{-\tau_1})^{\beta}\lambda(\tau_1)]$ and $t_2\in (t_1,T)$   such that  
\begin{equation}\label{u--u-lambda-compare2}
u(x,t)< U_{\lambda(\tau_1)}(x,t)\qquad \forall |x|=R_1,\,\, t_1\le t\le t_2. 
\end{equation}  
Since by \eqref{eq-1st-asymp-ellip},
$$
U_{\lambda(\tau_1)}(x,t)\geq  c_0\min\left(1, |x|^{-\frac{2}{1-m}}\right)\qquad \forall (x,t)\in\R^n\times [t_1,t_2]
$$   
for some constant $c_0 >0$, using {\eqref{eq-compare-u-U}, \eqref{u--u-lambda-compare},  \eqref{u--u-lambda-compare2},}  and the strong comparison principle   for an exterior   domain  in   Lemma \ref{lem-strict-comp-prin} implies that
\begin{equation}\label{u--u-lambda-compare3}
u(x,t)< U_{\lambda(\tau_1)}(x,t)\qquad \forall |x|\ge R_1,\,\, t_1< t<t_2.
\end{equation} 
On the other hand, by \eqref{u<U-lambda10}, \eqref{u--u-lambda-compare2} and the strong comparison principle  for a bounded domain in Lemma \ref{lem-strict-comp-prin2}, it follows that
\begin{equation}\label{u--u-lambda-compare5}
u(x,t)< U_{\lambda(\tau_1)}(x,t)\qquad \forall |x|< R_1,\,\, t_1<t<t_2.
\end{equation} 
Let  $\tau_2=-\log \{(T-t_2)/T\}$.  Then by  \eqref{u--u-lambda-compare3} and \eqref{u--u-lambda-compare5},
\begin{equation}\label{u-tilde-u-lambda-compare6}
\tilde{u}(y,\tau)< f_{\lambda(\tau_1)}(y)\qquad \forall y\in\R^n,\,\, \tau_1<\tau<\tau_2.
\end{equation} 
Since  $\lambda (\tau)\leq \lambda (\tau_1)$ for any $\tau> \tau_1$, 
employing  \eqref{u-tilde-u-lambda-compare6} and  Claim \ref{cla-2-touching} yields  that  $\lambda (\tau)<\lambda (\tau_1)$ for any $\tau\in(\tau_1,\tau_2)$. Hence $\lambda(\tau)$ is a strictly decreasing function of $\tau>0$ finishing the proof of Claim \ref{cla-2-mono}.
  \medskip

\noindent By \eqref{eq-est-upper-bd-ld=tau} and Claim \ref{cla-2-mono},    the limit 
\begin{equation*}
\lambda_\infty:=\lim_{\tau\to\infty}\lambda(\tau)\,\in\, [0,\lambda_1)
\end{equation*}
exists. Then by  \eqref{u-tilde-f-compare5},  we have 
\begin{equation}\label{eq-est-tilde-v-f-infty}  
\tilde u_{\infty}(y,\tau )= \lim_{i\to\infty}\tilde u(y,\tau_i+\tau)\leq  \lim_{i\to\infty} f_{\lambda(\tau_i+\tau)}(y)=f_{\lambda_\infty}(y)\qquad\forall (y,\tau)\in\R^n\times\R.
\end{equation}
Moreover, it follows from 
  \eqref{tilde-u-infty-upper-bd0} and \eqref{eq-est-tilde-v-f-infty} that 
\begin{equation} \label{eq-tilde-sol-upper-bd-ini-v}
 0\leq  \tilde u_{\infty}  \leq\min\big(f_{\lambda_\infty},f_{\lambda_1}\big)\quad\mbox{ in } \,\,\R^n\times \R.
\end{equation}

\begin{Claim}\label{u-tilde-infty=0}
$\tilde u_{\infty}\equiv0\,$ in $\R^n\times\R.$
\end{Claim}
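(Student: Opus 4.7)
The plan is to derive a contradiction from the assumption $\lambda_\infty>0$.

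\textbf{First}, I would promote the touching property of Claim~\ref{cla-2-touching} to the limit. Applying Claim~\ref{cla-2-touching} at $\tau=\tau_i$ yields touching points $y_i\in\overline{B_{2R_*/\lambda(\tau_i)}}$ with $\tilde u(y_i,\tau_i)=f_{\lambda(\tau_i)}(y_i)$. Since Claim~\ref{cla-2-mono} gives $\lambda(\tau_i)\geq\lambda_\infty>0$, the sequence $\{y_i\}$ lies in the fixed ball $\overline{B_{2R_*/\lambda_\infty}}$. After extracting a subsequence, $y_i\to y_*$, and the local uniform convergence $\tilde u_i\to\tilde u_\infty$, combined with the joint continuity of $(\lambda,y)\mapsto f_\lambda(y)$ furnished by the scaling relation \eqref{f-lambda-f1-eqn}, yields the interior touching
\begin{equation*}
\tilde u_\infty(y_*,0)=f_{\lambda_\infty}(y_*)>0.
\end{equation*}

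\textbf{Second}, I would use the strong parabolic maximum principle to propagate this touching. Since $\tilde u_\infty\leq f_{\lambda_\infty}$ throughout $\R^n\times\R$ by \eqref{eq-est-tilde-v-f-infty}, the nonnegative difference $w:=f_{\lambda_\infty}-\tilde u_\infty$ satisfies a linear parabolic equation obtained by linearizing \eqref{eq-fde-rescaled}, with diffusion coefficient $\tilde a$ lying between $mf_{\lambda_\infty}^{m-1}$ and $m\tilde u_\infty^{m-1}$. Near $(y_*,0)$, continuity and positivity of $f_{\lambda_\infty}$ ensure $\tilde u_\infty$ is uniformly bounded below by a positive constant, so the equation is uniformly parabolic on a small space-time cylinder $Q$. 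The strong maximum principle then forces $w\equiv 0$ throughout the backward parabolic set of $(y_*,0)$ in $Q$; exploiting that $f_{\lambda_\infty}>0$ everywhere on $\R^n$, a standard continuation argument extends $w\equiv 0$ to all of $\R^n\times(-\infty,0]$, and forward uniqueness for \eqref{eq-fde-rescaled} then gives $\tilde u_\infty\equiv f_{\lambda_\infty}$ in $\R^n\times\R$.

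\textbf{Third}, the contradiction comes from the second upper bound. By \eqref{eq-tilde-sol-upper-bd-ini-v}, $\tilde u_\infty\leq f_{\lambda_1}$, and the previous step gives $f_{\lambda_\infty}\leq f_{\lambda_1}$ pointwise on $\R^n$. Since $0<\lambda_\infty<\lambda_1$, applying the reverse monotonicity of Lemma~\ref{lem-rev-monotonicity} produces $f_{\lambda_\infty}(y)>f_{\lambda_1}(y)$ for any $|y|\geq R_*/\lambda_\infty$, a contradiction. Hence $\lambda_\infty=0$, and then by Theorem~\ref{thm-cigar-soliton-existence} the bound $f_{\lambda(\tau)}(y)\leq\lambda(\tau)^{2/(1-m)}\to 0$ passes to the limit in $\tilde u\leq f_{\lambda(\tau)}$ to yield $\tilde u_\infty\equiv 0$ in $\R^n\times\R$.

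\textbf{Main obstacle.} The technical heart of the argument is the second step: since \eqref{eq-fde-rescaled} is degenerate parabolic, upgrading the single interior touching at $(y_*,0)$ to the global identity $\tilde u_\infty\equiv f_{\lambda_\infty}$ is delicate. One must verify the propagation of the equality across all of $\R^n\times(-\infty,0]$, relying crucially on the strict positivity of $f_{\lambda_\infty}$ everywhere (so that on any compact set the equation is uniformly parabolic once $w=0$ has been established in a neighborhood) before invoking forward uniqueness for the Cauchy problem to reach positive times.
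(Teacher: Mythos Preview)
Your approach is correct and constitutes a genuinely different route from the paper's proof, though the technical core of Step~2 deserves a clarifying remark.

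The paper argues by contradiction assuming $\tilde u_\infty\not\equiv 0$; after invoking \cite[Lemma~3.3]{HuiK} to obtain $\tilde u_\infty>0$ on $\R^n\times(-\delta,\delta)$ and upgrading regularity via Schauder estimates, it \emph{re-runs} the machinery of Claims~\ref{cla-2-touching}--\ref{cla-2-mono} for the eternal solution $\tilde u_\infty$ itself: defining $\mu(\tau)$ analogously to $\lambda(\tau)$, one shows $\mu$ is strictly decreasing with $\mu(0)\le\lambda_\infty$, produces a strict inequality $\tilde u_\infty(\cdot,s_2)<f_{\mu(s_1)}$, and transfers it back to the approximants $\tilde u_i$ by uniform convergence on the compact ball $\overline{B_{R_*/\mu(s_1)}}$, forcing $\lambda(\tau_i+s_2)\le\mu(s_1)<\lambda_\infty$ --- a contradiction with $\lambda_\infty<\lambda(\tau)$.

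Your argument instead passes the touching points of Claim~\ref{cla-2-touching} directly to the limit, obtains an interior contact $\tilde u_\infty(y_*,0)=f_{\lambda_\infty}(y_*)$, and uses the strong maximum principle to upgrade this to $\tilde u_\infty\equiv f_{\lambda_\infty}$, which contradicts $\tilde u_\infty\le f_{\lambda_1}$ via Lemma~\ref{lem-rev-monotonicity}. This is more direct and avoids the second pass through the touching/monotonicity argument. Two observations sharpen it: (i) you only need $\tilde u_\infty(\cdot,\tau_0)=f_{\lambda_\infty}$ at a \emph{single} time slice for Step~3, so the extension to all of $\R^n\times\R$ and the appeal to forward uniqueness are superfluous; (ii) the continuation in Step~2 is cleanest as an open--closed argument on the set $A=\{y:\tilde u_\infty(y,\tau_0)=f_{\lambda_\infty}(y)\}$ at a fixed time $\tau_0<0$ close to $0$. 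At any $y_0\in A$ one has $\tilde u_\infty(y_0,\tau_0)=f_{\lambda_\infty}(y_0)>0$, so continuity yields local positivity, Schauder estimates on the $\tilde u_i$ give $C^{2,1}$ regularity on a small cylinder, and the strong maximum principle on $B_\epsilon(y_0)\times(\tau_0-\epsilon,\tau_0]$ forces $w=0$ in a spatial neighborhood of $y_0$. This dissolves the apparent circularity you flag in the ``Main obstacle'' paragraph and in fact bypasses any need for a global positivity lemma such as \cite[Lemma~3.3]{HuiK}.
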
 

Note that once we have proved Claim \ref{u-tilde-infty=0},      $\tilde  u(\cdot,\tau_i)$ converges to zero uniformly on any compact subset of $\R^n$ as $i\to\infty$.  Since the sequence $\{\tau_i\}^\infty_{i=1}$ is arbitrary,  we can then conclude that $\tilde u(\cdot,\tau)$ converges to zero uniformly on any compact subset of  $\R^n$ as $\tau\to\infty$, completing the proof of Theorem \ref{thm-uniform-convergence-tilde-u-0}.  So it remains  to prove  Claim \ref{u-tilde-infty=0}. 

\medskip
 
\noindent{\bf Proof of Claim \ref{u-tilde-infty=0}}: 
Suppose to the contrary that  Claim \ref{u-tilde-infty=0} does not hold. Then without loss of generality we may assume  that $\tilde u_{\infty}(0,0)>0$.   By the continuity of $\tilde u_{\infty}$,  there exists  a constant $\delta>0$ such that
\begin{equation}\label{u-tilde-loc-positive}
\tilde u_{\infty}(y,\tau)>0\qquad\forall |y|<\delta,\,\, |\tau|<\delta.
\end{equation}
We now define  
\begin{equation}\label{w-defn}
w(x, t):=(T-t)^{\alpha}\,\tilde u_\infty\left((T-t)^{\beta}x,\tau\right) \qquad\hbox{in \,\,$\R^n\times(-\infty,T)$}
\end{equation}
 with $\tau=-\log \{(T-t)/T \}$. 
Since $\tilde u_{\infty}\in C(\R^n\times\R)$ solves \eqref{eq-fde-rescaled} in $\sD'(\R^n\times\R)$, the nonnegative function $ w\in C\left(\R^n\times(-\infty,T)\right)$ satisfies \eqref{fast-diff-eqn} in $\sD'\left(\R^n\times(-\infty,T)\right)$. Thus by \eqref{u-tilde-loc-positive} and \eqref{w-defn} we can apply    \cite[Lemma 3.3]{HuiK} to  $w$ in order to conclude that
\begin{equation}\label{u-tilde-infty-positive10}
\tilde u_{\infty} >0\qquad \hbox{in \,\, $  \R^n\times (-\delta,\delta) .$}
\end{equation} 
By \eqref{eq-tilde-sol-upper-bd-ini-v}, it follows that 
\begin{equation*}
0< \tilde u_{\infty}(0,0)\leq f_{\lambda_{\infty}}(0)=\lambda_{\infty}^{\frac{2}{1-m}}\quad\Rightarrow\quad\lambda_{\infty}>0.
\end{equation*}

Let $K$ be any  compact subset of $\R^n\times (-\delta,\delta)$ and  $\displaystyle \nu_K=\min_K\tilde u_{\infty}$. Then by \eqref{u-tilde-infty-positive10},  $\nu_K>0$. 
Since $\tilde u_i$ converges to $\tilde u_{\infty}$  uniformly on $K$ as $i\to\infty$, there exists $i_K\in\Z^+$ such that
\begin{equation}\label{tilde-ui-uniform-lower-bd}
 \min_K \tilde u_i >\nu_K/2>0\qquad\forall i\geq i_K.
\end{equation}
By \eqref{eq-tilde-sol-i-upper-bd} and \eqref{tilde-ui-uniform-lower-bd}, the equation \eqref{eq-fde-rescaled} for the sequence $\{\tilde u_i\}_{i\ge i_K}$ is uniformly parabolic on $K$. Then
by the   parabolic Schauder estimates \cite{LSU}, the sequence $\{\tilde u_i\}_{i\ge i_K}$ is equi-H\"older continuous  in $C^{2,1}(K_1)$ for any compact subset $K_1\Subset K$. Since      compact subsets $K_1\Subset K$ of $\R^n\times (-\delta,\delta)$ are arbitrary,  by the Ascoli Theorem and a diagonalization argument,  there exists  a subsequence of the sequence $\{\tilde u_i\}_{ i=1}^\infty$, still denoted by $\{\tilde u_i\}_{i=1}^\infty$ such that $\tilde u_i$ converges to $\tilde u_{\infty}$ uniformly in $C^{2,1}(K)$ as $i\to\infty$ for any compact set   $K$ of $\R^n\times (-\delta,\delta)$. Hence, $\tilde u_{\infty}\in C^{2,1}(\R^n\times (-\delta,\delta))$ is a classical positive solution to \eqref{eq-fde-rescaled}  in $\R^n\times (-\delta,\delta)$. 
  
Now we define 
$$
\mu(\tau):= \inf \big\{\, \mu>0:   \tilde u_{\infty}(y,\tau)\le f_{\mu}(y)\quad \forall y\in\R^n\big\}\qquad\forall\tau\in (-\delta,\delta).
$$ 
By \eqref{eq-tilde-sol-upper-bd-ini-v}, \eqref{u-tilde-infty-positive10} and using the same argument as before, we  deduce that 
\begin{equation}\label{eq-mu-ld1}
0< \mu(\tau')<\mu(\tau)<\mu(0)\leq \lambda_\infty<\lambda_1\qquad\forall 0< \tau<\tau'<\delta.
\end{equation}
We fix $s_1\in(0,\delta)$.   Arguing similarly as for  \eqref{u-tilde-u-lambda-compare6},  there exists a constant $s_2\in (s_1,\delta)$ such that 
 \begin{equation} \label{eq-tilde-sol-v-contra}
\tilde u_{\infty}(\cdot,s_2)<f_{\mu(s_1)}\qquad\hbox{in\,\, $  \R^n$}.
\end{equation}
 Let    $R_*>0$ be the constant appearing in \eqref{eq-est-behavior-at-infty}.  
Since  $\tilde u_i(\cdot,  s_2)$ converges   to  $\tilde u_{\infty}(\cdot, s_2)$  uniformly  on $\overline{B_{R_*/\mu(s_1)}}$ as $i\to\infty$, by  \eqref{eq-tilde-sol-v-contra} there exists  $i_0\in\Z^+$ such that
$$
\tilde u\left(y,\tau_i+s_2\right)=\tilde u_i(y,s_2)<f_{\mu(s_1)}(y)\qquad\forall y\in\overline{B_{R_*/ \mu(s_1)}},\,\, i\ge i_0.
$$  
This together with \eqref{eq-est-behavior-at-infty} and \eqref{eq-mu-ld1} implies that  
\begin{equation}\label{tilde-ui<f-lambda2}
\tilde  u(y,\tau_i+ s_2)<f_{\mu(s_1)}(y)\qquad \forall y\in\R^n,\,\,i\ge i_0.
\end{equation}
By Claim \ref{cla-2-mono} and \eqref{tilde-ui<f-lambda2} we deduce that  
 $$
 \lambda_\infty < \lambda(\tau_i+s_2)\leq   \mu(s_1)\qquad \forall i\ge i_0 ,
 $$ 
which  contradicts \eqref{eq-mu-ld1}, and therefore Claim \ref{u-tilde-infty=0} follows. This  finishes the proof of Theorem \ref{thm-uniform-convergence-tilde-u-0}.
\end{proof}


\section*{Acknowledgment}
 The second  author  is   supported by National Research Foundation of Korea (NRF) Grant No. NRF-2018R1C1B6003051.  


\appendix
\section{Strong comparison principle}
In this section, we will establish   the comparison principle for the fast diffusion equation  in   exterior domains and in    bounded domains. Firstly, the following lemma  deals with 
   the comparison principle   on   exterior domains in the range $0<m<\frac{n-2}{n}$, $n\ge 3$. We refer to Lemma 3.4  of \cite{HP} for the supercritical case when $\frac{(n-2)_+}{n}<m<1$.

\begin{lemma}[Strong comparison principle in exterior domains]
\label{lem-strict-comp-prin}
Let $n\ge 3$, $0<m<\frac{n-2}{n}$, $ R_1>0$ and $T_1>0$.  Let  $u_1$,   $u_2\in  C^{2,1}\big((\R^n\setminus\overline{B_{R_1}})\times (0,T_1]\big)\cap C\big((\R^n\setminus B_{R_1})\times [0,T_1]\big)$  be    positive   solutions  to
$$u_t=\La u^m \qq\hbox{in \,\,$(\R^n\setminus\overline{B_{R_1}})\times (0,T_1]$  } $$ 
 such that   \begin{equation} \label{eq-comp-bdry}
\left\{\,
\begin{aligned}
  u_{1}(x,0)\le u_{2}(x,0) \qquad&\forall     |x| \geq  R_1  ;\\
u_1(x,t)<u_2(x,t)\qquad&\forall |x|=R_1,\,\, 0<t\leq  T_1,
\end{aligned}\right.
\end{equation}
 and 
\begin{equation}\label{eq-est-lower-bd}
\min(u_1(x,t),u_2(x,t))\geq\, c_0\, |x|^{-\frac{2}{1-m}}\qquad\forall |x|\geq R_1,\,\, 0\leq t\leq T_1
\end{equation}
  for some constant $c_0>0$. In addition, we assume  that 
\begin{equation}\label{u1-u2-l1-norm-bd}
(u_1-u_2)_+\in L^{1}\left((0,T_1);L^1(\R^n\setminus \overline{B_{R_1}})\right).
\end{equation}
Then 
\begin{equation}\label{u1<u2-exterior-domain}
u_1(x,t)<u_2(x,t)\qquad\forall |x|>R_1,\,\, 0<t\leq  T_1.
\end{equation}
\end{lemma}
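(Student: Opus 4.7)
The plan is to first establish the weak inequality $u_1\le u_2$ on $\Omega:=\R^n\setminus\overline{B_{R_1}}$ by an $L^1$-type argument based on Kato's inequality, and then upgrade to strict inequality via the classical strong maximum principle for uniformly parabolic linear equations.

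Setting $w:=(u_1-u_2)_+$ and $g:=(u_1^m-u_2^m)_+$, subtracting the equations for $u_1$ and $u_2$ and applying Kato's inequality yields $w_t\le\Delta g$ in $\sD'(\Omega\times(0,T_1])$. On the set $\{u_1>u_2\}$ the mean value theorem gives $u_1^m-u_2^m=m\xi^{m-1}(u_1-u_2)$ with $\min(u_1,u_2)\le\xi\le\max(u_1,u_2)$; combining this with the lower bound \eqref{eq-est-lower-bd} and the fact that $m-1<0$ produces the pointwise estimate
\begin{equation*}
0\le g(x,t)\le C|x|^2\,w(x,t)\qquad\hbox{in }\Omega\times[0,T_1],
\end{equation*}
for a constant $C=C(c_0,m)$.

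Next I would choose a smooth cutoff $\eta\in C_c^\infty(\R^n)$ with $\eta\equiv 1$ on $B_1$ and supported in $B_2$, and set $\eta_R(x):=\eta(x/R)$ for $R\gg R_1$. Testing $w_t\le\Delta g$ against $\eta_R$ and integrating by parts over $\Omega$, I would use that \eqref{eq-comp-bdry} forces $g\equiv 0$ on $\{|x|=R_1\}$, so $g$ attains its minimum value there and the boundary contribution $\int_{|x|=R_1}\partial_\nu g\,dS$ is nonpositive. Combined with $w(\cdot,0)\equiv 0$ a.e.\ and $|\Delta\eta_R|\le CR^{-2}$ on the annulus $B_{2R}\setminus B_R$, I obtain
\begin{equation*}
\int_\Omega w(x,t)\,\eta_R(x)\,dx\le C\int_0^t\int_{B_{2R}\setminus B_R}w(x,s)\,dx\,ds,
\end{equation*}
since the factor $|x|^2\le 4R^2$ in the upper bound for $g$ exactly absorbs the $R^{-2}$ decay of $\Delta\eta_R$. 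Sending $R\to\infty$ and invoking the integrability hypothesis \eqref{u1-u2-l1-norm-bd} forces the right-hand side to zero, so $w\equiv 0$, proving $u_1\le u_2$ on $\Omega\times(0,T_1]$.

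With the weak inequality in hand, set $v:=u_2-u_1\ge 0$ and write $u_2^m-u_1^m=\tilde a\,v$ with $\tilde a:=m\int_0^1(\theta u_2+(1-\theta)u_1)^{m-1}d\theta>0$, so that $v$ satisfies the linear equation $v_t=\Delta(\tilde a\,v)$, which is uniformly parabolic on every compact subset of $\Omega\times(0,T_1]$ because $u_1,u_2>0$ are classical. If $v(x_0,t_0)=0$ for some $|x_0|>R_1$ and $0<t_0\le T_1$, the strong maximum principle forces $v\equiv 0$ on a connected region reaching the lateral boundary $\{|x|=R_1\}\times(0,t_0]$, contradicting the strict inequality there in \eqref{eq-comp-bdry}. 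This yields \eqref{u1<u2-exterior-domain}. The main technical obstacle is the rigorous justification of the favorable sign of the boundary contribution at $\{|x|=R_1\}$ in the Kato step, since $w$ and $g$ are only Lipschitz; a standard workaround is to test instead against $\eta_R\zeta_\varepsilon$, where $\zeta_\varepsilon$ vanishes in a small neighborhood of $\{|x|=R_1\}$, and then exploit the uniform strict gap $u_2-u_1\ge\delta>0$ on $\{|x|=R_1\}\times[0,T_1]$ (guaranteed by \eqref{eq-comp-bdry} together with continuity and compactness) to control the error introduced by $\zeta_\varepsilon$ as $\varepsilon\downarrow 0$.
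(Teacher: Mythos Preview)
Your approach is essentially the same as the paper's: Kato's inequality with a spatial cutoff $\eta_R$ to obtain $u_1\le u_2$, then the strong maximum principle applied to the linear equation $v_t=\Delta(\tilde a v)$ on bounded annuli to upgrade to strict inequality. The paper's proof is nearly identical line-for-line.

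One remark on the boundary issue you flag: your workaround with an auxiliary cutoff $\zeta_\varepsilon$ is correct but heavier than necessary. Since $u_1<u_2$ \emph{strictly} on $\{|x|=R_1\}\times(0,T_1]$ and both functions are continuous up to the boundary, for each fixed $t>0$ there is a collar $\{R_1\le|x|\le R_1+\delta(t)\}$ on which $u_1<u_2$, hence $w=g\equiv 0$ there. Thus $\eta_R$ may be freely modified to vanish near $\{|x|=R_1\}$ without altering any integral, and the boundary terms disappear trivially---no need to estimate $\partial_\nu g$. The paper invokes the boundary hypothesis together with Kato's inequality in one breath, presumably with this observation in mind.
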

\begin{proof}
We will first  show that 
\begin{equation}\label{u1<=u2-exterior-domain}
u_1(x,t)\le u_2(x,t)\qquad \forall |x|>R_1,\,\, 0<t\leq  T_1.
\end{equation}
For any $R>R_1$, let $\eta$ and $\eta_R$ be as in the proof of Proposition \ref{prop-strong-contraction}. By \eqref{eq-comp-bdry} and the Kato inequality (\cite{K,DK}),  for any $R>R_1$ and $0<t<T_1$, we have
\begin{align}\label{eq-Kato-comp}
\frac{d}{dt} \int_{|x|\geq R_1} \left(u_1-u_2\right)_+ \eta_R dx
&\leq  \int_{|x|\geq R_1} \left(u_1^m-u_2^m\right)_+ \La \eta_R  dx\notag\\
& \leq   {C}{R^{-2}}  \int_{R\leq |x| \leq 2R} \left(u_1^m-u_2^m\right)_+ dx\notag \\
&\leq {C}{R^{-2}}  \int_{R\leq |x| \leq 2R} a(x,t)\left(u_1-u_2\right)_+  dx,
\end{align} 
where
\begin{equation}\label{eq-def-a-app}
a(x,t)= \int_{0}^1\frac{ m\, ds }{(s u_1(x,t) +(1-s) u_2(x,t))^{1-m}}\qquad\forall (x,t) \in (\R^n\setminus\overline{B_{R_1}})\times (0,T_1].
\end{equation}
Note that by \eqref{eq-est-lower-bd}, 
\begin{equation}\label{eq-est-a}
0<\,a(x,t)\,\leq\,  C_0 |x|^2 \qquad\forall (x,t) \in (\R^n\setminus\overline{B_{R_1}})\times (0,T_1]
\end{equation} 
for some constant $C_0>0$. Hence by \eqref{eq-Kato-comp} and \eqref{eq-est-a}, it follows that 
$$
\frac{d}{dt} \int_{|x|\geq R_1}(u_1-u_2)_+\eta_R\,dx
\leq C\int_{R \leq |x|\leq 2R}(u_1-u_2)_+\,dx\quad\forall R>R_1,0<t<T_1.
$$
This together \eqref{eq-comp-bdry} implies that 
\begin{equation}\label{u1-u2-l1-ineqn2}
\int_{|x|\geq R_1}(u_1-u_2)_+ \eta_R (x,t)\,dx
\le C\int_0^t  \int_{R \leq |x|\leq 2R}(u_1-u_2)_+(x,s)\,dx\,ds\quad\forall R>R_1,0<t<T_1.
\end{equation}
Letting  $R\to\infty$ in \eqref{u1-u2-l1-ineqn2}, by   \eqref{u1-u2-l1-norm-bd} and the dominated convergence theorem,  we deduce 
\begin{equation*}
 \int_{|x|\geq R_1} \left(u_1-u_2\right)_+ (x,t ) dx=0 \qquad \forall 0<t< T_1
\end{equation*}
and \eqref{u1<=u2-exterior-domain} follows. 

We now let $w=u_2-u_1$ and observe that the nonnegative function $w$  satisfies 
\begin{align}\label{w-eqn10}
&w_t= \La \big(a\,  w\big) \qquad \qquad \qquad  \qquad \qquad \hbox{ in\,\, $(\R^n\setminus\overline{B_{R_1}})\times (0,T_1]$}\notag\\
\Rightarrow\quad&w_t-a\La w-2\D a\cdot\D w +|\La a|w\ge 0 \quad \hbox{in\,\, $(\R^n\setminus\overline{B_{R_1}})\times (0,T_1]$},
\end{align}
where $a=a(x,t)$ is given by \eqref{eq-def-a-app}. Since for any $R>R_1$,  the   function $a(x,t)$  are continuous on $( \overline{B_{R}}\setminus  {B_{R_1}} )\times [0,T_1]$,  there exists a constant $C_R>0$ such that
\begin{equation}\label{a-lower-bd}
a(x,t)\ge C_R\qquad\forall x\in \overline{B_{R}}\setminus  {B_{R_1}},\,\,0\le t\le T_1.
\end{equation}
Hence by \eqref{eq-est-a} and \eqref{a-lower-bd},  the equation \eqref{w-eqn10}
 for $w$ is uniformly parabolic in $( B_{R}\setminus \overline{B_{R_1}})\times(0, T_1]$ for any $R>R_1$. Therefore by \eqref{eq-comp-bdry} and   the strong maximum principle, we get  that 
\begin{equation*}
w=u_2-u_1>0\qquad\mbox{ in  }(B_R\setminus \overline{B_{R_1}}) \times  (0, T_1].
\end{equation*}
Since $R>R_1$ is arbitrary,  \eqref{u1<u2-exterior-domain} follows.
\end{proof}
 
The following lemma is concerned with  the comparison principle in   bounded domains, and can be proved   by an argument similar to the proof of Lemma 2.3 of \cite{DaK}   and the proof of Lemma \ref{lem-strict-comp-prin}. Hence  we omit its proof. 

\begin{lemma}[Strong comparison principle in bounded domains]\label{lem-strict-comp-prin2}
Let $n\ge 3$, $0<m<\frac{n-2}{n}$, $ R_1>0$ and $T_1>0$.  Let  $u_1$,   $u_2\in  C^{2,1}\big( B_{R_1}\times (0,T_1]\big)\cap C\big(\overline B_{R_1}\times [0,T_1]\big)$  be      solutions  to 
$$u_t=\La u^m \qq\hbox{in\,\, $ {B_{R_1}}\times (0,T_1]$  } $$
  such that   $u_1$,   $u_2$ are positive in $\overline{B_{R_1}}\times [0,T_1]$, and 
\begin{equation*}
\left\{\begin{aligned}
 u_1(x,0)\le u_2(x,0)\qquad&\forall |x|\le R_1;\\
 u_1(x,t)<u_2(x,t)\qquad&\forall |x|=R_1,\,\, 0<  t\leq  T_1.
\end{aligned}\right.
\end{equation*}
Then we have 
\begin{equation*}
u_1(x,t)<u_2(x,t)\qq \forall |x|<R_1,\,\, 0<  t\leq  T_1.
\end{equation*}
\end{lemma}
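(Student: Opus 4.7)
The plan is to follow the two-stage strategy used in the proof of Lemma \ref{lem-strict-comp-prin}, substantially simplified by the boundedness of the domain and the uniform positivity assumption. Since $u_1, u_2\in C(\overline{B_{R_1}}\times[0,T_1])$ are strictly positive on this compact cylinder, by continuity there exist constants $0<c_1\le c_2<\infty$ with $c_1\le u_j(x,t)\le c_2$ for $j=1,2$; this removes the degeneracy that makes the exterior case delicate and effectively turns the problem into a uniformly parabolic one.

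First I would establish the weak inequality $u_1\le u_2$. Setting $w:=u_2-u_1$ and
$$
a(x,t):=m\int_0^1\bigl(su_2(x,t)+(1-s)u_1(x,t)\bigr)^{m-1}\,ds,
$$
the difference $w$ satisfies $w_t=\La(aw)$ classically in $B_{R_1}\times(0,T_1]$, with $c_3\le a\le c_4$ for some constants $0<c_3\le c_4<\infty$. Applying the Kato-type computation exactly as in \eqref{eq-Kato-comp} (now with no cutoff needed, since $B_{R_1}$ is bounded and $w\le 0$ on its parabolic boundary), the boundary terms vanish and one obtains $\frac{d}{dt}\int_{B_{R_1}}(u_1-u_2)_+\,dx\le 0$, whence $(u_1-u_2)_+\equiv 0$ throughout.

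Next, to upgrade the weak inequality to the strict one, I would expand $\La(aw)$ and rewrite the equation for $w\ge 0$ in non-divergence form
$$
w_t-a\,\La w-2\D a\cdot\D w-(\La a)\,w=0\qquad\hbox{in }B_{R_1}\times(0,T_1],
$$
which is uniformly parabolic, with bounded lower-order coefficients inherited from the interior $C^{2,1}$-regularity of $u_1$ and $u_2$ together with the uniform lower bound on $u_j$. Since $w\ge 0$ and the strict inequality $w>0$ holds on the lateral boundary $|x|=R_1$, the classical strong maximum principle forces $w>0$ throughout $B_{R_1}\times(0,T_1]$; if instead $w$ vanished at an interior point, it would have to vanish identically on a connected parabolic component reaching the lateral boundary, contradicting the strict positivity there by continuity of $w$ on $\overline{B_{R_1}}\times[0,T_1]$.

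The principal technical point—not really an obstacle—is verifying that the coefficient $\La a$ in the non-divergence form enjoys enough regularity (essentially boundedness on compact subsets) for the strong maximum principle to apply, which follows from the interior $C^{2,1}$-regularity of $u_j$ combined with the uniform positive lower bound $u_j\ge c_1$ that prevents $a^{m-1}$-type blow-up. Apart from this regularity check, the argument is genuinely lighter than the exterior-domain case of Lemma \ref{lem-strict-comp-prin}: no $L^1$-integrability hypothesis and no $R\to\infty$ truncation limit are needed, which is why the authors felt comfortable omitting the details.
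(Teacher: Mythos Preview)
Your proposal is correct and follows precisely the approach the paper indicates: it adapts the two-stage argument of Lemma~\ref{lem-strict-comp-prin} (Kato-type $L^1$ contraction for the weak inequality, then the strong maximum principle for the strict one), with the simplifications afforded by compactness and the uniform positivity bound $0<c_1\le u_j\le c_2$ that render the problem uniformly parabolic. This is exactly what the authors have in mind when they cite Lemma~2.3 of \cite{DaK} together with Lemma~\ref{lem-strict-comp-prin} and omit the details.
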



\end{document}